\numberwithin{equation}{section}
\def\d{{\rm d}}
\def\al{\alpha}
\def\be{\beta}
\def\Der{{\rm Der}}
\def\Inn{{\rm Inn}}
\def\W{\mathcal{W}}
\def\Z{\mathbb{Z}}
\def\C{\mathbb{C}}
\def\ot{\otimes}
\def\de{\delta}
\def\bN{{\mathbb N}}
\def\bZ{{\mathbb Z}}
\def\bC{{\mathbb C}}
\numberwithin{equation}{section}
\newtheorem{theo}{Theorem}[section]
\newtheorem{defi}[theo]{Definition}
\newtheorem{coro}[theo]{Corollary}
\newtheorem{lemm}[theo]{Lemma}
\newtheorem{prop}[theo]{Proposition}
\newtheorem{rema}[theo]{Remark}
\def\Der{\mbox{\rm Der}}
\def\Inn{\mbox{\rm Inn}}
\title[1-cocycles of the Witt algebra]{1-cocycles of the Witt algebra with coefficients in tensor product of modules}
\author[Gao]{Shoulan Gao}
\address{Department of Mathematics, Huzhou University, Zhejiang Huzhou
313000, PR China}\email{gaoshoulan@zjhu.edu.cn}
\author[Liu]{Dong Liu}
\address{Department of Mathematics, Huzhou University, Zhejiang Huzhou
313000, PR China}
\email{liudong@zjhu.edu.cn}
\author[Pei]{Yufeng Pei$^*$}
\address{Department of Mathematics, Huzhou University, Zhejiang Huzhou
313000, PR China}\email{pei@zjhu.edu.cn}
\date{}
\subjclass[2020]{17B05, 17B62, 17B65, 17B68}
\keywords{Witt algebra, 1-cocycles, Lie bialgebra}
\thanks{$*$: Corresponding author.}
\begin{document}
\maketitle

\begin{abstract}
In this paper, we classify 1-cocycles of the Witt algebra with coefficients in the tensor product of two arbitrary tensor density modules.
In a special case, we recover a theorem originally established by Ng and Taft in \cite{NT}. Furthermore, by  these 1-cocycles, we determine Lie bialgebra structures over certain infinite-dimensional Lie algebras containing the Witt algebra.

\end{abstract}


\setcounter{section}{0}

\allowdisplaybreaks

\section{Introduction}

The Witt algebra $\mathcal{W}$ is an infinite-dimensional Lie algebra that emerges as the algebra of derivations acting on the Laurent polynomial ring in a single variable. The Virasoro algebra, which is the universal central extension of the Witt algebra, holds significant importance in the realms of both mathematics and theoretical physics.   The cohomology of the Witt algebra and the Virasoro algebra has been an active area of research owing to its deep relations with representation theory \cite{FF,IK,MS,MP}. In particular, the algebraic cohomology of the Witt algebra, when considering coefficients related to the tensor density modules $\mathcal{F}_{\alpha}$, has been the subject of study in various works  \cite{E,ES,ES1,Fi,GR,OR2,Sc}. In \cite{FF}, Feigin and Fuchs achieved a highly non-trivial classification of length two extensions for the Witt algebra  using the first algebraic cohomology, with
coefficients in the module of the space of homomorphisms between the two arbitrary tensor density modules.   Recently, in \cite{D}, Dykes computed all possible 1-cocycles for the Witt algebra by employing a compatible action of the commutative algebra of Laurent polynomials in one variable.

  Motivated by Witten's study of left-invariant symplectic structures on the Virasoro group (\cite{Wi}), Ng and Taft established that all Lie bialgebra structures  on the Witt algebra and the Virasoro algebra are triangular coboundary through computation of 1-cocycles with  coefficients in the tensor product of the adjoint modules in \cite{NT} (see also \cite{KS}).
The Lie bialgebra structures on various infinite-dimensional Lie algebras related to the Witt algebra have been investigated by computing the first algebraic cohomology \cite{FLL,HLS,LPZ,LSX,SS}.   As mentioned in \cite{LPZ}, a key and complicated step of characterizing these Lie bialgebras is dealing with 1-cocycles of the Witt algebra with coefficients in the tensor product of  two tensor density modules. To the best knowledge of the authors, there is no general formula for these 1-cocycles in the literature.

This paper aims to classify 1-cocycles of the Witt algebra  with coefficients in  the tensor product of  two arbitrary tensor density modules.  Our main theorem can be stated as follows:

\vskip 5pt

\noindent\textbf{Main Theorem} (Theorem \ref{T}): For $\alpha, \beta \in \mathbb{C}$, we have

\begin{equation*}
\dim \mathrm{H}^1(\mathcal{W}, \mathcal{F}_{\alpha} \otimes \mathcal{F}_{\beta}) = \begin{cases}
5, & (\alpha, \beta) = (0,0); \\
2, & (\alpha, \beta) = (0,1) \text{ or } (1,0); \\
1, & (\alpha, \beta) = (0,2), (2,0), (1,1), \text{ or } \alpha + \beta = 1, \beta \neq 0,1; \\
0, & \text{otherwise}.
\end{cases}
\end{equation*}

Establishing the proof for this theorem is challenging and involves numerous calculations. Nonetheless, the fundamental concept underlying the proof remains clear. First, suitable normalizations allow selecting well-defined representatives for each cohomology class, reducing the computation of $\mathrm{H}^1(\mathcal{W}, \mathcal{F}_{\alpha} \otimes \mathcal{F}_{\beta})$ to that of normalized 1-cocycles. Through several technical lemmas, we then demonstrate a normalized 1-cocycle must assume a  specialized form, enabling an upper bound on $\dim \mathrm{H}^1(\mathcal{W}, \mathcal{F}_{\alpha} \otimes \mathcal{F}_{\beta})$. Finally, exhibiting linearly independent normalized 1-cocycles produces the result.

The interest of this classification for these 1-cocycles lies in the applications to the Lie bialgebras associated with some infinite dimensional Lie algebras.
When $(\alpha,\beta)=(-1,-1)$, we recover a theorem first proven by Ng and Taft in \cite{NT}. Furthermore, as consequence of the Main Theorem, we establish Lie bialgebra structures over various infinite-dimensional Lie algebras that contain the Witt algebra, such as the Oveshenko-Roger algebra \cite{OR2}, the twisted Heisenberg-Virasoro algebra \cite{ADKP}, the BMS algebra \cite{BGMM, ZD}, and the Schrödinger-Virasoro algebra \cite{RU}.

It is worth noting that, as stated in \cite{FF}, an isomorphism exists:
$
\mathrm{H}^1(\mathcal{W},\mathrm{Hom}(\mathcal{F}_{\alpha},\mathcal{F}_{\beta}))\cong \mathrm{H}_1(\mathcal{W}, \mathcal{F}_{\alpha} \otimes \mathcal{F}_{\beta}'),$
where $\mathcal{F}_{\beta}'$ denotes the graded dual modules of $\mathcal{F}_{\beta}$.
Our main theorem may be considered as a dual counterpart to the 1-cycles presented in \cite{FF}. However, there is currently no more efficient method for computing 1-cocycles via 1-cycles.

The paper is organized as follows. In Section \ref{section 2}, we introduce the notations and conventions used throughout the paper, we also present several technical lemmas on 1-cocycles of degree zero that will be useful later. In Section \ref{section 3}, we construct several 1-cocycles of the Witt algebra.
Section \ref{section 4} contains the proof of Theorem \ref{T} for the case $(\alpha, \beta) \notin \mathbb{Z}^2$, while Section \ref{section 5} handles the case $(\alpha, \beta) \in \mathbb{Z}^2$. Finally, in Section \ref{section 6} we apply Theorem \ref{T} to investigate several infinite-dimensional Lie bialgebras containing $\mathcal{W}$.

Throughout the paper, we use $\mathbb{C}, \mathbb{C}^*, \mathbb{Z}, \mathbb{Z}_{+}, \mathbb{N}$, and $\mathbb{Z}^*$ to denote the sets of complex numbers, non-zero complex numbers, integers, positive integers, non-negative integers, and non-zero integers, respectively.

\section{Preliminaries and notations}\label{section 2}

In this section, we review some preliminaries and notations  on 1-cocycles of the Witt algebra and obtain several useful lemmas.

\begin{defi}
The Witt algebra $\W$ is an infinite-dimensional  Lie algebra spanned by $\{L_n\mid n\in \bZ\}$ with the bracket
$$
[L_m, L_n]=(m-n)L_{m+n},\quad \forall m,n\in\bZ.
$$
\end{defi}

\begin{defi}
 For $\al\in \bC$, the tensor density module $\mathcal{F}_{\al}=\bigoplus_{n\in\bZ}\bC v_n$ for  $\W$ is given by
$$
L_m \cdot v_n=-(\al m+n)v_{m+n},\quad  \forall m,n\in\bZ.
$$
\end{defi}

 For $\al,\be\in\bC$,  $\mathcal{F}_{\al}\ot \mathcal{F}_{\be}$ is a $
\W$-module with
$$
L_{m}\cdot (v_{i}\ot v_{j})=-(i+\al m)v_{m+i}\ot v_{j}-(j+\be m)v_{i}\ot v_{m+j}, \quad\forall m, i, j\in\bZ.
$$
In particular,
$$
L_{0}\cdot (v_{i}\ot v_{j})=-(i+j)v_{i}\ot v_{j},\quad \forall i,j\in\bZ.$$  It follows that $\mathcal{F}_{\al}\ot \mathcal{F}_{\be}$  is equipped with a $\bZ$-grading:
$$
\mathcal{F}_{\al}\ot \mathcal{F}_{\be}=\bigoplus\limits_{k\in\bZ}(\mathcal{F}_{\al}\ot \mathcal{F}_{\be})_k,\quad\text{where}\quad (\mathcal{F}_{\al}\ot \mathcal{F}_{\be})_k=\bigoplus\limits_{i\in\bZ}\bC v_{i}\ot v_{k-i}.
$$

\begin{defi}[\cite{We}]
    Let $L$ be a Lie algebra and  $M$  an $L$-module. Denote by $\Der(L, M)$ the set of 1-cocycles (derivations) $d:L\rightarrow M$ with coefficients in $M$, which are linear maps satisfying the condition $d([x,y])=x\cdot d(y)-y\cdot d(x)$ for $x,y\in L$. The set $\Inn(L, M)$ consists of the 1-cocycles $d_v$ for some $v\in M$, where $d_v$ is called  { 1-coboundary (inner derivation) } with coefficients in $M$ defined by $d_{v}:x\mapsto x\cdot v$ for $x\in L$.  A  1-cocycle with coefficients in $M$  is called non-trivial if it is not a  1-coboundary with coefficients in $M$. We denote by ${\rm H}^1(L, M)=\Der(L, M)/\Inn(L, M)$ the first algebraic cohomology of $L$ with coefficients in $M$.
\end{defi}

\begin{rema}
    {\rm  It is a well-known fact \cite{Fu} that there exists a one-to-one correspondence between elements in ${\rm H}^1(L, M)$ and equivalence classes of extensions of $L$-modules by $M$. }
\end{rema}

\begin{defi}
    Let $G$ be an abelian group, $L=\bigoplus_{g \in G} L_g$ a $G$-graded Lie algebra over  $\bC$.  An $L$-module $V$ is said to be $G$-graded if $V=\bigoplus_{g \in G} V_g$ and $L_g \cdot V_h \subset V_{g+h}$ for all $g, h \in G$.
\end{defi}

\begin{prop}[\cite{Fa}]\label{Fa}
 Assume that $L$ is finitely generated as a $G$-graded Lie algebra. Let $V$ be a G-graded $L$-module. Then
$$
\operatorname{Der}(L, V)=\underset{g \in G}{\bigoplus} \operatorname{Der}(L, V)_g.
$$

\end{prop}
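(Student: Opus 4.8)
The plan is to show that an arbitrary derivation splits into homogeneous pieces, that each piece is again a derivation, and that finite generation forces the splitting to be a \emph{finite} sum, hence an element of the direct sum rather than merely of the direct product. Throughout, write $\Der(L,V)_g$ for the space of derivations of degree $g$, that is, those $d$ with $d(L_h)\subseteq V_{g+h}$ for all $h\in G$.

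First I would fix $d\in\Der(L,V)$ and, for each homogeneous $x\in L_h$, expand $d(x)=\sum_{g\in G}d(x)_{g+h}$ into its $V$-homogeneous components, where $d(x)_{g+h}\in V_{g+h}$. This defines, for each $g\in G$, a linear map $d_g\colon L\to V$ by setting $d_g(x)=d(x)_{g+h}$ for $x\in L_h$ and extending linearly; by construction $d_g(L_h)\subseteq V_{g+h}$, so $d_g$ is homogeneous of degree $g$. To see that each $d_g$ is itself a derivation, I would compare homogeneous components in the cocycle identity $d([x,y])=x\cdot d(y)-y\cdot d(x)$ for homogeneous $x\in L_{h_1}$ and $y\in L_{h_2}$. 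Since $L_{h_1}\cdot V_{h_2+g}\subseteq V_{h_1+h_2+g}$, the component of degree $h_1+h_2+g$ on both sides yields $d_g([x,y])=x\cdot d_g(y)-y\cdot d_g(x)$; bilinearity then gives the identity for all $x,y$, so $d_g\in\Der(L,V)_g$.

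The crux is to establish that $d_g=0$ for all but finitely many $g$. Here I would invoke the hypothesis that $L$ is finitely generated as a $G$-graded Lie algebra: choose finitely many homogeneous generators $x_1,\dots,x_n$. Each $d(x_i)$ has only finitely many nonzero homogeneous components, so there is a finite set $S\subseteq G$ with $d_g(x_i)=0$ for all $i$ whenever $g\notin S$. Because a derivation is determined by its values on a generating set—if $d_g$ vanishes on $x_1,\dots,x_n$, then the cocycle identity propagates this to every iterated bracket, and hence by linearity to all of $L$—it follows that $d_g=0$ for $g\notin S$. Thus $d=\sum_{g\in S}d_g$ is a finite sum, so $d\in\bigoplus_{g\in G}\Der(L,V)_g$.

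Finally I would record the reverse inclusion and the directness. The inclusion $\bigoplus_{g}\Der(L,V)_g\subseteq\Der(L,V)$ is immediate, as each homogeneous $d_g$ is a derivation and a finite sum of derivations is a derivation. Directness follows from the grading of $V$: if $\sum_g d_g=0$ with $d_g\in\Der(L,V)_g$, then evaluating on homogeneous $x\in L_h$ gives $\sum_g d_g(x)=0$ with $d_g(x)\in V_{g+h}$, so each summand vanishes and hence each $d_g=0$. I expect the main obstacle to be precisely the finiteness step of the third paragraph: without finite generation the componentwise decomposition would only place $d$ in the direct product $\prod_g\Der(L,V)_g$, and the reduction to finitely many homogeneous generators together with the rigidity of derivations under their values on generators is exactly what secures the direct-sum conclusion.
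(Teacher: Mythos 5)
Your proof is correct and is the standard argument for this result; the paper itself gives no proof, citing it directly from Farnsteiner \cite{Fa}, whose proof proceeds in essentially the same way (homogeneous decomposition of $d$, each component is a derivation since the kernel-agreement locus of derivations is a subalgebra, and finite generation by homogeneous elements forces all but finitely many components to vanish). No gaps to report.
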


\begin{lemm}\label{L1.1}The derivation algebra $\Der(\W,\mathcal{F}_{\al}\ot \mathcal{F}_{\be})$ is $\bZ$-graded, i.e.,
\begin{eqnarray*}
&&\Der(\W,\mathcal{F}_{\al}\ot \mathcal{F}_{\be})=\bigoplus\limits_{k\in\bZ}\Der(\W,\mathcal{F}_{\al}\ot \mathcal{F}_{\be})_{k},\quad D_k(L_{m})\subseteq (\mathcal{F}_{\al}\ot \mathcal{F}_{\be})_{m+k}
\end{eqnarray*}
for any $m\in\bZ, D_k\in\Der(\W,\mathcal{F}_{\al}\ot \mathcal{F}_{\be})_{k}$.
\end{lemm}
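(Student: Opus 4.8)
The plan is to deduce the statement directly from Proposition \ref{Fa}, whose three hypotheses I would verify for $L=\W$, $G=\bZ$, and $V=\mathcal{F}_\al\ot\mathcal{F}_\be$. First, $\W$ carries the evident $\bZ$-grading $\W=\bigoplus_{n\in\bZ}\W_n$ with $\W_n=\bC L_n$, and this is compatible with the bracket since $[L_m,L_n]=(m-n)L_{m+n}\in\W_{m+n}$. Second, the module $V$ has already been shown in the excerpt to be $\bZ$-graded, $V=\bigoplus_k V_k$ with $V_k=\bigoplus_i\bC v_i\ot v_{k-i}$; here I would simply record the module compatibility $\W_m\cdot V_k\subseteq V_{m+k}$, which is immediate from the action formula $L_m\cdot(v_i\ot v_j)=-(i+\al m)v_{m+i}\ot v_j-(j+\be m)v_i\ot v_{m+j}$, since both summands lie in $V_{m+i+j}$.

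The one genuinely substantive hypothesis is that $\W$ is finitely generated as a $\bZ$-graded Lie algebra. I would verify that $\{L_{\pm1},L_{\pm2}\}$ generates $\W$: from $[L_1,L_n]=(1-n)L_{n+1}$ one obtains, by induction starting from $L_1,L_2$, all $L_n$ with $n\geq1$; the mirror relation $[L_{-1},L_n]=(-1-n)L_{n-1}$ yields, starting from $L_{-1},L_{-2}$, all $L_n$ with $n\leq-1$; and $[L_1,L_{-1}]=2L_0$ supplies $L_0$. This exhibits $\W$ as generated by four homogeneous elements, completing the check.

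With all hypotheses in place, Proposition \ref{Fa} gives the decomposition $\Der(\W,V)=\bigoplus_{k\in\bZ}\Der(\W,V)_k$. It then only remains to unpack what the degree-$k$ component means: a homogeneous derivation $D_k$ of degree $k$ is a graded linear map raising degree by $k$, so $D_k(L_m)\in V_{\deg L_m+k}=V_{m+k}=(\mathcal{F}_\al\ot\mathcal{F}_\be)_{m+k}$, which is exactly the asserted containment.

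I do not anticipate a serious obstacle here, since the statement is essentially a specialization of the cited Proposition \ref{Fa}. The only points demanding care are the finite-generation verification above and consistent bookkeeping of the grading convention, namely that a degree-$k$ derivation shifts the degree of $L_m$ from $m$ to $m+k$ rather than $k-m$ or $m-k$.
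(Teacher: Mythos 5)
Your proposal is correct and follows exactly the paper's route: the paper's proof is a one-line appeal to Proposition \ref{Fa} together with the facts that $\W$ is finitely generated and $\bZ$-graded, and you have simply spelled out the verification of those hypotheses (including the generation of $\W$ by $\{L_{\pm1},L_{\pm2}\}$, which the paper also uses later) and unpacked the meaning of the degree-$k$ component. No gaps.
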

\begin{proof}
According to Proposition \ref{Fa}, coupled with the knowledge that the Witt algebra $\W$ is both finitely generated and $\bZ$-graded.
\end{proof}

\begin{lemm}\label{L1.2}
$\Der(\W,\mathcal{F}_{\al}\ot \mathcal{F}_{\be})=\Der(\W,\mathcal{F}_{\al}\ot \mathcal{F}_{\be})_{0}+\Inn(\W,\mathcal{F}_{\al}\ot \mathcal{F}_{\be})$.
\end{lemm}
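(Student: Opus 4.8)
The plan is to exploit the grading established in Lemma \ref{L1.1} and to show that every homogeneous 1-cocycle of nonzero degree is automatically a 1-coboundary. Since the degree-zero component survives on the right-hand side, this immediately yields the asserted decomposition. Concretely, by Lemma \ref{L1.1} we may write any $D\in\Der(\W,\mathcal{F}_{\al}\ot\mathcal{F}_{\be})$ as a sum of homogeneous pieces $D=\sum_k D_k$ with $D_k\in\Der(\W,\mathcal{F}_{\al}\ot\mathcal{F}_{\be})_k$, so it suffices to prove that for every $k\neq 0$ each such $D_k$ lies in $\Inn(\W,\mathcal{F}_{\al}\ot\mathcal{F}_{\be})$.

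The main device is the cocycle identity evaluated on the bracket $[L_0,L_m]=-mL_m$. Fixing $k\neq 0$ and writing $D=D_k$, the condition $D([L_0,L_m])=L_0\cdot D(L_m)-L_m\cdot D(L_0)$ gives
$$
-mD(L_m)=L_0\cdot D(L_m)-L_m\cdot D(L_0).
$$
Because $D$ has degree $k$, we have $D(L_m)\in(\mathcal{F}_{\al}\ot\mathcal{F}_{\be})_{m+k}$, and since $L_0$ acts on $(\mathcal{F}_{\al}\ot\mathcal{F}_{\be})_{m+k}$ by the scalar $-(m+k)$, the term $L_0\cdot D(L_m)$ equals $-(m+k)D(L_m)$. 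Substituting this and cancelling the common $-mD(L_m)$ contribution, one is left with the relation $kD(L_m)=-L_m\cdot D(L_0)$ for all $m\in\bZ$.

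Setting $v:=-\tfrac{1}{k}D(L_0)\in(\mathcal{F}_{\al}\ot\mathcal{F}_{\be})_k$, which is legitimate precisely because $k\neq 0$, the previous identity reads $D(L_m)=L_m\cdot v=d_v(L_m)$ for every $m\in\bZ$. As the $L_m$ span $\W$, this proves $D=d_v$ is inner, completing the reduction and hence the lemma. I do not anticipate any serious obstacle: the only point requiring genuine care is the computation of the $L_0$-eigenvalue on the homogeneous piece carrying $D(L_m)$. It is worth noting that invertibility of multiplication by $k$ is exactly what fails when $k=0$, where the relation degenerates to $L_m\cdot D(L_0)=0$ and no longer determines $D$ in terms of $D(L_0)$; this explains why $\Der(\W,\mathcal{F}_{\al}\ot\mathcal{F}_{\be})_0$ cannot in general be absorbed into the coboundaries and must persist as a separate summand, making the degree-zero analysis the substantive part of the subsequent development.
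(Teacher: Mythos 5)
Your proof is correct and follows essentially the same route as the paper: both decompose $D$ into homogeneous components via Lemma \ref{L1.1}, apply the cocycle identity to the bracket of $L_0$ with $L_m$ to obtain $kD_k(L_m)=-L_m\cdot D_k(L_0)$, and conclude that each $D_k$ with $k\neq 0$ is the coboundary of $-\tfrac{1}{k}D_k(L_0)$. Your explicit computation of the $L_0$-eigenvalue $-(m+k)$ on $(\mathcal{F}_{\al}\ot\mathcal{F}_{\be})_{m+k}$ is exactly the step the paper uses implicitly, so there is nothing to add.
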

\begin{proof}

Assuming $k\neq 0$ and $D_k\in \Der(\W,\mathcal{F}_{\al}\ot \mathcal{F}_{\be})_{k}$, we have $D_{k}(L_{0})=\sum\limits_{r=0}^{s} a(i_{r})v_{i_{r}}\ot v_{k-i_{r}}$, where $a(i_r)\in\bC$ and $s\in\bN$. Using the equations $D_k[L_n,L_0]=L_n\cdot D_k(L_0)-L_0 \cdot D_k(L_n)$ and $D_k(L_n)\subseteq (\mathcal{F}_{\al}\ot \mathcal{F}_{\be})_{k+n}$, we obtain
$L_n\cdot D_k(L_0)=-k D_k(L_n)$
for any integers $n,k$. Therefore,
$$D_k(L_n)= - \frac1{k} L_n\cdot D_k (L_0)= \frac1{k}\sum _{r=1}^sa(i_r)\Big((i_r+\alpha n)v _{n+i_r}\otimes v _{k-i_r}+( k-i_r+\beta n)v_i{_ri }\otimes v _ {n+k-i_r}\Big).$$
This implies that
$$D_k (L_n)= L_n \cdot  \left(\sum_ { r = 1 }^s{\frac{a(i_r)}{k}}v_{ i_r}\otimes v_{ k - i_r }\right), $$
which shows that $D_k$ is a 1-coboundary.

\end{proof}

\begin{lemm}\label{L1.3}
For any $D\in\Der(\W,\mathcal{F}_{\al}\ot \mathcal{F}_{\be})_{0}$, we have
$
D(L_{0})=a \delta_{\al,0}\delta_{\be,0} v_{0}\otimes v_{0}
$
for some $a\in\bC$.
\end{lemm}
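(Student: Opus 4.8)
The plan is to show that $D(L_0)$ is a $\W$-invariant (singular) vector in $\mathcal{F}_\al \ot \mathcal{F}_\be$, and then to determine all such vectors by an extremal-index argument.

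First, since $D$ has degree $0$, Lemma \ref{L1.1} gives $D(L_0) \in (\mathcal{F}_\al \ot \mathcal{F}_\be)_0 = \bigoplus_{i\in\bZ} \bC\, v_i \ot v_{-i}$, so I may write $D(L_0) = \sum_i c_i\, v_i \ot v_{-i}$ as a finite sum, with $c_i \in \bC$ vanishing for all but finitely many $i$. Next I apply the cocycle identity to the pair $(L_0, L_n)$. Using $[L_0, L_n] = -n L_n$ together with the fact that $L_0$ acts on the degree-$n$ component $(\mathcal{F}_\al \ot \mathcal{F}_\be)_n \ni D(L_n)$ by the scalar $-n$, the two terms involving $D(L_n)$ cancel and I am left with $L_n \cdot D(L_0) = 0$ for every $n \in \bZ$. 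Thus $D(L_0)$ is annihilated by all of $\W$, and the statement reduces to computing the space of invariants of $\mathcal{F}_\al \ot \mathcal{F}_\be$.

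I then expand $L_n \cdot D(L_0) = 0$ in the basis $\{v_p \ot v_{n-p}\}$ and read off, for each $n$ and $i$, the scalar relation
\[
c_i(i + \al n) + c_{n+i}(\be n - n - i) = 0.
\]
Let $S = \{i : c_i \neq 0\}$, a finite set, and suppose $S \neq \emptyset$ with $M = \max S$ and $m = \min S$. Taking $i = M$ and $n \geq 1$ forces $c_{n+M} = 0$, so the relation collapses to $c_M(M + \al n) = 0$; since $c_M \neq 0$ this gives $M + \al n = 0$ for all $n \geq 1$, and comparing $n=1,2$ yields $\al = 0$ and $M = 0$. Symmetrically, taking $i = m - n$ (so that $c_{m-n} = 0$ for $n \geq 1$) collapses the relation to $c_m(\be n - m) = 0$, whence $\be = 0$ and $m = 0$. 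Hence $S = \{0\}$ and $\al = \be = 0$.

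Putting this together: either $S = \emptyset$, so $D(L_0) = 0$, or $\al = \be = 0$ and $D(L_0) = c_0\, v_0 \ot v_0$; both cases are recorded by the formula $D(L_0) = a\,\delta_{\al,0}\delta_{\be,0}\, v_0 \ot v_0$. I expect the only genuinely delicate point to be the use of \emph{two} distinct values of $n$ in the extremal step: a single relation (say $n=1$) would only pin $\al, \be$ down to integers, and it is the freedom to let $n$ range over $n \geq 1$ that forces $\al = \be = 0$ exactly. The remaining manipulations are routine.
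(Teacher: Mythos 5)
Your proof is correct and follows the same strategy as the paper: both reduce the claim to showing that $D(L_{0})$ is a $\W$-invariant vector in $(\mathcal{F}_{\al}\ot \mathcal{F}_{\be})_{0}$ via the cocycle identity on $[L_{n},L_{0}]$, and then analyze the resulting coefficient relations on the finite support of $D(L_{0})$. Your extremal-index argument with $n$ ranging over all positive integers is in fact a cleaner route to the conclusion than the paper's, which first specializes to $n=\pm 1$, splits into cases according to whether $\al=\be$ and whether these are integers, and only invokes general $n$ in the final integral case.
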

\begin{proof}
Let $D\in\Der(\W,\mathcal{F}_{\al}\ot \mathcal{F}_{\be})_{0}$ and suppose that for any $n\in\bZ^{*}$, we have $D(L_{n}) = \sum_{i\in\Z} a_{n,i}v_i \otimes v_{n-i}$, where $a_{n, i}\in\C$. It should be noted that $L_n \cdot D(L_0)=0$ for all $n\in\bZ$. Additionally, assume that $D(L_0)=\sum_{i=p}^{q} a(i)v_i \otimes v_{-i}$, where $a(i)\in\bC$ and $p, q$ are integers.
Then
\begin{equation}\nonumber
L_{n}\cdot D(L_{0})=-\sum\limits_{i=p}^{q} a(i)\Big((i+\al n)v_{n+i}\ot v_{-i}+(-i+\be n)v_{i}\ot v_{n-i}\Big), \quad\forall  n\in\bZ.
\end{equation}
Taking $n=1,-1$ respectively, it follows that
\begin{eqnarray*}
\sum\limits_{i=p}^{q} a(i)\Big((i+\al )v_{1+i}\ot v_{-i}+(-i+\be )v_{i}\ot v_{1-i}\Big)=\sum\limits_{i=p}^{q} a(i)\Big((i-\al )v_{i-1}\ot v_{-i}+(-i-\be )v_{i}\ot v_{-i-1}\Big)=0.
\end{eqnarray*}
Then
\begin{equation}\label{1.4a}
 a(p)(-p+\be )=0,\quad a(p)(p-\al )=0,\quad a(q)(q+\be )=0,\quad  a(q)(q+\al )=0;
\end{equation}
\begin{equation}\label{1.4aa}
a(i)(-i+\be )+a(i-1)(i-1+\al)=0,\quad\quad\, p+1\leq i\leq q;
\end{equation}
\begin{equation}\label{1.5aa}
a(i+1)(i+1-\al)-a(i)(i+\be )=0,\quad\quad\, p\leq i\leq q-1.
\end{equation}
It is clear that
$
a(p)(\al-\be)=a(q)(\al-\be)=0.
$
Replacing $i$ by $i-1$ in (\ref{1.5aa}), we have
\begin{equation}\label{1.5aab}
a(i)(i-\al)-a(i-1)(i-1+\be )=0, \qquad\, p+1\leq i\leq q.
\end{equation}
Using (\ref{1.4aa}) and (\ref{1.5aab}), we have
$[a(i)-a(i-1)](\al-\be)=0$ for $p+1\leq i\leq q.$

{\bf Case 1: $\alpha\neq \beta$.} Clearly, $a(i)=0$ for all $p\leq i\leq q$, which implies that $D(L_{0})=0$.

{\bf Case 2: $\alpha=\beta\notin\bZ$.} For any integers $p,q$, we have $a(p)=a(q)=0$. Therefore, using (\ref{1.4aa}) or (\ref{1.5aa}), we conclude that $a(i)=0$ for all $p\leq i\leq q$, and hence, $D(L_{0})=0$.

{\bf Case 3: $\alpha=\beta\in\bZ$.} Without loss of generality, let us assume that both $a(p)$ and $a(q)$ are non-zero. Then from (\ref{1.4a}), it follows that $\beta=\alpha=p=-q$.

If $\alpha=\beta\neq 0$, then we must have $p<0$, and also, since the coefficients vanish outside the range of indices $(p,q)$, we get that $q>0$. In this case,
$$
D(L_{0})=\sum_{i=-q}^{q} a(i) v_i \otimes v_{-i}.
$$
Now applying the operator ${L_n}$ to this expression yields
$$
-\sum_{i=-q}^{q}\Big(a(i)(i+\alpha n)v_{n+i}\otimes v_{-i}- a(i)(-i+\beta n)v_i \otimes v_{n-i}\Big)= 0,\quad \forall n \in\bZ.
$$
Thus, for every positive integer value of~$n,$ we obtain two equations:
$a(q)(n-q-\alpha n)=0$ and $a(-q)(n+q-\beta n)=0.$
Since both $a(q)$ and $a(-q)$ are non-zero, we must have that $\alpha=\beta=0$, which leads to a contradiction.

Therefore, we conclude that $\alpha=\beta=0$. The proof is now complete.

\end{proof}

Let $D\in\Der(\W, \mathcal{F}_{\al}\ot \mathcal{F}_{\be})_{0}$ and
\begin{eqnarray*}
D(L_{n}) =\sum\limits_{i\in\Z} a_{n,i}v_i\!\otimes\!v_{n-i}, \quad\forall\,n\in\bZ^{*},
\end{eqnarray*}
where $a_{n, i}\in\C$. We denote by
$$l=\max\{\,|i\,|\,\big|\,a_{1,i}\ne0\}.$$

\begin{lemm}\label{L1.4}If $\be\notin\bZ$, there exist  $c_{i}\in\bC(i=-l,\cdots\, l)$ such that
$$D(L_1)=L_1\cdot\left( \sum\limits_{i=-l}^{l}c_{i}v_{i}\otimes\,v_{-i}\right)+x_{l}(\al+l)v_{l+1}\otimes\,v_{-l}.$$

\end{lemm}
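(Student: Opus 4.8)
The plan is to construct the degree-zero element $w=\sum_{i=-l}^{l}c_i\,v_i\ot v_{-i}$ explicitly, by matching $L_1\cdot w$ against $D(L_1)$ coefficient by coefficient; the hypothesis $\be\notin\bZ$ is exactly what makes the resulting linear system triangular and uniquely solvable. Writing $D(L_1)=\sum_{i=-l}^{l}a_{1,i}\,v_i\ot v_{1-i}$ (the terms vanish for $|i|>l$ by the definition of $l$), I would first record the action that drives the computation, namely
$$L_1\cdot(v_i\ot v_{-i})=-(i+\al)\,v_{1+i}\ot v_{-i}-(\be-i)\,v_i\ot v_{1-i},$$
so that
$$L_1\cdot w=\sum_{i=-l}^{l}c_i\Big(-(i+\al)\,v_{1+i}\ot v_{-i}-(\be-i)\,v_i\ot v_{1-i}\Big).$$
Reindexing the first sum by $i=k-1$, the coefficient of the basis vector $v_k\ot v_{1-k}$ in $L_1\cdot w$ equals $-c_{k-1}(k-1+\al)-c_k(\be-k)$, under the convention $c_i=0$ for $|i|>l$.

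Next I would solve the system $-c_{k-1}(k-1+\al)-c_k(\be-k)=a_{1,k}$ for $k=-l,\dots,l$. Because $\be\notin\bZ$, the coefficient $\be-k$ of $c_k$ is nonzero for every $k\in\bZ$, so each equation determines $c_k$ from $c_{k-1}$. Starting at the bottom index $k=-l$, where $c_{-l-1}=0$, the equation $-c_{-l}(\be+l)=a_{1,-l}$ fixes $c_{-l}$, and the recursion
$$c_k=-\frac{a_{1,k}+c_{k-1}(k-1+\al)}{\be-k}$$
then determines $c_{-l+1},\dots,c_l$ in turn. For $k<-l$ one has $c_{k-1}=c_k=0$ and $a_{1,k}=0$, so those coefficients already agree and no spurious component appears below index $-l$.

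Finally I would isolate the single index that is not matched, namely $k=l+1$. There $c_{l+1}=0$ while $c_l$ has just been determined, so the coefficient of $v_{l+1}\ot v_{-l}$ in $L_1\cdot w$ is $-c_l(l+\al)$, whereas $a_{1,l+1}=0$. Hence $D(L_1)-L_1\cdot w$ collapses to the single term $c_l(\al+l)\,v_{l+1}\ot v_{-l}$, which is precisely the claimed identity with $x_l=c_l$. I do not expect a genuine obstacle here: the entire argument is a triangular linear solve, and the only care needed is the bookkeeping at the two ends of the index range, where one must check that the recursion produces no component below $-l$ and leaves a residual only at $k=l+1$. The role of $\be\notin\bZ$ is sharply localized — it is used solely to guarantee $\be-k\neq0$ at each step so that the recursion never stalls.
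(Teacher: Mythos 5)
Your proposal is correct and follows essentially the same route as the paper: both set up the identical triangular linear system $-(k-1+\al)c_{k-1}-(\be-k)c_k=a_{1,k}$ for $-l\le k\le l$, use $\be\notin\bZ$ to guarantee the diagonal entries $\be-k$ never vanish, and observe that the only unmatched coefficient is the residual $c_l(\al+l)$ at $v_{l+1}\ot v_{-l}$. Your endpoint bookkeeping (no spurious component below index $-l$, identification $x_l=c_l$) is a slightly more explicit rendering of exactly what the paper does.
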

\begin{proof}
Suppose $\be\notin\bZ$. For any $l\in\mathbb{Z}_{+}$, we have $D(L_1)=\sum\limits_{i=-l}^{l}a_{1,i}v_{i}\otimes\,v_{-i+1}$ and
\begin{eqnarray*}
&&L_{1}\cdot\left(\sum\limits_{i=-l}^{l}x_{i}v_{i}\otimes\,v_{-i}\right)=-\sum\limits_{i=-l}^{l}x_{i}\Big((i+\al)v_{i+1}\otimes\,v_{-i}+(-i+\be)v_{i}\otimes\,v_{-i+1}\Big)\\
&=&-x_{-l}(l+\be)v_{-l}\otimes\,v_{l+1}-\sum\limits_{i=-l+1}^{l}\Big((i-1+\al)x_{i-1}+(\be-i)x_{i}\Big)v_{i}\otimes\,v_{-i+1}-x_{l}(l+\al)v_{l+1}\otimes\,v_{-l}.
\end{eqnarray*}
Since $\be\notin\bZ$,  we have $\be+i\neq 0$ for $-l\leq i\leq l$. So the following system of linear equations admits a unique solution:
\begin{eqnarray*}
& &x_{-l}(l+\be)=-a_{1,-l}, \quad (i-1+\al)x_{i-1}+(\be-i)x_{i}=-a_{1,i}, \quad\quad\, -l+1\leq i\leq l.
\end{eqnarray*}
Let $x_{i}=c_{i}(i=-l, \cdots l)$ be the solution. The proof is complete.

\end{proof}

\begin{lemm}\label{L1.4e1}

{\rm (1)}\ If $\be\in\bZ$ and $|\be|>l$, there exist some $x_{i}\in\bC$ such that
$$D(L_1)=L_1\cdot\left( \sum\limits_{i=-l}^{l}x_{i}v_{i}\otimes\,v_{-i}\right)+x_{l}(\al+l)v_{l+1}\otimes\,v_{-l}.$$

{\rm (2)}\ If $\al\in\bZ$ and $\al<-l+1$ or {$\al>l+1$},  there exist  $y_{i}\in\bC$ such that
$$D(L_1)=L_{1}\cdot\left( \sum\limits_{i=-l-1}^{l-1}y_{i}v_{i}\otimes\,v_{-i}\right)+y_{-l-1}(\be+l+1)v_{-l-1}\otimes\,v_{l+2}.$$
\end{lemm}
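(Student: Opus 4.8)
The plan is to mimic the proof of Lemma~\ref{L1.4}: for a degree-$0$ element $w=\sum_i t_i\,v_i\ot v_{-i}$, I expand $L_1\cdot w$ via the module action, collect the coefficient of each homogeneous vector $v_j\ot v_{1-j}$, and then choose the scalars $t_i$ so that $L_1\cdot w$ agrees with $D(L_1)=\sum_{i}a_{1,i}\,v_i\ot v_{1-i}$ in every degree except a single boundary term that is forced to survive as the stated remainder. Concretely, from
$$
L_1\cdot(v_i\ot v_{-i})=-(i+\al)\,v_{i+1}\ot v_{-i}-(\be-i)\,v_i\ot v_{1-i},
$$
the coefficient of $v_j\ot v_{1-j}$ in $L_1\cdot w$ is $-\big((j-1+\al)t_{j-1}+(\be-j)t_j\big)$. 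Matching coefficients with $D(L_1)$ produces a bidiagonal (triangular) linear system in the $t_i$, whose solvability is governed entirely by the nonvanishing of the diagonal entries; since I only need existence, a triangular system with nonzero diagonal suffices.

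For part~(1) I keep the range $-l\le i\le l$ exactly as in Lemma~\ref{L1.4} and solve from the bottom index $j=-l$ upward. The diagonal coefficients encountered are $l+\be$ (for $j=-l$) and $\be-j$ for $-l+1\le j\le l$; these are all nonzero precisely when $\be\notin\{-l,-l+1,\dots,l\}$. Under the hypothesis $\be\in\bZ$ with $|\be|>l$ this holds, so the system has a (unique) solution $t_i=x_i$, and the only unmatched coefficient is the top boundary term $-x_l(l+\al)\,v_{l+1}\ot v_{-l}$, which yields the claimed expression. Thus part~(1) is essentially a verbatim repetition of Lemma~\ref{L1.4}, the single new observation being that $|\be|>l$ replaces $\be\notin\bZ$ in guaranteeing the nonvanishing.

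For part~(2) the idea is the mirror image: I shift the summation range to $-l-1\le i\le l-1$ and solve the triangular system from the top index $j=l$ downward, so that the leftover boundary term lands at the bottom index $j=-l-1$ rather than the top. With $w=\sum_{i=-l-1}^{l-1}y_i\,v_i\ot v_{-i}$, the image $L_1\cdot w$ is supported in degrees $-l-1\le j\le l$; matching the $2l+1$ coefficients for $-l\le j\le l$ determines $y_{l-1},y_{l-2},\dots,y_{-l-1}$ recursively, the $j$-th equation solving for $y_{j-1}$ with diagonal coefficient $j-1+\al$. These coefficients (for $-l\le j\le l$) are nonzero exactly when $\al\notin\{1-l,2-l,\dots,l+1\}$, i.e. $\al<-l+1$ or $\al>l+1$ for $\al\in\bZ$, which is precisely the hypothesis. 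The single unmatched coefficient is then $-y_{-l-1}(\be+l+1)\,v_{-l-1}\ot v_{l+2}$, giving the stated formula.

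The routine part is the coefficient bookkeeping. The main thing to get right is tracking \emph{which} boundary term is left unmatched in each case and checking that the numerical hypotheses on $\al$ (resp. $\be$) correspond \emph{exactly} to the nonvanishing of the diagonal entries of the relevant triangular system. In particular, for part~(2) one must verify that solving downward from $j=l$ genuinely isolates the surviving term at $j=-l-1$, and that no intermediate diagonal coefficient $j-1+\al$ with $-l\le j\le l$ vanishes under the stated range of $\al$; this is where the slightly asymmetric bound $\al<-l+1$ or $\al>l+1$ (as opposed to $|\al|>l$) comes from.
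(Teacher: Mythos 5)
Your argument is correct and is exactly what the paper intends: its proof of this lemma is the single line ``Similar to the proof of Lemma~\ref{L1.4},'' and your write-up carries out precisely that adaptation, solving the bidiagonal system bottom-up for part~(1) (diagonal entries $l+\be$ and $\be-j$, nonvanishing iff $\be\notin\{-l,\dots,l\}$, i.e.\ $|\be|>l$) and top-down for part~(2) (diagonal entries $j-1+\al$, nonvanishing iff $\al\notin\{1-l,\dots,l+1\}$), with the correct surviving boundary terms. No gaps; you have in fact supplied more detail than the paper does.
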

\begin{proof}
Similar to the proof of Lemma {\ref{L1.4}}.
\end{proof}

\begin{lemm}\label{L1.4e4}

{\rm (1)}\ If $\be\in\bZ$, and $-l\leq \be \leq l$, there exist some $x_{i}\in\bC(i=-l,\cdots\, l)$ such that
$$D(L_1)=\Big(a_{1,\be}+{(\al+\be-1)}x_{\be-1}\Big)v_{\be}\otimes\,v_{-\be+1} + L_1\cdot\left( \sum\limits_{i=-l}^{l}x_{i}v_{i}\otimes\,v_{-i}\right)+x_{l}(\al+l)v_{l+1}\otimes\,v_{-l},$$
where $x_{\be}=0$.\par

{\rm (2)}\ {If $\al\in\bZ$, and $-l+2\leq\al\leq l+1$, there exist some $y_{i}\in\bC(i=-l-1,\cdots\, l-1)$ such that
$$D(L_1)=\Big(a_{1,-\al}+(\al+\be-1)y_{-\al+1}\Big)v_{-\al+1}\otimes\,v_{\al} + L_{1}\cdot\left( \sum\limits_{i=-l-1}^{l-1}y_{i}v_{i}\otimes\,v_{-i}\right)+y_{-l-1}(\be+l+1)v_{-l-1}\otimes\,v_{l+2},$$
where $y_{-\al}=0$}.
\end{lemm}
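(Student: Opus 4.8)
The plan is to follow the proof of Lemma~\ref{L1.4} almost verbatim; the only genuinely new phenomenon is that the integrality hypothesis on $\be$ (resp.\ $\al$) forces exactly one coefficient of the resulting recursion to vanish, and it is this single degeneracy that produces the extra residual term. I would treat part~(1) in detail and obtain part~(2) by the mirror-image argument.

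For part~(1) I would begin as in Lemma~\ref{L1.4} by writing $D(L_1)=\sum_{i=-l}^{l}a_{1,i}v_i\ot v_{-i+1}$ and expanding
$$L_1\cdot\Big(\sum_{i=-l}^{l}x_i v_i\ot v_{-i}\Big)=-x_{-l}(l+\be)v_{-l}\ot v_{l+1}-\sum_{i=-l+1}^{l}\big((i-1+\al)x_{i-1}+(\be-i)x_i\big)v_i\ot v_{-i+1}-x_l(l+\al)v_{l+1}\ot v_{-l}.$$
Adding the boundary term $x_l(\al+l)v_{l+1}\ot v_{-l}$ cancels the top component, and comparing coefficients of $v_i\ot v_{-i+1}$ for $-l\le i\le l$ yields the same system as before: $x_{-l}(l+\be)=-a_{1,-l}$ together with $(i-1+\al)x_{i-1}+(\be-i)x_i=-a_{1,i}$ for $-l+1\le i\le l$. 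The crucial point is that, since $\be\in\bZ$ and $-l\le\be\le l$, the factor $\be-i$ of $x_i$ vanishes exactly once, at $i=\be$ (and for $\be=-l$ it is instead the factor $l+\be$ of $x_{-l}$ that vanishes). I would therefore set $x_\be=0$ and solve the remaining equations by a two-sided recursion: running upward from $i=-l$ to $i=\be-1$ the factor $\be-i$ is strictly positive, and running from $i=\be+1$ to $i=l$ it is strictly negative, so in both ranges each $x_i$ is determined uniquely. Only the equation at $i=\be$ cannot be met by the $x_i$ alone, and its defect is exactly $a_{1,\be}+(\al+\be-1)x_{\be-1}$, the coefficient of the residual term $v_\be\ot v_{-\be+1}$ in the statement.

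Part~(2) uses the symmetric ansatz $\sum_{i=-l-1}^{l-1}y_i v_i\ot v_{-i}$, chosen so that the uncancelled component of $L_1\cdot(\cdots)$ now falls at the \emph{low} end $v_{-l-1}\ot v_{l+2}$ rather than the high end; adding the boundary term $y_{-l-1}(\be+l+1)v_{-l-1}\ot v_{l+2}$ removes it, and comparing coefficients gives the system $(i-1+\al)y_{i-1}+(\be-i)y_i=-a_{1,i}$ for $-l\le i\le l$ (with the convention $y_i=0$ for $i\notin\{-l-1,\dots,l-1\}$). Because $\al\in\bZ$ with $-l+2\le\al\le l+1$, it is now the factor $i-1+\al$ multiplying $y_{i-1}$ that vanishes exactly once, at $i=-\al+1$, while $l-1+\al\ne0$ at the starting index $i=l$. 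I would therefore run the recursion downward from $i=l$, set $y_{-\al}=0$ at the obstruction $i=-\al+1$, and continue downward, all remaining factors being nonzero; the unmet equation at $i=-\al+1$ then leaves precisely the residual term at $v_{-\al+1}\ot v_\al$ with the coefficient recorded in the statement.

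The recursion itself is routine, being a line-by-line copy of Lemma~\ref{L1.4}. The hard part---and the only place an error is likely to enter---is the bookkeeping around the single vanishing factor: one must verify that the degeneracy occurs at exactly one index, that fixing $x_\be=0$ (resp.\ $y_{-\al}=0$) is consistent with the untouched equations, that the two monotone halves of the recursion really have nonvanishing coefficients, and that the degenerate endpoint cases $\be=-l$ in part~(1) and $\al=l+1$ in part~(2)---where the residual sits at the very edge of the support and one of the two ``extra'' terms collapses---are absorbed correctly into the stated formulas. Carrying the reindexings $j=i+1$ and $j=i$ through these endpoints is where I expect the main obstacle to lie.
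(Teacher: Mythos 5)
Your proposal is correct and follows essentially the same route as the paper: expand $L_1\cdot\left(\sum_{i=-l}^{l}x_i v_i\otimes v_{-i}\right)$, observe that the coefficient $\be-i$ of $x_i$ vanishes only at $i=\be$, set $x_\be=0$, solve the remaining triangular system uniquely, and read off the residual term at $v_\be\otimes v_{-\be+1}$, with the mirror argument for part (2). Your explicit bookkeeping at the endpoint cases $\be=-l$ and $\al=l+1$ is in fact slightly more careful than the paper's proof, which simply lists the system (including the equation $x_{-l}(l+\be)=-a_{1,-l}$) and asserts a unique solution after deleting the equation at $i=\be$.
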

\begin{proof}
Suppose $\be\in\bZ$, and $-l\leq \be \leq l$. Then $D(L_1)=\sum\limits_{i=-l}^{l}a_{1,i}v_{i}\otimes\,v_{-i+1}$ and
\begin{eqnarray*}
& &L_{1}\cdot\left(\sum\limits_{i=-l}^{l}x_{i}v_{i}\otimes\,v_{-i}\right)  =-\sum\limits_{i=-l}^{l}x_{i}\Big((i+\al)v_{i+1}\otimes\,v_{-i}+(-i+\be)v_{i}\otimes\,v_{-i+1}\Big)\\
&=&-x_{-l}(l+\be)v_{-l}\otimes\,v_{l+1}-\sum\limits_{i=-l+1}^{l}\Big((i-1+\al)x_{i-1}+(\be-i)x_{i}\Big)v_{i}\otimes\,v_{-i+1}-x_{l}(l+\al)v_{l+1}\otimes\,v_{-l},
\end{eqnarray*}
where $x_{\be}=0$. Then the following system of linear equations admits a unique solution:
\begin{eqnarray*}
& &x_{-l}(l+\be)=-a_{1,-l},\quad (i-1+\al)x_{i-1}+(\be-i)x_{i}=-a_{1,i}, \quad -l+1\leq i\leq l, i\neq\be.
\end{eqnarray*}
Furthermore,
$$D(L_1)=\Big(a_{1,\be}+{(\al+\be-1)}x_{\be-1}\Big)v_{\be}\otimes\,v_{-\be+1} + L_1\cdot\left( {\sum\limits_{i=-l}^{l}}x_{i}v_{i}\otimes\,v_{-i}\right)+x_{l}(\al+l)v_{l+1}\otimes\,v_{-l}.$$

The proof of (2) is similar.
\end{proof}

\begin{lemm}\label{L1.5e}
If $D(L_{0})=0$, we have
\begin{equation}\label{1.6ccc}
 (i+m-m\al)a_{m,i+m}-(i-m+m\be)a_{m,i}=(i-m+m\al)a_{-m,i-m}-(i+m-m\be)a_{-m,i}
\end{equation}
for all $m,i\in\bZ$.
\end{lemm}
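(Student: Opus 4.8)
The plan is to extract the identity directly from the 1-cocycle condition by feeding it the single pair $(m,-m)$. The decisive structural fact is that $[L_m,L_{-m}]=2mL_0$, so the cocycle relation $D([x,y])=x\cdot D(y)-y\cdot D(x)$ specializes to
$$
2m\,D(L_0)=L_m\cdot D(L_{-m})-L_{-m}\cdot D(L_m).
$$
Under the hypothesis $D(L_0)=0$, the left-hand side vanishes, and we are reduced to the symmetric relation $L_m\cdot D(L_{-m})=L_{-m}\cdot D(L_m)$. Both sides live in the degree-zero component $(\mathcal{F}_\al\ot\mathcal{F}_\be)_0=\bigoplus_i\bC\,v_i\ot v_{-i}$, so the whole statement will follow by comparing the coefficient of $v_i\ot v_{-i}$ on each side.

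The execution is then a direct expansion using the two expressions $D(L_m)=\sum_i a_{m,i}v_i\ot v_{m-i}$ and $D(L_{-m})=\sum_i a_{-m,i}v_i\ot v_{-m-i}$, together with the module action $L_r\cdot(v_i\ot v_j)=-(i+\al r)v_{r+i}\ot v_j-(j+\be r)v_i\ot v_{r+j}$. Applying $L_{-m}$ to $D(L_m)$ produces, for each $i$, a term $-(i-\al m)v_{i-m}\ot v_{m-i}$ and a term $-(m-i-\be m)v_i\ot v_{-i}$; applying $L_m$ to $D(L_{-m})$ produces $-(i+\al m)v_{m+i}\ot v_{-m-i}$ and $-(-m-i+\be m)v_i\ot v_{-i}$. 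I would then read off the coefficient of the fixed basis vector $v_i\ot v_{-i}$ on each side: on the side $L_{-m}\cdot D(L_m)$ the contributing terms come from the index $i{+}m$ (via the first summand) and from $i$ (via the second), giving $-(i+m-m\al)a_{m,i+m}+(i-m+m\be)a_{m,i}$, while on the side $L_m\cdot D(L_{-m})$ the contributions come from $i{-}m$ and $i$, giving $-(i-m+m\al)a_{-m,i-m}+(i+m-m\be)a_{-m,i}$. Equating these and rearranging yields exactly (\ref{1.6ccc}).

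I expect no conceptual difficulty here; the only place demanding care is the re-indexing when collecting the coefficient of $v_i\ot v_{-i}$, since the shifted basis vectors $v_{i\pm m}\ot v_{\mp(i\pm m)}$ must be matched against the target index by the substitutions $i\mapsto i+m$ and $i\mapsto i-m$ respectively, and a single sign slip in the factors $(i\pm m\mp m\al)$ or $(i\mp m\pm m\be)$ would corrupt the final identity. Keeping the first and second tensor slots strictly separated throughout the bookkeeping is the main safeguard, and once the coefficients are aligned the asserted equation for all $m,i\in\bZ$ drops out immediately.
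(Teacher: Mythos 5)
Your proposal is correct and follows exactly the paper's own argument: apply $D$ to $[L_m,L_{-m}]=2mL_0$, use $D(L_0)=0$ to reduce to $L_m\cdot D(L_{-m})=L_{-m}\cdot D(L_m)$, and compare coefficients of $v_i\otimes v_{-i}$. The coefficient bookkeeping you describe is accurate and yields (\ref{1.6ccc}) precisely.
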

\begin{proof}
If $D(L_{0})=0$, applying $D$ on $[L_{m}, L_{-m}]=2mL_{0}$, we have
$
L_{m}\cdot D(L_{-m})=L_{-m}\cdot D(L_{m})
$
for $m\in\bZ$. It follows that
\begin{equation*}
 (i+m-m\al)a_{m,i+m}-(i-m+m\be)a_{m,i}=(i-m+m\al)a_{-m,i-m}-(i+m-m\be)a_{-m,i}, \quad\forall m, i\in\bZ.
\end{equation*}
\end{proof}

\begin{lemm}\label{L1.5ee}
For all $i\in\bZ$, we have
\begin{equation}\label{1.6c}
 (i-2+2\al)a_{-1,i-2}-(i+1-2\be)a_{-1,i}+3a_{1,i}=(i+1-\al)a_{2,i+1}-(i-2+\be)a_{2,i},
\end{equation}
\begin{equation}\label{1.6cc}
 (i+2-2\al)a_{1,i+2}-(i-1+2\be)a_{1,i}-3a_{-1,i}=(i-1+\al)a_{-2,i-1}-(i+2-\be)a_{-2,i}.
\end{equation}
\end{lemm}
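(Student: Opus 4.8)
The plan is to derive the two displayed identities \eqref{1.6c} and \eqref{1.6cc} in the same spirit as Lemma \ref{L1.5e}: apply the cocycle condition $D([x,y])=x\cdot D(y)-y\cdot D(x)$ to suitable brackets among $L_{-2},L_{-1},L_1,L_2$ and then read off coefficients. Since $D(L_0)=0$ is not assumed here, I expect these identities to involve only the nonzero-degree generators, so the brackets to use must have $L_0$-free right-hand sides. The natural choices are $[L_1,L_{-2}]=3L_{-1}$ (which should produce \eqref{1.6c}) and $[L_{-1},L_2]=-3L_1$, equivalently $[L_2,L_{-1}]=3L_1$ (which should produce \eqref{1.6cc}). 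The factor $3$ appearing in both equations strongly signals these are the right brackets.

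First I would write out, for a single generator $L_m$ with $m\neq 0$, the action of another generator on $D(L_m)=\sum_i a_{m,i}\,v_i\otimes v_{m-i}$. Using the module structure
$$
L_n\cdot(v_i\otimes v_{m-i})=-(i+\al n)v_{n+i}\otimes v_{m-i}-(m-i+\be n)v_i\otimes v_{m+n-i},
$$
I would collect the coefficient of the basis vector $v_i\otimes v_{(m+n)-i}$ in $L_n\cdot D(L_m)$, which comes from two incoming terms (the one raising the first slot from $v_{i-n}$ and the one raising the second slot from $v_{i}$). Concretely the coefficient of $v_i\otimes v_{N-i}$ in $L_n\cdot D(L_m)$ (where $N=m+n$) is $-(i-n+\al n)a_{m,i-n}-(N-i+\be n)a_{m,i}$. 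Having this single bookkeeping formula in hand makes both computations mechanical.

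For \eqref{1.6c} I would apply $D$ to $[L_1,L_{-2}]=3L_{-1}$, giving $3D(L_{-1})=L_1\cdot D(L_{-2})-L_{-2}\cdot D(L_1)$, then extract the coefficient of $v_i\otimes v_{-1-i}$ on both sides. The left side contributes $3a_{-1,i}$; the right side contributes two expressions of the bookkeeping form, one with $(m,n)=(-2,1)$ and one with $(m,n)=(1,-2)$. After substituting and rearranging so that the $a_{\pm1}$ terms sit on the left and the $a_{2}$ terms on the right, this should match \eqref{1.6c} exactly; similarly, applying $D$ to $[L_2,L_{-1}]=3L_1$ and reading off the coefficient of $v_i\otimes v_{1-i}$ should yield \eqref{1.6cc}. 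I do not expect a genuine conceptual obstacle—both identities are forced by the cocycle relation—so the only real hazard is index bookkeeping: correctly tracking which summation index in $L_n\cdot D(L_m)$ lands on the target basis vector $v_i\otimes v_{N-i}$, and keeping the signs in the tensor-density action straight. I would double-check each coefficient by verifying the degree (the target vector must lie in $(\mathcal{F}_\al\otimes\mathcal{F}_\be)_{m+n}$) and by confirming that specializing to the $D(L_0)=0$ consistency already encoded in Lemma \ref{L1.5e} is compatible with the $m=1,2$ instances of \eqref{1.6ccc}.
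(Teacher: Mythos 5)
Your overall strategy is exactly the paper's: apply the cocycle identity to $[L_{-1},L_2]=-3L_1$ and $[L_1,L_{-2}]=3L_{-1}$ and compare coefficients of a fixed basis vector. However, two concrete slips would derail the verification as written. First, you have the bracket-to-identity assignment reversed: equation \eqref{1.6c} contains the coefficients $a_{2,\cdot}$, so it must come from the relation involving $D(L_2)$, namely $-3D(L_1)=L_{-1}\cdot D(L_2)-L_2\cdot D(L_{-1})$ from $[L_{-1},L_2]=-3L_1$; dually, \eqref{1.6cc} contains $a_{-2,\cdot}$ and comes from $3D(L_{-1})=L_1\cdot D(L_{-2})-L_{-2}\cdot D(L_1)$. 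Your claim that applying $D$ to $[L_1,L_{-2}]=3L_{-1}$ puts ``the $a_2$ terms on the right'' cannot happen, since $D(L_2)$ never enters that relation.

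Second, your bookkeeping formula is incorrect in its second term. From the action you correctly quote, the coefficient of $v_i\otimes v_{m+n-i}$ in $L_n\cdot D(L_m)$ is
\begin{equation*}
-(i-n+\al n)\,a_{m,i-n}-(m-i+\be n)\,a_{m,i},
\end{equation*}
whereas you wrote $-(N-i+\be n)a_{m,i}$ with $N=m+n$, i.e.\ $m+n-i$ in place of $m-i$: the relevant index is the second slot of the \emph{source} vector $v_i\otimes v_{m-i}$, not of the target. Used mechanically, this produces (for instance) $-(i-1+\be)a_{2,i}$ where \eqref{1.6c} requires $-(i-2+\be)a_{2,i}$, so the claimed match with the displayed identities would fail. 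Both slips are mechanical rather than conceptual; once they are corrected the computation goes through and coincides with the paper's proof.
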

\begin{proof}
Applying $D$ on  $[L_{-1}, L_{2}]=-3L_{1}$, we have
$$L_{2}\cdot\left(\sum a_{-1,i}v_{i}\otimes v_{-i-1}\right)-3\sum a_{1,j}v_{j}\otimes v_{-j+1}=L_{-1}\cdot\left(\sum a_{2,k}v_{k}\otimes v_{-k+2}\right).$$
Therefore
\begin{equation*}
 (i-2+2\al)a_{-1,i-2}-(i+1-2\be)a_{-1,i}+3a_{1,i}=(i+1-\al)a_{2,i+1}-(i-2+\be)a_{2,i}, \quad\, i\in\bZ.
\end{equation*}

Applying $D$ on  $[L_{1}, L_{-2}]=3L_{-1}$, we have
$$L_{-2}\cdot\left(\sum a_{1,i}v_{i}\otimes v_{-i+1}\right)+3\sum a_{-1,j}v_{j}\otimes v_{-j-1}=L_{1}\cdot\left(\sum a_{-2,k}v_{k}\otimes v_{-k-2}\right).$$
It follows that
\begin{equation*}
 (i+2-2\al)a_{1,i+2}-(i-1+2\be)a_{1,i}-3a_{-1,i}=(i-1+\al)a_{-2,i-1}-(i+2-\be)a_{-2,i}, \quad\, i\in\bZ.
\end{equation*}
\end{proof}

\section{Construction of 1-cocycles  with coefficients in $\mathcal{F}_{\al}\ot \mathcal{F}_{\be}$}\label{section 3}

In this section, we shall construct several non-trivial
1-cocycles of $\mathcal{W}$ with coefficients in $\mathcal{F}_{\al}\ot \mathcal{F}_{\be}$.

\begin{prop}\label{P1.6--}
For any $\be\in\C$, we define a linear operator
 $\d_{1-\be, \be}: \W\to \mathcal{F}_{1-\be}\ot \mathcal{F}_{\be}$ as follows:
$$\d_{1-\be, \be}(L_{n})=
\begin{cases}
\sum\limits_{i=0}^{n-1}(i-n\be)v_{i}\otimes v_{-i+n},&n\geq 1;\\
\qquad\, 0,&n=0;\\
-\sum\limits_{i=n}^{-1}(i-n\be)v_{i}\otimes v_{-i+n},&n\leq -1.
\end{cases}
$$
Then $\d_{1-\be, \be}$ is a non-trivial 1-cocycle of $\W$ with coefficients in $\mathcal{F}_{1-\be}\ot \mathcal{F}_{\be}$.

\end{prop}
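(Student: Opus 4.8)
The plan is to establish two things: that $\d := \d_{1-\be,\be}$ (note that here $\al = 1-\be$) satisfies the cocycle identity $\d([L_m,L_n]) = L_m\cdot\d(L_n) - L_n\cdot\d(L_m)$ for all $m,n\in\bZ$, and that it is not a $1$-coboundary. First I would record that $\d$ is homogeneous of degree $0$: every term of $\d(L_n)$ has the form $v_i\ot v_{n-i}$, so $\d(L_n)\in(\mathcal{F}_{1-\be}\ot\mathcal{F}_\be)_n$ and thus $\d\in\Der(\W,\mathcal{F}_{1-\be}\ot\mathcal{F}_\be)_0$ in the sense of Lemma \ref{L1.1}. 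This homogeneity is also what will make the non-triviality argument clean.

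For the cocycle identity, set $F(x,y) = x\cdot\d(y) - y\cdot\d(x) - \d([x,y])$, an antisymmetric bilinear map; the goal is $F\equiv 0$. I would first reduce this to a finite check. Since $\W$ is generated as a Lie algebra by $\{L_{1},L_{-1},L_{2},L_{-2}\}$ (indeed $L_1,L_2$ generate $\bigoplus_{n\ge 1}\bC L_n$ via $[L_1,L_n]=(1-n)L_{n+1}$, symmetrically $L_{-1},L_{-2}$ generate $\bigoplus_{n\le -1}\bC L_n$, and $L_0=\tfrac12[L_1,L_{-1}]$), it suffices to verify $F(L_m,L_n)=0$ for $m\in\{1,-1,2,-2\}$ and all $n\in\bZ$. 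This rests on the standard propagation identity: a direct Jacobi computation shows that if $F(a,z)=0$ and $F(b,z)=0$ for all $z$, then $F([a,b],z)=0$ for all $z$; hence $F(s,\cdot)\equiv 0$ on a generating set $\{s\}$ forces $F\equiv 0$ by antisymmetry and induction on bracket length. (A symmetry $L_n\mapsto -L_{-n}$ could in principle halve this labor by relating the $m=-1,-2$ checks to $m=1,2$, but I would not rely on it.)

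The substance is then the explicit verification for $m\in\{\pm1,\pm2\}$, which I expect to be the main obstacle. For each such $m$ I would expand $L_m\cdot\d(L_n)$ and $L_n\cdot\d(L_m)$ using $L_m\cdot(v_i\ot v_j) = -(i+(1-\be)m)v_{m+i}\ot v_j - (j+\be m)v_i\ot v_{m+j}$ together with the explicit formula for $\d$, and match the result against $(m-n)\d(L_{m+n})$ coefficient by coefficient. The difficulty is purely organizational: the summand coefficient $i-n\be$ produces near-telescoping sums, and one must carefully track the endpoints of the summation ranges ($i=0,\dots,n-1$ for $n\ge 1$ and $i=n,\dots,-1$ for $n\le -1$) together with the sign change in the definition of $\d$, so that the boundary terms and the transition cases line up — in particular $n=-m$, where $\d(L_{m+n})=\d(L_0)=0$, and the small values of $|m+n|$ where the range of $\d(L_{m+n})$ switches sign. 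The case $m=1$ (and, by antisymmetry, $n=1$) is the model computation; the other generators are analogous but need separate bookkeeping.

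Finally, for non-triviality, suppose $\d=d_u$ for some $u\in\mathcal{F}_{1-\be}\ot\mathcal{F}_\be$. Decomposing $u=\sum_k u_k$ into graded components and comparing degree-$0$ parts (each $d_{u_k}$ is homogeneous of degree $k$, since $d_{u_k}(L_n)=L_n\cdot u_k\in(\mathcal{F}_{1-\be}\ot\mathcal{F}_\be)_{n+k}$), I may assume $u=u_0=\sum_{|i|\le N}c_i\,v_i\ot v_{-i}$ is supported in $|i|\le N$. Then $d_{u_0}(L_n)=L_n\cdot u_0$ is supported, in its first tensor index, on $[-N,N]\cup[n-N,n+N]$. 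But for $n\ge 1$ the coefficient of $v_i\ot v_{n-i}$ in $\d(L_n)$ is $i-n\be$, which is nonzero for every $i\in\{0,\dots,n-1\}$ except possibly the single value $i=n\be$. Choosing $n>2N+1$ with $n\be\ne N+1$, the coefficient $N+1-n\be$ of $v_{N+1}\ot v_{n-N-1}$ is nonzero, yet the index $N+1$ lies outside $[-N,N]\cup[n-N,n+N]$ — a contradiction. Hence $\d_{1-\be,\be}$ represents a non-trivial class.
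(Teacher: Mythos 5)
Your proposal is correct, but the non-triviality argument is genuinely different from the paper's. For the cocycle identity both you and the paper leave the computation at the level of a verification: the paper simply asserts it is ``straightforward to check,'' while you give a sound reduction to the generators $L_{\pm 1},L_{\pm 2}$ via the standard propagation identity (the set of $x$ with $F(x,\cdot)\equiv 0$ is a subalgebra), which the paper itself also exploits elsewhere (in its Lemma on $D(L_{-1})=0$); either way the finite check is not written out. The real divergence is in showing $\d_{1-\be,\be}$ is not a coboundary. The paper restricts to $n=1$, where $\d_{1-\be,\be}(L_1)=-\be\, v_0\otimes v_1$, and runs a case analysis on whether $\be\in\bZ$ and on the endpoints $p,q$ of the support of the putative potential $\sum_{i=p}^q a_i v_i\otimes v_{-i}$; in several subcases the $L_1$ comparison is inconclusive and the paper must return to other $L_n$ (e.g.\ $L_\be$) to finish. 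Your argument instead uses the grading to reduce to a degree-zero potential $u_0$ supported in $|i|\le N$, observes that $L_n\cdot u_0$ has first-index support in $[-N,N]\cup[n-N,n+N]$, and picks $n>2N+1$ with $n\be\ne N+1$ so that the coefficient $N+1-n\be$ of $v_{N+1}\otimes v_{n-N-1}$ in $\d_{1-\be,\be}(L_n)$ is nonzero while the corresponding coefficient of $L_n\cdot u_0$ vanishes. I checked the index bookkeeping and it is correct ($N+1\le n-1$ and $N+1<n-N$ both follow from $n\ge 2N+2$). This support argument is uniform in $\be$, avoids the paper's subcases entirely, and is arguably the cleaner proof; what the paper's approach buys in exchange is explicit information at $L_1$ of the kind it reuses in the classification sections.
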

\begin{proof}

It is straightforward to check that $\d_{1-\be, \be}\in \Der(\W,\mathcal{F}_{1-\be}\ot \mathcal{F}_{\be})$ . Assume $\d_{1-\be, \be}$ is a 1-coboundary. Then
$$\d_{1-\be, \be}(L_{n})=L_{n}\cdot\left(\sum\limits_{i=p}^{q}a_{i}v_{i}\otimes v_{-i}\right)=\sum\limits_{i=p}^{q}a_{i}\Big((i-n\be)v_{i}\otimes v_{-i+n}-(i+n-n\be)v_{i+n}\otimes v_{-i}\Big)$$
for some $a_{i}\in\bC$, where $a_{p}, a_{q}\in\bC^{*}$ and $p\leq q$. Since $\d_{1-\be, \be}(L_{1})=-\be v_{0}\otimes v_{1}$, we have
$$-\be v_{0}\otimes v_{1}=(p-\be)a_{p}v_{p}\otimes v_{-p+1}+\sum\limits_{i=p+1}^{q}(i-\be)[a_{i}-a_{i-1}]v_{i}\otimes v_{-i+1}-(q+1-\be)a_{q}v_{q+1}\otimes v_{-q}.$$

{\bf Case 1: $\be\notin\bZ$.} If $p\neq 0$, then $(p-\be)a_{p}=0$ implies that $a_p=0$ or $p=\be$. However, since we know that $a_p\neq 0$, it follows that $p=\be$, which is impossible. Similarly, if $p=0$ and $q>0$, then $(q+1-\be)a_{q}=0$, which is also impossible. Therefore, we must have $p=q=0$. In this case,
$$
-\beta v_0 \otimes v_1 = (-\beta a_0) v_0 \otimes v_1 - (1-\beta a_0) v_1 \otimes v_0,
$$
which is again impossible. Hence, $\d_{1-\be,\be}$ is a non-trivial 1-cocycle.

{\bf Case 2:\ $ \be\in\bZ$.}

{\bf Subcase 2.1: $p\neq 0$.}
Then $(p-\be)a_{p}=0$. Since $a_{p}\neq 0$, we have $p=\be$.

If $q=p=\beta$, then we see that $-\beta v_{0}\otimes v_{1}=-a_{\beta}v_{\beta+1}\otimes v_{-\beta}$. Therefore, $\beta=-1$ and $a_{\beta}=\beta=-1$. It is clear that
$$L_n \cdot (-v_{-1}\otimes v_1) \neq \sum\limits_{i=0}^{n-1}(i-n\beta)v_i \otimes v_{-i+n}, \quad n\geq 1.$$
Thus, we conclude that $\d_{1-\be,\be}=\d_{2,-1}$ is a non-trivial 1-cocycle.

If $q>p=\beta$, then we express $-\beta v_0 \otimes v_1$ as follows:
$$-\beta v_{0}\otimes v_{1}=\sum\limits_{i=\beta+1}^{q}(i-\beta)(a_{i}-a_{i-1})v_{i}\otimes v_{-i+1}-(q+1-\beta)a_qv_{q+1}\otimes v_{-q}.$$
Since $(q+1-\be)a_q\neq 0$, it follows that $q=-1$ and $a_i=-1$ for $i=\be,\cdots, -1$. This implies that $\be\leq-1$. Furthermore,
$$L_{n}\cdot\left(-\sum\limits_{i=\be}^{-1}v_{i}\otimes v_{-i}\right)=-\sum\limits_{i=\be}^{-1}(i-n\be)v_{i}\otimes v_{-i+n}+\sum\limits_{i=\be}^{-1}(i+n-n\be)v_{i+n}\otimes v_{-i}.$$
However, while
$$L_{\be}\cdot\left(-\sum\limits_{i=\be}^{-1}v_{i}\otimes v_{-i}\right)\neq-\sum\limits_{i=\be}^{-1}(i-\be^{2})v_{i}\otimes v_{-i+\be},$$
it is impossible.

{\bf Subcase 2.2: $p=0$.}

If $p=0$ and $q>0$, we have $(q+1-\be)a_{q}=0$. Since $a_{q}\neq 0$, we have $q=\be-1$, which implies that $\be>1$ and $a_{i}=1$ for $i=0, \cdots, \be-1$. Then
$$\d_{1-\be, \be}(L_{n})=L_{n}\cdot\left(\sum\limits_{i=0}^{\be-1}v_{i}\otimes v_{-i}\right)=\sum\limits_{i=0}^{\be-1}(i-n\be)v_{i}\otimes v_{-i+n}-\sum\limits_{i=0}^{\be-1}(i+n-n\be)v_{i+n}\otimes v_{-i}.$$
While
$L_{\be}\cdot\left(\sum\limits_{i=0}^{\be-1}v_{i}\otimes v_{-i}\right)\neq
\sum\limits_{i=0}^{\be-1}(i-\be^{2})v_{i}\otimes v_{-i+\be},$
 it is impossible.

If $p=q=0$, we have
$$-\be v_{0}\otimes v_{1}=(-\be)a_{0}v_{0}\otimes v_{1}-(1-\be)a_{0}v_{1}\otimes v_{0}.$$
Then $\be=a_{0}=\be a_{0}.$ So $\be=a_{0}=0$ or  $\be=a_{0}=1$. Since $a_{0}\neq 0$, we have
$\be=a_{0}=1$. It is easy to see that $L_{n}\cdot(v_{0}\otimes v_{0})=-n v_{0}\otimes v_{n}\neq \d_{0, 1}(L_{n})$ for all $n\in\bZ$.
Hence $\d_{1-\be,\be}=\d_{0, 1}$ is a non-trivial 1-cocycle.

The proof is complete.
\end{proof}

\begin{prop}\label{P3.0}
The following linear operators $\d_{0,0}^i:\W\to \mathcal{F}_{0}\ot \mathcal{F}_{0} (i=1,\cdots 5)$ are non-trivial 1-cocycles of $\W$ with coefficients in $\mathcal{F}_{0}\ot \mathcal{F}_{0}$:
\begin{eqnarray*}
   && \d_{0,0}^{1}(L_{m})=v_{0}\otimes\,v_{m},\quad
   \d_{0,0}^{2}(L_{m})=mv_{0}\otimes\,v_{m},\\
   && \d_{0,0}^{3}(L_{m})=v_{m}\otimes\,v_{0},\quad \d_{0,0}^{4}(L_{m})=mv_{m}\otimes\,v_{0},\\
   & & \d_{0,0}^{5}(L_{n})=\begin{cases} v_{0}\otimes\,v_{0},& n=0;\\
   0,&n=1;\\
   -\sum\limits_{i=1}^{n-1}v_{i}\otimes\,v_{n-i}, &n\geq 2;\\
   \sum\limits_{i=n}^{0}v_{i}\otimes\,v_{n-i}, & n\leq -1
   \end{cases}
  \end{eqnarray*}
for $m,n\in\bZ$.
Furthermore, $\d_{0,0}^{1}, \d_{0,0}^{2},\d_{0,0}^{3},\d_{0,0}^{4},\d_{0,0}^{5}$ are linearly independent.
\end{prop}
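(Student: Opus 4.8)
The plan is to establish three things in turn: that each $\d_{0,0}^{i}$ satisfies the cocycle identity $D([L_m,L_n])=L_m\cdot D(L_n)-L_n\cdot D(L_m)$, that none is a $1$-coboundary, and that the five classes are independent in ${\rm H}^1(\W,\mathcal{F}_0\ot\mathcal{F}_0)$. Throughout I use the explicit action with $\al=\be=0$, namely $L_m\cdot(v_i\ot v_j)=-i\,v_{m+i}\ot v_j-j\,v_i\ot v_{m+j}$, together with the observation that all five operators are homogeneous of degree $0$, i.e. $\d_{0,0}^{i}(L_m)\in(\mathcal{F}_0\ot\mathcal{F}_0)_m$.

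For the cocycle property, $\d_{0,0}^{1}$ and $\d_{0,0}^{2}$ are handled by a direct two-line computation: for instance $L_m\cdot(v_0\ot v_n)=-n\,v_0\ot v_{m+n}$ gives $L_m\cdot\d_{0,0}^{1}(L_n)-L_n\cdot\d_{0,0}^{1}(L_m)=(m-n)v_0\ot v_{m+n}=\d_{0,0}^{1}([L_m,L_n])$, and the weighted version $\d_{0,0}^{2}$ is identical up to an extra factor. To avoid repeating the work for $\d_{0,0}^{3}$ and $\d_{0,0}^{4}$, I would invoke the swap map $\sigma\colon v_i\ot v_j\mapsto v_j\ot v_i$, which is a $\W$-module automorphism of $\mathcal{F}_0\ot\mathcal{F}_0$ (the two tensor factors coincide and the action is symmetric); since $\d_{0,0}^{3}=\sigma\circ\d_{0,0}^{1}$ and $\d_{0,0}^{4}=\sigma\circ\d_{0,0}^{2}$, these are cocycles as soon as $\d_{0,0}^{1},\d_{0,0}^{2}$ are.

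The genuinely laborious point, and the main obstacle, is verifying the cocycle identity for $\d_{0,0}^{5}$, whose piecewise definition forces a case analysis according to the signs of $m$, $n$ and $m+n$. The diagonal relation $[L_m,L_{-m}]=2mL_0$ must reproduce the anomalous value $\d_{0,0}^{5}(2mL_0)=2m\,v_0\ot v_0$, and in the mixed-sign cases $m\ge 1$, $n\le -1$ the two truncated sums defining $\d_{0,0}^{5}(L_n)$ and $\d_{0,0}^{5}(L_m)$ must telescope against each other with exactly the right boundary terms; this is where almost all of the bookkeeping lives. I would cut the labour by using that $\sigma$ fixes $\d_{0,0}^{5}$ and that the identity is antisymmetric in $(m,n)$, collapsing the both-negative cases onto the both-positive ones, and then treating separately the pure positive case, the $L_0$ relation, and the three mixed-sign regimes $m+n>0$, $m+n=0$, $m+n<0$, checking in each that the identity becomes a telescoping cancellation.

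For non-triviality and independence I would treat all five at once, which is cleaner and yields the stronger conclusion needed for the lower bound $\dim{\rm H}^1\ge 5$. By Lemma \ref{L1.1} it suffices to consider a degree-$0$ coboundary, i.e. $\d_v$ with $v=\sum_k a_k\,v_k\ot v_{-k}$ a finitely supported element of $(\mathcal{F}_0\ot\mathcal{F}_0)_0$; a short computation gives $\d_v(L_m)=\sum_k k\,a_k\,(v_k\ot v_{m-k}-v_{m+k}\ot v_{-k})$, so the coefficient of $v_j\ot v_{m-j}$ equals $j\,a_j-(j-m)\,a_{j-m}$. Suppose $\sum_{i=1}^{5} c_i\,\d_{0,0}^{i}=\d_v$. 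Comparing coefficients for $m\ge 2$ yields $m\,a_{-m}=c_1+mc_2$, $m\,a_m=c_3+mc_4$, and $j\,a_j-(j-m)\,a_{j-m}=-c_5$ for $1\le j\le m-1$. Since $v$ has finite support, letting $m\to\infty$ in the first two relations forces $c_2=c_1=0$ and $c_4=c_3=0$; then, fixing $j\ge 1$ and letting $m\to\infty$ in the third gives $j\,a_j=-c_5$, and letting $j\to\infty$ forces $c_5=0$. Hence all $c_i=0$, so the five classes are linearly independent in ${\rm H}^1$, and in particular each $\d_{0,0}^{i}$ is non-trivial. The weaker linear independence of the operators themselves is immediate by evaluating at $L_1$ and $L_2$.
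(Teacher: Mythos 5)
Your proposal is correct in substance but organizes the argument quite differently from the paper, and in one respect proves more. The paper verifies the cocycle identity for $\d_{0,0}^{1},\dots,\d_{0,0}^{4}$ and $\d_{0,0}^{5}$ by direct (unrecorded) computation, then establishes non-triviality of $\d_{0,0}^{5}$ alone by assuming $\d_{0,0}^{5}=L_{n}\cdot\bigl(\sum_{i=p}^{q}a_{i}v_{i}\otimes v_{-i}\bigr)$, evaluating at $L_{1}$ where $\d_{0,0}^{5}(L_{1})=0$ to force $ia_{i}=0$ for all $i$ and hence $p=q=0$, and reaching a contradiction; linear independence is then checked only at the level of operators by evaluating at $L_{1},L_{2}$. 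You instead prove the single stronger statement that no nonzero combination $\sum c_{i}\d_{0,0}^{i}$ is a coboundary, by reducing to a degree-zero potential $v=\sum_k a_k v_k\otimes v_{-k}$ and comparing the coefficient $ja_j-(j-m)a_{j-m}$ of $v_j\otimes v_{m-j}$ against the five cocycles at $j=0$, $j=m$, and $1\le j\le m-1$, then letting $m$ (and $j$) grow to exploit finite support. This is cleaner, simultaneously yields non-triviality of each class, their independence as operators, and the cohomological independence needed for the lower bound $\dim\mathrm{H}^{1}\ge 5$, and your check is correct. Your use of the swap automorphism $\sigma$ to deduce the cocycle property of $\d_{0,0}^{3},\d_{0,0}^{4}$ from that of $\d_{0,0}^{1},\d_{0,0}^{2}$ is also a legitimate shortcut the paper does not use.

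One caveat on the $\d_{0,0}^{5}$ verification, which both you and the paper leave as a computation: your proposed labour-saving device does not quite work as stated. The swap $\sigma$ fixes $\d_{0,0}^{5}$ but acts within each degree, so it cannot relate the formula for $L_{n}$ with $n\le -1$ to the one for $n\ge 2$; and antisymmetry of the cocycle identity in $(m,n)$ only halves the case list, it does not collapse the both-negative case onto the both-positive one. The natural candidate symmetry $L_{m}\mapsto -L_{-m}$, $v_{i}\otimes v_{j}\mapsto v_{-i}\otimes v_{-j}$ carries $\d_{0,0}^{5}$ to $-\d_{0,0}^{5}+\d_{0,0}^{1}+\d_{0,0}^{3}$ rather than to $\pm\d_{0,0}^{5}$ (the truncated sums differ by the boundary terms $v_{0}\otimes v_{n}+v_{n}\otimes v_{0}$), so if you want to use it you must carry this correction, which is harmless since $\d_{0,0}^{1},\d_{0,0}^{3}$ are already known to be cocycles. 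With that adjustment, or simply by doing the sign cases directly as you outline, the verification goes through; this is an organizational slip, not a gap in the mathematics.
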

\begin{proof}
It is easy to verify that  $\d_{0,0}^i$ (for $i=1,2,3,4$) defined above are non-trivial 1-cocycles of $\W$ in $\mathcal{F}_{0}\ot \mathcal{F}_{0}$. Next, we shall show that $\d_{0,0}^{5}$ is a non-trivial 1-cocycle. It is straightforward to verify $\d_{0,0}\in {\rm Der}(\W, \mathcal{F}_{0}\ot \mathcal{F}_{0})$.  Assume $\d_{0,0}^{5}$ is a 1-coboundary. It follows that
$$\d_{0,0}^{5}(L_{n})=L_{n}\cdot\left(\sum\limits_{i=p}^{q}a_{i}v_{i}\otimes v_{-i}\right)=\sum\limits_{i=p}^{q}ia_{i}\Big(v_{i}\otimes v_{-i+n}-v_{i+n}\otimes v_{-i}\Big)$$
for some $a_{i}\in\bC$, where $a_{p}, a_{q}\in\bC^{*}$ and $p\leq q$. In particular,
$$\d_{0,0}^{5}(L_{1})=pa_{p}v_{p}\otimes v_{-p+1}+\sum\limits_{i=p+1}^{q}\Big(ia_{i}-(i-1)a_{i-1}\Big)v_{i}\otimes v_{-i+1}-qa_{q}v_{q+1}\otimes v_{-q}.$$
Note that $\d_{0,0}^{5}(L_{1})=0$. Then $pa_{p}=qa_{q}=0, ia_{i}=(i-1)a_{i-1} $ for $ i=p+1,\cdots, q.$
That is, $ia_{i}=0$ for all $p\leq i\leq  q$. So $p=q=0$. While $\d_{0,0}^{5}(L_{n})=L_{n}\cdot(a_{0}v_{0}\otimes v_{0})=0$, we have
$\d_{0,0}^{5}(L_{n})=0$, which is a contradiction. Hence $\d_{0,0}^{5}$ is a non-trivial 1-cocycle.

Let $x_{1}\d_{0,0}^{1}+x_{2} \d_{0,0}^{2}+x_{3}\d_{0,0}^{3}+x_{4}\d_{0,0}^{4}+x_{5}\d_{0,0}^{5}=0$. By the both sides acting on $L_{1}, L_{2}$ respectively, we deduce $x_{i}=0, i=1,2,3,4,5$. So  $\d_{0,0}^{1}, \d_{0,0}^{2},\d_{0,0}^{3},\d_{0,0}^{4},\d_{0,0}^{5}$ are linearly independent.
\end{proof}

\begin{prop}\label{P3.1}

For $(\al,\be)=(0,1),(1,0),(0,2), (2,0), (1,1)$, the following linear operators $d_{\al,\be}:\W\to \mathcal{F}_{\al}\ot \mathcal{F}_{\be}$ are non-trivial 1-cocycles of $\W$:
\begin{eqnarray*}
                       \d_{0,1}^{1}(L_{n})&=&n^{2} v_{0}\otimes v_{n},\quad \d_{1,0}^{1}(L_{n})=n^{2} v_{n}\otimes\,v_{0},\\
\d_{0,2}(L_{n})&=&n^{3} v_{0}\otimes v_{n},\quad
 \d_{2,0}(L_{n})=n^{3} v_{n}\otimes v_{0},  \\
    \d_{1,1}(L_{n})&=&\begin{cases}
     \sum\limits_{i=1}^{n}i(n-i) v_{i}\otimes v_{n-i},&n\geq 1;\\
\quad\,     0,&n=0;\\
       -\sum\limits_{i=-1}^{n}i(n-i) v_{i}\otimes v_{n-i},& n\leq -1
   \end{cases}
\end{eqnarray*}
for $n\in \bZ$.  Moreover, $\d_{0,1}^{1}, \d_{0,1}$ are linearly independent, and $\d_{1,0}^{1}, \d_{1,0}$ are linearly independent.
\end{prop}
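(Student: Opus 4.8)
The plan is to handle the five operators in two families---the four ``monomial'' cocycles $\d_{0,1}^{1},\d_{1,0}^{1},\d_{0,2},\d_{2,0}$ and the piecewise cocycle $\d_{1,1}$---verifying for each the derivation property and then non-triviality, and finally settling the two independence assertions.

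First I would confirm the cocycle identity $d([L_m,L_n])=L_m\cdot d(L_n)-L_n\cdot d(L_m)$ by direct substitution, using $[L_m,L_n]=(m-n)L_{m+n}$ and the action on $\mathcal{F}_{\al}\ot\mathcal{F}_{\be}$. For the monomial cases this is a one-line check: e.g. in $\mathcal{F}_{0}\ot\mathcal{F}_{1}$ one gets $L_m\cdot \d_{0,1}^{1}(L_n)-L_n\cdot \d_{0,1}^{1}(L_m)=-(m+n)(n^{2}-m^{2})\,v_{0}\ot v_{m+n}=(m-n)(m+n)^{2}\,v_{0}\ot v_{m+n}=\d_{0,1}^{1}([L_m,L_n])$, while the analogous computation for $\d_{0,2}$ in $\mathcal{F}_{0}\ot\mathcal{F}_{2}$ produces $(m-n)(m+n)^{3}v_0\ot v_{m+n}$; the two ``first-factor'' cocycles $\d_{1,0}^{1},\d_{2,0}$ follow by interchanging the tensor factors. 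For $\d_{1,1}$, which is defined by three branches, I would verify the identity by splitting into the sign cases of $m$, $n$ and $m+n$ and checking that the polynomial weight $i(n-i)$ makes the boundary terms of the sums telescope across the $n\ge 1$, $n=0$ and $n\le -1$ pieces.

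For non-triviality I would use a single leading-coefficient argument. Each operator is homogeneous of degree $0$, so any inner derivation agreeing with it must have the form $d_v$ with $v=\sum_{i=p}^{q}a_i\,v_i\ot v_{-i}\in(\mathcal{F}_{\al}\ot\mathcal{F}_{\be})_0$ of finite support. Computing $d_v(L_n)=L_n\cdot v$ and reading off the coefficient of $v_k\ot v_{n-k}$ yields an expression combining a ``diagonal'' term in $a_k$ with a ``shifted'' term in $a_{k-n}$. Choosing $n$ larger than the support width $q-p$ isolates indices $k$ with $q<k<n+p$, where $a_k=a_{k-n}=0$ and hence $d_v(L_n)$ vanishes in that slot; for $\d_{1,1}$ the target coefficient there equals $k(n-k)\ne0$, an immediate contradiction, and for the monomial cocycles the same device forces $a_i=0$ for $i\ne0$, after which $d_v$ can only deliver a coefficient that is linear in $n$ in the surviving slot $v_0\ot v_n$ (namely $-na_0$), never $n^{2}$ or $n^{3}$. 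Thus none of the five operators is a coboundary.

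Finally, for the independence of $\d_{0,1}^{1}$ and $\d_{0,1}$ I would recall that $\d_{0,1}=\d_{1-\be,\be}$ with $\be=1$ from Proposition~\ref{P1.6--}, so $\d_{0,1}(L_n)=\sum_{i=0}^{n-1}(i-n)v_i\ot v_{n-i}$ for $n\ge 1$. Suppose $x\,\d_{0,1}^{1}+y\,\d_{0,1}=d_v$; comparing, for large $n$, the coefficient of $v_k\ot v_{n-k}$ at an index $\max(q,0)<k\le n-1$ gives $0$ on the coboundary side, no contribution from $\d_{0,1}^{1}$, and $y(k-n)\ne0$ from $\d_{0,1}$, forcing $y=0$; then $x\,\d_{0,1}^{1}=d_v$ contradicts the non-triviality of $\d_{0,1}^{1}$ unless $x=0$. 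The pair $\d_{1,0}^{1},\d_{1,0}$ is identical after swapping the two tensor factors. The only genuinely delicate step is the piecewise verification for $\d_{1,1}$: matching the three branches over all sign combinations of $m$ and $n$, and confirming that the window $1\le k\le n-1$ carrying its nonzero coefficients can never be reproduced by a finitely supported $v$.
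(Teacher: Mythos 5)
Your proposal is correct and follows essentially the same route as the paper: direct verification of the cocycle identity, followed by a coefficient comparison against a putative coboundary $d_v$ with $v=\sum_{i=p}^{q}a_i\,v_i\ot v_{-i}$ of finite support. The only differences are cosmetic --- the paper derives the contradiction for $\d_{1,1}$ by evaluating at $L_1$ (where $\d_{1,1}(L_1)=0$ forces $(p-1)a_p=(q+1)a_q=0$, hence $p=1>q=-1$) rather than your large-$n$ vanishing window, and it leaves the non-triviality of the four monomial cocycles and the two linear-independence assertions to the reader, which you supply explicitly.
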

\begin{proof}
We directly verify that the linear operators ${\rm d}_{1,0},{\rm d}_{0,1},{\rm d}_{2,0},{\rm d}_{0,2}$ defined above are non-trivial 1-cocycles. Next, we shall show that  $\d_{1,1}$ is a non-trivial 1-cocycle. Assume $\d_{1,1}$ is a 1-coboundary, then
$$\d_{1,1}(L_{n})=L_{n}\cdot\left(\sum\limits_{i=p}^{q}a_{i}v_{i}\otimes v_{-i}\right)=\sum\limits_{i=p}^{q}a_{i}\Big((i-n)v_{i}\otimes v_{-i+n}-(i+n)v_{i+n}\otimes v_{-i}\Big)$$
for some $a_{i}\in\bC$, where $a_{p}, a_{q}\in\bC^{*}$ and $p\leq q$. In particular,
$$\d_{1,1}(L_{1})=(p-1)a_{p}v_{p}\otimes v_{-p+1}+\sum\limits_{i=p+1}^{q}\Big((i-1)a_{i}-ia_{i-1}\Big)v_{i}\otimes v_{-i+1}-(q+1)a_{q}v_{q+1}\otimes v_{-q}.$$
Notice that $\d_{1,1}(L_{1})=0$. It follows that $(p-1)a_{p}=(q+1)a_{q}=0$. So $p=1>q=-1$, which is a contradiction. Then $\d_{1,1}$ is a non-trivial 1-cocycle.
\end{proof}


\section{Classification of 1-cocycles for $(\al,\be)\notin\bZ^2$}\label{section 4}

In this section, we shall compute ${\rm H}^1(\W, \mathcal{F}_{\al}\ot \mathcal{F}_{\be})$  for $(\al,\be)\notin\bZ^2$.

Suppose $\be\notin\bZ$ without loss of generality.  By Lemma \ref{L1.2}-Lemma \ref{L1.3}, it suffices to consider $D\in\Der(\W, \mathcal{F}_{\al}\ot \mathcal{F}_{\be})_{0}$ and $D(L_0)=0$. Suppose
\begin{eqnarray*}
D(L_{n}) =\sum\limits_{i\in\Z} a_{n,i}v_i\!\otimes\!v_{n-i}, \quad\forall\,n\in\bZ^{*},
\end{eqnarray*}
where $a_{n, i}\in\C$.
According to  Lemma \ref{L1.4}, replace $D$
by $D-D_{v}$, where $D_{v}$ is a 1-coboundary induced by  $v=\sum v_{i}\otimes v_{-i}$.
We denote by
$$l=\max\{\,|i\,|\,\big|\,a_{1,i}\ne0\}.$$
For $\be\notin\bZ$,  we safely suppose that
\begin{equation}\label{1.12}
D(L_1)=a_{1,l+1}v_{l+1}\otimes\,v_{-l}.
\end{equation}

For the convenience of readers, we provide a  scheme of our calculation in this section:

\begin{itemize}
    \item If $D(L_{-1})=0$, we have $D=0$.
    \item If $D(L_{-1})\neq 0$. Set
$$p_{-1}=\min\{i\in\bZ\mid a_{-1,i}\neq 0\}, \quad q_{-1}=\max\{i\in\bZ\mid a_{-1,i}\neq 0\}.$$
By discussing the values of $p_{-1}$ and $q_{-1}$, we shall show that
$
 D\in  \Inn(\W,\mathcal{F}_{\al}\ot \mathcal{F}_{\be})_{0}\oplus \bC \de_{\al+\be,1}\d_{\al, \be}.
$

\end{itemize}

Let $m=1$ in (\ref{1.6ccc}). Then
\begin{eqnarray}
(l+1-\al)a_{1,l+1}&=&(l-1+\al)a_{-1,l-1}-(l+1-\be)a_{-1,l},    \label{1.13f1}\\
-(l+\be)a_{1,l+1}&=&(l+\al)a_{-1,l}-(l+2-\be)a_{-1,l+1},\label{1.13f2}\\
 (i-1+\al)a_{-1,i-1}&=&(i+1-\be)a_{-1,i}, \quad\quad\,i\neq l,l+1. \label{1.13f3}
\end{eqnarray}

\begin{lemm}\label{L1.10}
If $\be\notin\bZ$ and $D(L_{-1})=0$, we have $D=0$.
\end{lemm}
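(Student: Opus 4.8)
The plan is to use the normalization $D(L_{-1})=0$ together with $\be\notin\bZ$ to force $D$ to vanish on a generating set of $\W$, and then to propagate this vanishing to all of $\W$ via the cocycle identity. Throughout I keep the notation $D(L_n)=\sum_i a_{n,i}v_i\ot v_{n-i}$ and the normalization (\ref{1.12}), so that $a_{-1,i}=0$ for all $i$.

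\emph{First, I would show $D(L_1)=0$.} Since $a_{-1,i}=0$, the three relations (\ref{1.13f1})--(\ref{1.13f3}) obtained by setting $m=1$ in (\ref{1.6ccc}) degenerate; in particular (\ref{1.13f2}) becomes $-(l+\be)a_{1,l+1}=0$. Because $l\in\bZ$ and $\be\notin\bZ$, we have $l+\be\ne0$, whence $a_{1,l+1}=0$, and by (\ref{1.12}) this gives $D(L_1)=0$. Next I would establish $D(L_2)=0$ and $D(L_{-2})=0$. With $a_{1,i}=a_{-1,i}=0$ for all $i$, the identities (\ref{1.6c}) and (\ref{1.6cc}) of Lemma \ref{L1.5ee} collapse to the two-term recursions
$$(i+1-\al)a_{2,i+1}=(i-2+\be)a_{2,i},\qquad (i-1+\al)a_{-2,i-1}=(i+2-\be)a_{-2,i},\quad i\in\bZ.$$
If $D(L_2)\ne0$, let $q=\max\{i\mid a_{2,i}\ne0\}$; evaluating the first recursion at $i=q$ and using $a_{2,q+1}=0$ gives $(q-2+\be)a_{2,q}=0$, forcing $\be=2-q\in\bZ$, a contradiction. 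Applying the same argument to the minimal index of $a_{-2,\cdot}$ in the second recursion yields $D(L_{-2})=0$.

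\emph{Finally, I would propagate.} Now $D$ vanishes on $L_{-2},L_{-1},L_0,L_1,L_2$, which generate $\W$. Applying the cocycle condition to $[L_1,L_n]=(1-n)L_{n+1}$ and using $D(L_1)=0$ gives $(1-n)D(L_{n+1})=L_1\cdot D(L_n)$; since $1-n\ne0$ for $n\ge2$, an induction starting from $D(L_2)=0$ yields $D(L_n)=0$ for all $n\ge0$. Symmetrically, $[L_{-1},L_n]=(-1-n)L_{n-1}$ together with $D(L_{-1})=0$ gives $(-1-n)D(L_{n-1})=L_{-1}\cdot D(L_n)$, and an induction starting from $D(L_{-2})=0$ yields $D(L_n)=0$ for all $n\le0$. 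Hence $D=0$.

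The main obstacle is the step producing $D(L_{\pm2})=0$: here everything hinges on the observation that the reduced recursions admit no nonzero finitely supported solution, which is exactly where the hypothesis $\be\notin\bZ$ enters (the extreme-index coefficient $i\mp2\pm\be$ cannot vanish for an integer index). Steps one and three are routine once this is in place, though one must be careful that the factors $1-n$ and $-1-n$ governing the induction are nonzero precisely in the ranges $n\ge2$ and $n\le-2$ where they are used.
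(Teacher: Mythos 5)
Your proposal is correct and follows essentially the same route as the paper: kill $a_{1,l+1}$ via (\ref{1.13f2}) using $l+\be\neq 0$, then use the collapsed two-term recursions from (\ref{1.6c}) and (\ref{1.6cc}) together with finite support and $\be\notin\bZ$ to force $D(L_{\pm 2})=0$, and finally propagate to all of $\W$. The only cosmetic difference is that you carry out the propagation by explicit induction on $[L_{\pm 1},L_n]$, whereas the paper simply invokes that $\W$ is generated by $\{L_{\pm 1},L_{\pm 2}\}$; both are fine.
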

\begin{proof}
If $\be\notin\bZ$ and $D(L_{-1})=0$,
then we have $a_{1,l+1}=0$ from (\ref{1.13f2}), that is, $D(L_{1})=0$. Using (\ref{1.6c}), we have
$
 (i-2+\be)a_{2,i}=(i+1-\al)a_{2,i+1}.
$
Since $\{a_{2,i}\mid a_{2,i}\neq 0, i\in\bZ\}$ is finite and $\be\notin\bZ$,  we have $a_{2,i}=0$ for all $i\in\bZ$.
Then $D(L_{2})=0$. Using (\ref{1.6cc}), we have
$  (i-1+\al)a_{-2,i-1}=(i+2-\be)a_{-2,i}.
$
Since $\{a_{-2,i}\neq 0\}$ is finite and $\be\notin\bZ$,  we have $a_{-2,i}=0$ for all $i\in\bZ$.
Then $D(L_{-2})=0$. Since the Witt algebra $\W$ can be generated by
$\{L_{\pm 1}, L_{\pm 2}\}$, we have $D(L_{m})=0$ for all $m\in\bZ$. Then $D=0$.
\end{proof}

Next we  assume $D(L_{-1})\neq 0$. Set
$$p_{-1}=\min\{i\in\bZ\mid a_{-1,i}\neq 0\}, \quad q_{-1}=\max\{i\in\bZ\mid a_{-1,i}\neq 0\}.$$

\begin{lemm}\label{L1.5}
If $\be\notin\bZ$, we have $p_{-1}=l$ or $p_{-1}=l+1.$
Moreover, if $q_{-1}\neq l-1,l$, then $q_{-1}=-\al.$
\end{lemm}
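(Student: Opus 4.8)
The plan is to extract both statements directly from the recurrence (\ref{1.13f3}), which is valid at every index $i \neq l, l+1$, using crucially that $\be \notin \bZ$ makes the factor $i+1-\be$ on its right-hand side nonzero for every integer $i$. Intuitively, this nonvanishing is what forbids a nonzero coefficient $a_{-1,i}$ from appearing next to a vanishing one, so the extremal indices $p_{-1}$ and $q_{-1}$ are forced into the narrow range where the recurrence breaks down, namely near $i=l,l+1$.

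First I would handle $p_{-1}$ by contradiction. Suppose $p_{-1} \notin \{l, l+1\}$, so that (\ref{1.13f3}) applies at $i = p_{-1}$, giving
$$(p_{-1}-1+\al)\,a_{-1,p_{-1}-1} = (p_{-1}+1-\be)\,a_{-1,p_{-1}}.$$
By minimality of $p_{-1}$ we have $a_{-1,p_{-1}-1}=0$, so the left side vanishes; but the right side is the product of $p_{-1}+1-\be \neq 0$ (here $\be \notin \bZ$) and $a_{-1,p_{-1}} \neq 0$, hence nonzero, a contradiction. Therefore $p_{-1}=l$ or $p_{-1}=l+1$.

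For $q_{-1}$ I would run the same argument one index higher. The hypothesis $q_{-1} \neq l-1, l$ is exactly what guarantees $q_{-1}+1 \neq l, l+1$, so (\ref{1.13f3}) may be invoked at $i = q_{-1}+1$:
$$(q_{-1}+\al)\,a_{-1,q_{-1}} = (q_{-1}+2-\be)\,a_{-1,q_{-1}+1}.$$
Maximality of $q_{-1}$ forces $a_{-1,q_{-1}+1}=0$, so the right side vanishes, and since $a_{-1,q_{-1}} \neq 0$ we conclude $q_{-1}+\al = 0$, i.e. $q_{-1} = -\al$.

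I do not expect a genuine obstacle; the only thing to watch is the bookkeeping of where (\ref{1.13f3}) is legitimate, since it fails precisely at $i = l, l+1$. For $p_{-1}$ this costs nothing, because those two excluded indices coincide exactly with the two permitted conclusions. For $q_{-1}$ the one-step shift turns them into the excluded values $q_{-1}=l-1$ and $q_{-1}=l$, which explains the proviso in the statement. Equations (\ref{1.13f1})--(\ref{1.13f2}), which record the actual behaviour at the junction $i=l,l+1$, play no role here and are instead reserved for the ensuing case analysis on these boundary values.
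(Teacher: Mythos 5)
Your proposal is correct and matches the paper's own proof essentially line for line: both arguments evaluate the recurrence (\ref{1.13f3}) at $i=p_{-1}$ (using $\be\notin\bZ$ to force $p_{-1}+1-\be\neq 0$) and at $i=q_{-1}+1$ (using maximality to kill the right-hand side and conclude $q_{-1}=-\al$). Your closing remark about why the excluded indices shift from $\{l,l+1\}$ to $\{l-1,l\}$ for $q_{-1}$ is exactly the bookkeeping the paper performs implicitly.
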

\begin{proof}
Assume $p_{-1}\neq l,l+1$. Let $i=p_{-1}$ in (\ref{1.13f3}). From $a_{-1,p_{-1}-1}=0$, we have $a_{-1,p_{-1}}(\be-p_{-1}-1)=0.$
Since $\be\notin\bZ$, we have $a_{-1,p_{-1}}=0$, a contradiction. Therefore $p_{-1}=l $ or $ p_{-1}=l+1.$
Moreover, suppose $q_{-1}\neq l-1,l$, that is, $q_{-1}+1\neq l,l+1$. Let $i=q_{-1}+1$ in (\ref{1.13f3}). Then $ a_{-1,q_{-1}}(\al+q_{-1})=0$. It follows that $q_{-1}=-\al,$ which implies that  $\al\in\bZ$ and $-\al\neq l-1,l$.
\end{proof}

From Lemma \ref{L1.5}, we have the following result.
\begin{lemm}\label{L1.5+1}
If $\be\notin\bZ$ and $\al\notin\bZ$, we have $p_{-1}=q_{-1}=l.$
\end{lemm}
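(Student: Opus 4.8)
The plan is to obtain the result purely as a consequence of Lemma \ref{L1.5}, by combining the separate constraints it places on $p_{-1}$ and on $q_{-1}$ and then discarding the incompatible combinations. No new computation with the cocycle equations is needed; the entire content is an elementary incompatibility of two index ranges.

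First I would pin down $p_{-1}$. Since $\be\notin\bZ$ by hypothesis, the first assertion of Lemma \ref{L1.5} applies verbatim and gives $p_{-1}=l$ or $p_{-1}=l+1$. Next I would constrain $q_{-1}$ using the second assertion of Lemma \ref{L1.5}, which says that whenever $q_{-1}\neq l-1,l$ one must have $q_{-1}=-\al$, and hence $\al\in\bZ$. The key observation is that the present hypothesis is precisely $\al\notin\bZ$: since $q_{-1}$ is an integer while $-\al\notin\bZ$, the equality $q_{-1}=-\al$ is impossible, so the premise $q_{-1}\neq l-1,l$ must fail. This forces $q_{-1}=l-1$ or $q_{-1}=l$.

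Finally I would combine these with the trivial inequality $p_{-1}\leq q_{-1}$, which holds because $D(L_{-1})\neq 0$ makes the index set $\{i\in\bZ\mid a_{-1,i}\neq 0\}$ nonempty. From $p_{-1}\in\{l,l+1\}$ and $q_{-1}\in\{l-1,l\}$, the only pair compatible with $p_{-1}\leq q_{-1}$ is $p_{-1}=q_{-1}=l$: indeed $p_{-1}=l+1$ would force $q_{-1}\geq l+1$, contradicting $q_{-1}\leq l$, so $p_{-1}=l$, and then $p_{-1}\leq q_{-1}\leq l$ yields $q_{-1}=l$. As both required constraints are already supplied by Lemma \ref{L1.5}, there is no genuine obstacle here; the only step demanding any care is recognizing that the hypothesis $\al\notin\bZ$ is exactly what eliminates the exceptional value $q_{-1}=-\al$ left open by the earlier lemma.
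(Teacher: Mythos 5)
Your proposal is correct and is exactly the argument the paper intends: the paper simply states ``From Lemma \ref{L1.5}, we have the following result'' and leaves the combination of the two constraints implicit, while you spell out that $\al\notin\bZ$ rules out $q_{-1}=-\al$, forcing $q_{-1}\in\{l-1,l\}$, which together with $p_{-1}\in\{l,l+1\}$ and $p_{-1}\le q_{-1}$ yields $p_{-1}=q_{-1}=l$. No difference in approach; you have just made the omitted bookkeeping explicit.
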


\begin{lemm}\label{L1.6}
If $\be\notin\bZ$ and $p_{-1}=q_{-1}=l$, we have
$
 D\in  \Inn(\W,\mathcal{F}_{\al}\ot \mathcal{F}_{\be})_{0}\oplus \bC \de_{\al+\be,1}\d_{\al, \be}.
$
\end{lemm}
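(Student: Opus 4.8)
The plan is to show that under the hypotheses $\be\notin\bZ$ and $p_{-1}=q_{-1}=l$, the derivation $D$ is forced, up to a $1$-coboundary, into a one-dimensional space spanned by $\d_{\al,\be}$ when $\al+\be=1$ (and into $\Inn$ otherwise). The key starting point is that $p_{-1}=q_{-1}=l$ means $D(L_{-1})=a_{-1,l}\,v_l\otimes v_{-1-l}$ is supported at the single index $l$, while by the normalization (\ref{1.12}) we have $D(L_1)=a_{1,l+1}\,v_{l+1}\otimes v_{-l}$ supported at the single index $l+1$. First I would feed these two single-term expressions into the three relations (\ref{1.13f1})--(\ref{1.13f3}). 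Since $a_{-1,l-1}=a_{-1,l+1}=0$, equations (\ref{1.13f1}) and (\ref{1.13f2}) collapse to
\begin{align*}
(l+1-\al)\,a_{1,l+1}&=-(l+1-\be)\,a_{-1,l},\\
-(l+\be)\,a_{1,l+1}&=(l+\al)\,a_{-1,l},
\end{align*}
which is a homogeneous $2\times 2$ linear system in $(a_{1,l+1},a_{-1,l})$. A nontrivial solution exists precisely when the determinant vanishes, and I expect this determinant to factor as a multiple of $(\al+\be-1)$ times something nonvanishing for $\be\notin\bZ$; this is exactly the mechanism that isolates the locus $\al+\be=1$.

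Next I would determine $D$ on the generators $L_{\pm 2}$, since $\W$ is generated by $\{L_{\pm1},L_{\pm2}\}$ and knowing $D$ there determines $D$ entirely. Using (\ref{1.6c}) and (\ref{1.6cc}) from Lemma \ref{L1.5ee}, together with the now-explicit single-term forms of $D(L_{\pm1})$, I would propagate the support: each recursion $(i-1+\al)a_{-1,i-1}=(i+1-\be)a_{-1,i}$-type relation pins down $a_{2,i}$ and $a_{-2,i}$ up to finitely many coefficients, and the finiteness of the support combined with $\be\notin\bZ$ kills all but the terms dictated by the $\d_{\al,\be}$ pattern. The reader should note the structural parallel with the proof of Lemma \ref{L1.10}, where the same finiteness-plus-irrationality argument forced $a_{2,i}=a_{-2,i}=0$; here the inhomogeneous right-hand sides coming from nonzero $D(L_{\pm1})$ instead produce the specific cocycle $\d_{\al,\be}$.

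The final step is to identify the surviving one-parameter family explicitly with $\d_{\al,\be}$ of Proposition \ref{P1.6--} (available when $\al+\be=1$), and to check that when $\al+\be\neq 1$ the only solution is $a_{1,l+1}=a_{-1,l}=0$, which by Lemma \ref{L1.10} forces $D=0\in\Inn$. Concretely, after subtracting the appropriate scalar multiple $c\,\d_{\al,\be}$ I would verify that the remaining derivation has $D(L_{-1})=0$ and invoke Lemma \ref{L1.10} to conclude it is trivial; this yields the claimed decomposition $D\in\Inn(\W,\mathcal{F}_\al\ot\mathcal{F}_\be)_0\oplus\bC\,\de_{\al+\be,1}\d_{\al,\be}$.

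I expect the main obstacle to be the bookkeeping in the second step: translating the single-term data at $L_{\pm1}$ through the coupled relations (\ref{1.6c})--(\ref{1.6cc}) to recover the full coefficient array $a_{\pm2,i}$ and confirm it matches $\d_{\al,\be}$ rather than some spurious extra solution. The delicate point is ensuring that the inhomogeneous recursions do not introduce additional free parameters; this hinges precisely on $\be\notin\bZ$ guaranteeing that the coefficients $\be\pm i$ never vanish, so that each recursion step is uniquely solvable and the finite-support constraint forces termination exactly where $\d_{\al,\be}$ terminates.
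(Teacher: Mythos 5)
There is a genuine gap in your first step, and it is precisely where the hard part of the paper's proof lives. The determinant of your $2\times 2$ homogeneous system from (\ref{1.13f1})--(\ref{1.13f2}) is
$(l+1-\al)(l+\al)-(l+1-\be)(l+\be)=(\be-\al)(\al+\be-1)$,
so it is \emph{not} of the form $(\al+\be-1)$ times a factor that is nonvanishing for $\be\notin\bZ$: the other factor is $\be-\al$, which vanishes on the whole line $\al=\be$, including non-integral values. Consequently the determinant argument isolates the union of the two loci $\al+\be=1$ and $\al=\be$, and the case $\al=\be\notin\bZ$ with $\al+\be\neq 1$ is not excluded by your mechanism. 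The paper must treat this case separately (its Case 1): pushing the single-term data for $D(L_{\pm1})$ through (\ref{1.6c}) to determine $a_{2,l+1},a_{2,l+2}$ produces the additional compatibility constraint
$\frac{2\be(1-\be)(1-2\be)}{(l+\be)(l+1-\be)}\,a_{-1,l}=0$,
which for $\be\notin\bZ$ and $a_{-1,l}\neq 0$ forces $\be=\tfrac12$, so that $\al+\be=1$ holds after all and the surviving cocycle is $\d_{1/2,1/2}$. Without this degree-$\pm2$ computation your plan would either leave a spurious one-parameter family for every $\al=\be\notin\bZ$ or silently discard a case that actually contributes to the answer; the statement $D\in\Inn\oplus\bC\de_{\al+\be,1}\d_{\al,\be}$ cannot be reached from the $L_{\pm1}$ relations alone.

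A secondary, smaller inaccuracy: subtracting only a scalar multiple $c\,\d_{\al,\be}$ cannot make the residual derivation vanish on $L_{-1}$, because $\d_{\al,\be}(L_{-1})$ is supported at the index $i=-1$ while $D(L_{-1})$ is supported at $i=l$. As in the paper, one must subtract both $a\,\d_{\al,\be}$ and the inner derivation induced by $-a\sum_{i=0}^{l}v_i\otimes v_{-i}$ (which shifts the support), and then conclude via the explicit formulas for $D(L_n)$ obtained by induction from $[L_k,L_1]=(k-1)L_{k+1}$, rather than via Lemma \ref{L1.10} applied to $D-c\,\d_{\al,\be}$. Aside from these two points, your overall route (the $2\times2$ system, propagation through (\ref{1.6c})--(\ref{1.6cc}), identification with $\d_{\al,\be}$) matches the paper's.
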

\begin{proof}
If $\be\notin\bZ$ and $p_{-1}=q_{-1}=l$, that is, $D(L_{-1})=a_{-1,l}v_{l}\otimes v_{-l-1}  $, by (\ref{1.13f1})-(\ref{1.13f2}), we have
\begin{equation}\label{1.14}
a_{1,l+1}(\al-l-1)=-a_{-1,l}(\be-l-1),
\end{equation}
\begin{equation}\label{1.15}
a_{1,l+1}(\be+l)=-a_{-1,l}(\al+l).
\end{equation}
Since $a_{-1,l}\neq 0$ and $\be\notin\bZ$, we have $a_{1,l+1}\neq0$, $\al\neq l+1$ and
$(\al+l)(\al-l-1)=(\be+l)(\be-l-1).$
So $\al=\be$ or $\al+\be=1$.

{\bf Case 1:  $\al=\be\notin\bZ$.} By (\ref{1.15}), we have $a_{1,l+1}=-a_{-1,l}$,
$D(L_{1})=-a_{-1,l}v_{l+1}\otimes v_{-l}$ and $D(L_{-1})=a_{-1,l}v_{l}\otimes v_{-l-1}$.
Using (\ref{1.6c}), we have
\begin{eqnarray*}
-(l+1-2\be)a_{-1,l}&=&(l+1-\be)a_{2,l+1}-(l-2+\be)a_{2,l},     \\
-3a_{-1,l}&=&(l+2-\be)a_{2,l+2}-(l-1+\be)a_{2,l+1},            \\
 (l+2\be)a_{-1,l}&=&(l+3-\be)a_{2,l+3}-(l+\be)a_{2,l+2},  \\
 (i+1-\be)a_{2,i+1}&=&(i-2+\be)a_{2,i},\quad\quad\,i\neq l,l+1,l+2.
\end{eqnarray*}
Since $\{a_{2,i}\neq 0\}$ is finite and $\be\notin\bZ$,  we have $a_{2,i}=0$ for $i\leq l$ or $i\geq l+3$.
Then
\begin{eqnarray*}
&&-(l+1-2\be)a_{-1,l}=(l+1-\be)a_{2,l+1},\quad -3a_{-1,l}=(l+2-\be)a_{2,l+2}-(l-1+\be)a_{2,l+1},\\
&& (l+2\be)a_{-1,l}=-(l+\be)a_{2,l+2}.
\end{eqnarray*}
It follows that
$$a_{2,l+1}=-\frac{l+1-2\be}{l+1-\be}a_{-1,l},\quad\quad\,a_{2,l+2}=-\frac{l+2\be}{l+\be}a_{-1,l}, $$
$$\left(3+(l-1+\be)\frac{l+1-2\be}{l+1-\be}-(l+2-\be)\frac{l+2\be}{l+\be}\right)a_{-1,l}=0.$$
Since
$$\left(3+(l-1+\be)\frac{l+1-2\be}{l+1-\be}-(l+2-\be)\frac{l+2\be}{l+\be}\right)=-\frac{2\be(1-\be)(1-2\be)}{(l+\be)(l+1-\be)}$$
and $a_{-1,l}\neq 0$, we get $\be=\frac{1}{2}.$
Hence $a_{2,l+1}=-\frac{l}{l+\frac{1}{2}}a_{-1,l},\,a_{2,l+2}=-\frac{l+1}{l+\frac{1}{2}}a_{-1,l}$ and
\begin{eqnarray*}
&&D(L_{2})=-\frac{1}{l+\frac{1}{2}}a_{-1,l}\Big(lv_{l+1}\otimes v_{-l+1}+(l+1)v_{l+2}\otimes v_{-l}\Big).
\end{eqnarray*}
Since $\{a_{-2,i}\neq 0\}$ is finite, using (\ref{1.6cc}),  we have $a_{-2,i}=0$ for $i\neq l-1,l$, and
$$(l+\frac{1}{2})a_{-2,l-1}=la_{-1,l},\quad\,(l+\frac{3}{2})a_{-2,l}-(l-\frac{1}{2})a_{-2,l-1}=3a_{-1,l},\quad\,(l+\frac{1}{2})a_{-2,l}=(l+1)a_{-1,l}. $$
Hence
$a_{-2,l-1}=\frac{l}{l+\frac{1}{2}}a_{-1,l},\, a_{-2,l}=\frac{l+1}{l+\frac{1}{2}}a_{-1,l}.$
Set $a=-\frac{a_{-1,l}}{l+\frac{1}{2}}$. Then
$$D(L_{-1})=-a(l+\frac{1}{2})v_{l}\otimes v_{-l-1},\quad\,D(L_{-2})=-a\Big(l v_{l-1}\otimes v_{-l-1}+(l+1)v_{l}\otimes v_{-l-2}\Big),$$
$$D(L_{1})=a(l+\frac{1}{2})v_{l+1}\otimes v_{-l},\quad\,D(L_{2})=a\Big(l v_{l+1}\otimes v_{-l+1}+(l+1)v_{l+2}\otimes v_{-l}\Big).$$
Applying $D$ on the identities $[L_{k}, L_{1}]=(k-1)L_{k+1}$ and $[L_{-k}, L_{-1}]=(-k+1)L_{-k-1}$  and using induction on $k\in\mathbb{Z}_{+}$,  we  obtain
$$D(L_{n})=a\sum\limits_{i=l+1}^{l+n}(i-\frac{n}{2})v_{i}\otimes v_{-i+n}=L_{n}\cdot\left(-a{\sum\limits_{i=0}^{l}}v_{i}\otimes v_{-i}\right)+a\d_{\frac{1}{2},\frac{1}{2}}(L_{n}), \quad\, n\geq 1;$$
$$D(L_{n})=-a\sum\limits_{i=l+n+1}^{l}(i-\frac{n}{2})v_{i}\otimes v_{-i+n}=L_{n}\cdot\left(-a{\sum\limits_{i=0}^{l}}v_{i}\otimes v_{-i}\right)+a\d_{\frac{1}{2},\frac{1}{2}}(L_{n}), \quad\, n\leq -1.$$
Therefore, $D\in \Inn(\W,\mathcal{F}_{\al}\ot \mathcal{F}_{\be})_{0}\oplus \bC \d_{\frac{1}{2},\frac{1}{2}}$.

{\bf Case 2: $\al=1-\be$.} Then $a_{1,l+1}=-\frac{l+1-\be}{l+\be}a_{-1,l}$ from (\ref{1.15}) and
$D(L_{1})=-\frac{l+1-\be}{l+\be}a_{-1,l}v_{l+1}\otimes v_{-l}.$
Using (\ref{1.6c}), we have
\begin{eqnarray*}
-(l+1-2\be)a_{-1,l}&=&(l+\be)a_{2,l+1}-(l-2+\be)a_{2,l},\\
-3\frac{l+1-\be}{l+\be}a_{-1,l}&=&(l+1+\be)a_{2,l+2}-(l-1+\be)a_{2,l+1},\\
(l+2-2\be)a_{-1,l}&=&(l+2+\be)a_{2,l+3}-(l+\be)a_{2,l+2},\\
(i+\be)a_{2,i+1}&=&(i-2+\be)a_{2,i},\quad\quad\, i\neq l,l+1,l+2.
\end{eqnarray*}
Since $\{a_{2,i}\neq 0\}$ is finite,  $a_{2,i}=0$ for $i\leq l$ or $i\geq l+3$.
Then
\begin{eqnarray*}
&&a_{2,l+1}=-\frac{l+1-2\be}{l+\be}a_{-1,l},\quad a_{2,l+2}=-\frac{l+2-2\be}{l+\be}a_{-1,l},\\
&&D(L_{2})=-\frac{l+1-2\be}{l+\be}a_{-1,l}v_{l+1}\otimes v_{-l+1}-\frac{l+2-2\be}{l+\be}a_{-1,l}v_{l+2}\otimes v_{-l}.
\end{eqnarray*}
Using (\ref{1.6cc}), we have
$$a_{-2,l-1}=\frac{l-1+2\be}{l+\be}a_{-1,l},\quad a_{-2,l}=\frac{l+2\be}{l+\be}a_{-1,l}.$$
Set $a=-\frac{a_{-1,l}}{l+\be}$. Then
\begin{eqnarray*}
   D(L_{-1})&=&-a(l+\be)v_{l}\otimes v_{-l-1},\\
   D(L_{-2})&=&-a\Big((l-1+2\be)v_{l-1}\otimes v_{-l-1}+(l+2\be)v_{l}\otimes v_{-l-2}\Big),\\
   D(L_{1})&=&a(l+1-\be)v_{l+1}\otimes v_{-l},\\
   D(L_{2})&=&a\Big((l+1-2\be)v_{l+1}\otimes v_{-l+1}+(l+2-2\be)v_{l+2}\otimes v_{-l}\Big).
\end{eqnarray*}
Moreover, applying $D$ on the identities $[L_{k}, L_{1}]=(k-1)L_{k+1}$ and $[L_{-k}, L_{-1}]=(-k+1)L_{-k-1}$  and using induction on $k\in\mathbb{Z}_{+}$, we have
$$D(L_{n})=a\sum\limits_{i=l+1}^{l+n}(i-n\be)v_{i}\otimes v_{-i+n}=L_{n}\cdot\left(-a\sum\limits_{i=0}^{l}v_{i}\otimes v_{-i}\right)+a\sum\limits_{i=0}^{n-1}(i-n\be)v_{i}\otimes v_{-i+n}, \quad\, n\geq 1;$$
$$D(L_{n})=-a\sum\limits_{i=l+n+1}^{l}(i-n\be)v_{i}\otimes v_{-i+n}=L_{n}\cdot\left(-a\sum\limits_{i=0}^{l}v_{i}\otimes v_{-i}\right)-a\sum\limits_{i=n}^{-1}(i-n\be)v_{i}\otimes v_{-i+n}, \quad\, n\leq -1.$$
It follows that $D\in \Inn(\W,\mathcal{F}_{\al}\ot \mathcal{F}_{\be})_{0}\oplus \bC \d_{1-\be,\be}$.
\end{proof}

Next we discuss the case of $q_{-1}\geq l+1$. Notice that Lemma \ref{L1.5} implies  $\al=-q_{-1}\in\bZ$.

\begin{lemm}\label{L1.8+1}
If $\be\notin\bZ$, $p_{-1}=l$ and $q_{-1}=l+k_{0}$ for some integer $k_{0}\geq 1$,  then $D$ is a 1-coboundary.
\end{lemm}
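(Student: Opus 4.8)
The plan is to realize $D$ as an inner derivation. First I would extract the arithmetic constraints forced by the hypotheses. Since $q_{-1}=l+k_0\ge l+1$ satisfies $q_{-1}\ne l-1,l$, Lemma~\ref{L1.5} gives $q_{-1}=-\al$, hence $\al=-(l+k_0)\in\bZ$. Because $l=\max\{|i|\mid a_{1,i}\ne0\}\ge 0$ and $k_0\ge 1$, this forces $\al\le -1<0\le l$; in particular $\al<p_{-1}=l$, and moreover $\al+\be\notin\bZ$, so $\al+\be\ne 1$ and $\al\ne\be$. Consequently the term $\de_{\al+\be,1}\d_{\al,\be}$ appearing in the scheme of this section vanishes, and it suffices to exhibit a \emph{finite} element $v$ with $D=D_v$.

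The main step is to construct a $1$-coboundary matching $D$ on $L_{-1}$. Writing $v=\sum_j c_j\,v_j\ot v_{-j}$, the module action gives
\[
L_{-1}\cdot v=\sum_j\big(c_j(j+\be)-c_{j+1}(j+1-\al)\big)\,v_j\ot v_{-j-1},
\]
so I must solve $c_j(j+\be)-c_{j+1}(j+1-\al)=a_{-1,j}$ for all $j$. Since $\be\notin\bZ$, every denominator $j+\be$ is nonzero, so setting $c_j=0$ for $j>l+k_0$ and solving the recursion downward determines all $c_j$ uniquely. The recursion decouples exactly at $j=\al-1$, where $c_{\al-1}$ is fixed by $a_{-1,\al-1}$ alone; and because $\al-1<l=p_{-1}$, the support $[l,l+k_0]$ of $a_{-1,\cdot}$ lies strictly above $\al-1$, so the recursion is homogeneous for $j\le\al-1$ and yields $c_{\al-1}=0$, hence $c_j=0$ for all $j\le\al-1$. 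This is precisely where the inequality $\al<l$ is used: it guarantees that $v$ has finite support (contained in $[\al,\,l+k_0]$), so that $D_v$ is a genuine $1$-coboundary with $D_v(L_{-1})=D(L_{-1})$.

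Finally I would put $\tilde D=D-D_v$. Then $\tilde D\in\Der(\W,\mathcal F_\al\ot\mathcal F_\be)_0$ with $\tilde D(L_0)=0$ (since $L_0\cdot v=0$) and $\tilde D(L_{-1})=0$. Running the computation of Lemma~\ref{L1.10} on $\tilde D$ then finishes the proof: using (\ref{1.6ccc}) at $m=1$ together with $\be\notin\bZ$ and the finiteness of $\tilde D(L_1)$ gives $\tilde D(L_1)=0$, and then (\ref{1.6c}) and (\ref{1.6cc}) force $\tilde D(L_{\pm2})=0$. As $\W$ is generated by $\{L_{\pm1},L_{\pm2}\}$, we conclude $\tilde D=0$, i.e.\ $D=D_v$ is a $1$-coboundary.

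I expect the only genuine obstacle to be the finite-support claim for $v$: one must ensure the downward recursion terminates below the support of $a_{-1,\cdot}$, which rests entirely on the inequality $\al=-(l+k_0)<0\le l$ extracted in the first step, together with $\be\notin\bZ$ ruling out vanishing denominators. The closing reduction to $\tilde D=0$ is then the same bookkeeping as in Lemma~\ref{L1.10}; the one subtlety is that $\tilde D$ need no longer satisfy the normalization (\ref{1.12}), but this causes no difficulty since the identities (\ref{1.6ccc}), (\ref{1.6c}), (\ref{1.6cc}) hold for every degree-zero derivation annihilating $L_0$.
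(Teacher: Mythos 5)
Your proposal is correct, and its skeleton is the same as the paper's: produce a finite degree-zero element $v$ so that $D-D_v$ kills $L_{\pm1}$, then invoke the argument of Lemma~\ref{L1.10}. The execution of the key step is genuinely different, though, and somewhat leaner. The paper matches $D$ on \emph{both} $L_1$ and $L_{-1}$ by writing down explicit product formulas for coefficients $x_{l+1},\dots,x_{l+k_0}$ supported on $[l+1,l+k_0]$ and verifying separately that $L_1\cdot v=D(L_1)$ and $L_{-1}\cdot v=D(L_{-1})$ (the latter via the identities \eqref{1.90}--\eqref{1.92}); this is where most of its computation lives. You instead solve the single triangular system $c_j(j+\be)-c_{j+1}(j+1-\al)=a_{-1,j}$ downward from $j>l+k_0$, using $\be\notin\bZ$ to keep all denominators nonzero, and you get finiteness of the support for free from the decoupling at $j=\al-1$ together with $\al-1<0\le l=p_{-1}$ (which follows correctly from Lemma~\ref{L1.5} giving $\al=-(l+k_0)$ and $l\ge 0$). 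Matching only $L_{-1}$ suffices because, as you note, once $\tilde D(L_0)=\tilde D(L_{-1})=0$ the relation \eqref{1.6ccc} at $m=1$ reads $(i+1-\al)\tilde a_{1,i+1}=(i-1+\be)\tilde a_{1,i}$, and evaluating at the maximal index of the (finite) support of $\tilde D(L_1)$ forces $\tilde D(L_1)=0$ since $\be\notin\bZ$; you are also right that the remaining steps of Lemma~\ref{L1.10} use only \eqref{1.6c}, \eqref{1.6cc} and generation by $L_{\pm1},L_{\pm2}$, not the normalization \eqref{1.12}, so rerunning them on $\tilde D$ is legitimate. What the paper's version buys is explicit closed-form coefficients (and the smaller support $[l+1,l+k_0]$, which your $v$ in fact also has, since $L_{-1}\cdot$ is injective on finitely supported degree-zero vectors when $\be\notin\bZ$); what yours buys is avoiding the consistency check between the two linear systems for $L_1$ and $L_{-1}$.
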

\begin{proof}
If $\be\notin\bZ$, $p_{-1}=l$ and $q_{-1}=l+k_{0}\geq l+1$,  we have
$$D(L_{-1})=a_{-1,l}v_{l}\otimes v_{-l-1}+\cdots+a_{-1,l+k_{0}}v_{l+k_{0}}\otimes v_{-l-k_{0}-1},$$
$\al=-q_{-1}=-l-k_{0}$ by  { Lemma \ref{L1.4e1}} and
\begin{eqnarray*}
& &(2l+k_{0}+1)a_{1,l+1}=-(l+1-\be)a_{-1,l},\quad\,(l+\be)a_{1,l+1}=k_{0}a_{-1,l}+(l+2-\be)a_{-1,l+1},\\
& &(l+k_{0}+1-i)a_{-1,i-1}=-(i+1-\be)a_{-1,i}, \quad\quad\,i\neq l, l+1
\end{eqnarray*}
from (\ref{1.13f1})-(\ref{1.13f3}). Since $\be\notin\bZ$ and $a_{-1,l}\neq 0$, it is easy to see that
$2l+k_{0}+1\neq 0$ and
\begin{equation}\label{1.90}
a_{1,l+1}{=}-\frac{a_{-1,l}}{2l+k_{0}+1}(l+1-\be),
\end{equation}
\begin{equation}\label{1.91}
a_{-1,l+1}=-\frac{a_{-1,l}}{2l+k_{0}+1}\frac{(l+k_{0}+\be)(l+k_{0}+1-\be)}{l+2-\be},
\end{equation}
\begin{equation*}
a_{-1,i}=-\frac{l+k_{0}+1-i}{i+1-\be}a_{-1,i-1}, \quad\quad\,i\geq l+2.
\end{equation*}
Then
\begin{equation}\label{1.92}
a_{-1,l+i}=(-1)^{i-1}\left(\prod\limits_{t=1}^{i-1}\frac{k_{0}-t}{l+t+2-\be}\right)a_{-1,l+1}, \quad\quad\, 2\le i\leq k_{0}.
\end{equation}
Set $a=-\frac{a_{-1,l}}{2l+k_{0}+1}$. Then
$D(L_{1})=L_{1}\cdot\left(\sum\limits_{i=l+1}^{l+k_{0}}x_{i}v_{i}\otimes v_{-i}\right)$,
where
\begin{equation*}
x_{l+1}=a,\quad\,{x_{l+i}}=(-1)^{i-1}\left(\prod\limits_{t=2}^{i}\frac{k_{0}-t+1}{l+t-\be}\right)a, \quad\quad\, 2\le i\leq k_{0}.
\end{equation*}
In fact, $x_{l+1}, \cdots\, x_{l+k_{0}}$ satisfy the following relations:
\begin{equation*}
x_{i}=-\frac{l+k_{0}+1-i}{i-\be}x_{i-1},\quad\quad\, i=l+2,\cdots, l+k_{0}.
\end{equation*}
Moreover, we have
$$a_{1,l+1}=a(l+1-\be),\,a_{-1,l}=-a(2l+k_{0}+1),\,\, a_{-1,l+i}=\frac{(l+k_{0}+\be)(l+k_{0}+1-\be)}{l+i+1-\be}x_{l+i},\,\, 1\leq i\leq k_{0}.$$
It follows that $D(L_{-1})=L_{-1}\cdot \left(\sum\limits_{i=l+1}^{l+k_{0}}x_{i}v_{i}\otimes v_{-i}\right)$.
Denote by $D_{0}=D-D_{inn}$, where $D_{inn}(L_{n})=L_{n}\cdot\left(\sum\limits_{i=l+1}^{l+k_{0}}x_{i}v_{i}\otimes v_{-i}\right)$.
Then $D_{0}(L_{\pm 1})=0$.
By Lemma \ref{L1.10}, we have $D_{0}=0$, which implies that $D$ is a 1-coboundary.
\end{proof}

\begin{lemm}\label{L1.9}
If $\be\notin\bZ$, $p_{-1}=l+1$, and $q_{-1}=l+k_{0}$, where $k_{0}\geq 1$,  then $D$ is a 1-coboundary.
\end{lemm}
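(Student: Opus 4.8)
The plan is to reproduce the mechanism of Lemma~\ref{L1.8+1}: produce an explicit 1-coboundary that agrees with $D$ on $L_{\pm1}$, and then kill the remainder with Lemma~\ref{L1.10}. The decisive preliminary step, however, is to determine $\al$ exactly. Since $q_{-1}=l+k_{0}\ge l+1$ differs from both $l-1$ and $l$, Lemma~\ref{L1.5} gives $\al=-q_{-1}=-l-k_{0}\in\bZ$, so $l+1-\al=2l+k_{0}+1$. Because $p_{-1}=l+1$ we have $a_{-1,l-1}=a_{-1,l}=0$, whence (\ref{1.13f1}) reduces to $(2l+k_{0}+1)\,a_{1,l+1}=0$ and (\ref{1.13f2}) to $(l+\be)\,a_{1,l+1}=(l+2-\be)\,a_{-1,l+1}$. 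If $2l+k_{0}+1\ne0$, the first identity forces $a_{1,l+1}=0$, and then the second, together with $\be\notin\bZ$ (so $l+2-\be\ne0$), forces $a_{-1,l+1}=0$, contradicting $p_{-1}=l+1$. Hence necessarily $2l+k_{0}+1=0$, i.e. $\al=l+1$ (and $k_{0}=-2l-1$, $l\le-1$). Isolating this single value of $\al$ is what renders the case manageable.

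With $\al=l+1$ fixed, I would first solve the recursion. For $l+2\le i\le l+k_{0}$ relation (\ref{1.13f3}) becomes $(i+l)\,a_{-1,i-1}=(i+1-\be)\,a_{-1,i}$; as $\be\notin\bZ$ makes every factor $i+1-\be$ nonzero, this writes each $a_{-1,i}$ as an explicit scalar multiple of $a_{-1,l+1}$, while (\ref{1.13f2}) recovers $a_{1,l+1}$ from $a_{-1,l+1}$. Following the template of Lemma~\ref{L1.8+1}, I then search for
\[
v=\sum_{i=l+1}^{l+k_{0}}x_{i}\,v_{i}\ot v_{-i},
\]
and compute the action via the module structure: the coefficient of $v_{j}\ot v_{-1-j}$ in $L_{-1}\cdot v$ equals $x_{j}(j+\be)-x_{j+1}(j+1-\al)$. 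Imposing this to equal $a_{-1,j}$ for $l+1\le j\le l+k_{0}$ gives a triangular system whose diagonal entries $j+\be$ are nonzero (again by $\be\notin\bZ$), so $x_{l+1},\dots,x_{l+k_{0}}$ are uniquely determined. The only further constraint, at $j=l$, reads $-x_{l+1}(l+1-\al)=0$, and this holds automatically precisely because $\al=l+1$; thus $L_{-1}\cdot v=D(L_{-1})$.

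The remaining and genuinely computational point---the main obstacle---is to verify that the \emph{same} $v$ satisfies $L_{1}\cdot v=D(L_{1})$, since the $x_{i}$ are no longer free. Here the coefficient of $v_{j}\ot v_{1-j}$ in $L_{1}\cdot v$ is $x_{j}(j-\be)-x_{j-1}(j-1+\al)$. The top index $j=l+k_{0}+1$ causes no trouble, as its value is $-x_{l+k_{0}}(l+k_{0}+\al)=0$ because $l+k_{0}+\al=2l+k_{0}+1=0$; the vanishing at $l+2\le j\le l+k_{0}$ and the prescribed value $a_{1,l+1}$ at $j=l+1$ then follow by substituting the explicit $x_{i}$ (equivalently, the recursion (\ref{1.13f3})) together with the link (\ref{1.13f2}) between $a_{1,l+1}$ and $a_{-1,l+1}$, exactly as in the verification carried out in Lemma~\ref{L1.8+1}. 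Once this is checked, I set $D_{0}=D-D_{v}$, where $D_{v}(x)=x\cdot v$ is the 1-coboundary attached to $v$; then $D_{0}(L_{0})=0$ and $D_{0}(L_{\pm1})=0$, so Lemma~\ref{L1.10} (applicable since $\be\notin\bZ$) yields $D_{0}=0$. Therefore $D=D_{v}\in\Inn(\W,\mathcal{F}_{\al}\ot\mathcal{F}_{\be})$ is a 1-coboundary, as claimed.
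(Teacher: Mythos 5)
Your proposal is correct and follows essentially the same route as the paper: Lemma \ref{L1.5} gives $\al=-l-k_0$, the relations (\ref{1.13f1})--(\ref{1.13f2}) with $a_{-1,l}=a_{-1,l-1}=0$ force $2l+k_0+1=0$ and hence $\al=l+1$, one builds $v=\sum_{i=l+1}^{l+k_0}x_iv_i\ot v_{-i}$ matching $D$ on $L_{\pm1}$, and Lemma \ref{L1.10} finishes. The only (harmless) difference is that you determine the $x_i$ from the triangular system for $L_{-1}\cdot v=D(L_{-1})$ and then verify $L_1\cdot v=D(L_1)$, whereas the paper solves for $x_i$ from the $L_1$ equation and verifies the $L_{-1}$ one; both verifications reduce to the same substitution of the recursion (\ref{1.13f3}) and the relation (\ref{1.13f2}).
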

\begin{proof}
Suppose $\be\notin\bZ$,  $p_{-1}=l+1$,
and  $q_{-1}=l+k_{0}$, where $k_{0}\geq 1$.  Then $\al=-l-k_{0}$ and (\ref{1.13f1})-(\ref{1.13f3}) become
\begin{equation}\label{1.58}
(2l+k_{0}+1)a_{1,l+1}=0,
\end{equation}
\begin{equation}\label{1.59}
(l+\be)a_{1,l+1}=(l+2-\be)a_{-1,l+1},
\end{equation}
\begin{equation}\label{1.57}
a_{-1,i}=-\frac{l+k_{0}+1-i}{i+1-\be}a_{-1,i-1},\quad\quad\, i\geq l+2.
\end{equation}
Since $\be\notin\bZ$ and $a_{-1,l+1}\neq 0$, we have $a_{1,l+1}\neq 0$ via (\ref{1.59}). Therefore, by (\ref{1.58}), we have $2l+k_{0}+1=0$, that is,
$k_{0}=-2l-1$. Hence $l\leq -1$ and $\al=l+1$. From (\ref{1.59})-(\ref{1.57}), we have
$a_{1,l+1}=\frac{l+2-\be}{l+\be}a_{-1,l+1}$
and
$$D(L_{1})=\frac{l+2-\be}{l+\be}a_{-1,l+1}v_{l+1}\otimes v_{-l}, \quad\, D(L_{-1})=\sum\limits_{i=l+1}^{-l-1}a_{-1,i}v_{i}\otimes v_{-i-l},$$
where $a_{-1,i}=\frac{l+i}{i+1-\be}a_{-1,i-1}$ for $ i\geq l+2$. Furthermore,
\begin{equation}\label{1.61}
a_{-1,l+1+k}=\left(\prod\limits_{t=1}^{k}\frac{2l+1+t}{l+2+t-\be}\right)a_{-1,l+1}, \quad\quad\, 1\leq k\leq -2l-1.
\end{equation}
In addition, $D(L_{1})=L_{1}\cdot
\left(\sum\limits_{i=l+1}^{-l-1}x_{i}v_{i}\otimes v_{-i}\right)$, where
\begin{equation*}
x_{l+1}=\frac{l+2-\be}{(l+\be)(l+1-\be)}a_{-1,l+1},\quad\,
x_{i}=\frac{l+i}{i-\be}x_{i-1}, \quad\quad\, i=l+2,\cdots, -l-1.
\end{equation*}
Then
\begin{eqnarray*}
x_{l+k}
&=&\frac{2l+k}{l+k-\be}\cdot\frac{2l+k-1}{l+k-1-\be}\cdots\frac{2l+3}{l+3-\be}\cdot\frac{2l+2}{l+2-\be}x_{l+1}=\frac{l+k+1-\be}{(l+\be)(l+1-\be)}a_{-1,l+k}
\end{eqnarray*}
for $ 1\leq k\leq -2l-1$. Hence $$
\frac{(l+\be)(l+1-\be)}{i+1-\be}x_{i}=a_{-1,i}$$
for $l+1\leq
i\leq -l-1$. It follows that
$$(l-i)x_{i+1}+(i+\be)x_{i}=a_{-1,i},\quad l+1\leq i\leq -l-1.$$
 Moreover,
\begin{eqnarray*}
L_{-1}\cdot \left(\sum\limits_{i=l+1}^{-l-1}x_{i}v_{i}\otimes v_{-i}\right)
&=&-\sum\limits_{i=l+1}^{-l-1}x_{i}\Big((i-\al)v_{i-1}\otimes v_{-i}
+(-i-\be) v_{i}\otimes
v_{-i-1}  \Big)    \\
&=&\sum\limits_{i=l+1}^{-l-1}x_{i}\Big((l+1-i)v_{i-1}\otimes v_{-i}+(i+\be)v_{i}\otimes v_{-i-1}\Big)         \\
&=&\sum\limits_{i=l+1}^{-l-1}\Big((l-i)x_{i+1}+(i+\be)x_{i}\Big)
v_{i}\otimes v_{-i-1}\\
&=&\sum\limits_{i=l+1}^{-l-1}a_{-1,i} v_{i}\otimes
v_{-i-1}=D(L_{-1}) .
\end{eqnarray*}
 Denote by
$D_{0}=D-D_{inn}$, where
$$
D_{inn}(L_{n})=L_{n}\cdot\left(\sum\limits_{i=l+1}^{-l-1}x_{i}v_{i}\otimes
v_{-i}\right).$$
Then $D_{0}(L_{\pm 1})=0$. By Lemma \ref{L1.10},
we have $D_{0}=0$. Hence $D$ is a 1-coboundary.
\end{proof}

Next, we state our main result in this section.
\begin{prop}\label{MP1}
If $(\al,\be)\notin\bZ^2$, then
$
 {\rm H}^1(\W,\mathcal{F}_{\al}\ot \mathcal{F}_{\be})= \bC \delta_{\al+\be,1}\d_{\al,\be},
$
where $\d_{\al,\be}$ is defined  in Proposition \ref{P1.6--}.
\end{prop}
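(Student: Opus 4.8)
The plan is to assemble the technical lemmas of this section into a single exhaustive case analysis, reducing the computation of the full cohomology to the behaviour of a normalized degree-zero cocycle on $L_{-1}$. First I would use symmetry to install the running hypothesis: since $(\al,\be)\notin\bZ^2$ means at least one of $\al,\be$ is a non-integer, and the flip $v_i\ot v_j\mapsto v_j\ot v_i$ is a $\W$-module isomorphism $\mathcal{F}_\al\ot\mathcal{F}_\be\cong\mathcal{F}_\be\ot\mathcal{F}_\al$ identifying $\delta_{\al+\be,1}\d_{\al,\be}$ with its analogue, I may assume $\be\notin\bZ$. By Lemma \ref{L1.2} every cohomology class is represented by some degree-zero derivation $D\in\Der(\W,\mathcal{F}_\al\ot\mathcal{F}_\be)_0$, and by Lemma \ref{L1.3} such a $D$ satisfies $D(L_0)=0$ because $(\al,\be)\neq(0,0)$. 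Applying Lemma \ref{L1.4}, I subtract a degree-zero $1$-coboundary to put $D$ in the normalized form (\ref{1.12}); this replacement preserves the grading, preserves $D(L_0)=0$ (the subtracted coboundary is induced by a weight-zero element, hence kills $L_0$), and leaves the cohomology class unchanged.

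Next I would split on $D(L_{-1})$. If $D(L_{-1})=0$, then Lemma \ref{L1.10} forces $D=0$. If $D(L_{-1})\neq0$, I introduce $p_{-1}$ and $q_{-1}$ as in the text; Lemma \ref{L1.5} shows that $p_{-1}$ equals either $l$ or $l+1$, and since $p_{-1}\leq q_{-1}$ the remaining possibilities split into three mutually exclusive cases. Case (i), $p_{-1}=q_{-1}=l$, is handled by Lemma \ref{L1.6}, which places $D$ in $\Inn(\W,\mathcal{F}_\al\ot\mathcal{F}_\be)_0\oplus\bC\delta_{\al+\be,1}\d_{\al,\be}$. Case (ii), $p_{-1}=l$ and $q_{-1}=l+k_0$ with $k_0\geq1$, is handled by Lemma \ref{L1.8+1}, giving $D$ inner. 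Case (iii), $p_{-1}=l+1$ (which forces $q_{-1}=l+k_0$ with $k_0\geq1$), is handled by Lemma \ref{L1.9}, again giving $D$ inner. In every case $D$ lies in $\Inn(\W,\mathcal{F}_\al\ot\mathcal{F}_\be)_0\oplus\bC\delta_{\al+\be,1}\d_{\al,\be}$, whence $\mathrm{H}^1(\W,\mathcal{F}_\al\ot\mathcal{F}_\be)\subseteq\bC\delta_{\al+\be,1}[\d_{\al,\be}]$.

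Finally I would fix the dimension. If $\al+\be\neq1$, the spanning class vanishes and the cohomology is zero. If $\al+\be=1$ (so both $\be$ and $\al=1-\be$ are non-integers), then $\d_{\al,\be}=\d_{1-\be,\be}$ is non-trivial by Proposition \ref{P1.6--}, so its class is nonzero and the cohomology is exactly $\bC[\d_{\al,\be}]$, one-dimensional; this yields the stated equality. I expect the main difficulty to be organizational rather than computational, since the lemmas carry the genuine work: the points to verify are that the three cases on $(p_{-1},q_{-1})$ are jointly exhaustive and invoke precisely the hypotheses of Lemmas \ref{L1.6}, \ref{L1.8+1}, and \ref{L1.9}, and that the normalization of Lemma \ref{L1.4} is compatible with both degree zero and $D(L_0)=0$, so that no class is lost or double-counted.
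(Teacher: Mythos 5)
Your proposal is correct and follows essentially the same route as the paper: reduce to a normalized degree-zero derivation with $D(L_0)=0$ via Lemmas \ref{L1.2}--\ref{L1.4}, dispose of $D(L_{-1})=0$ by Lemma \ref{L1.10}, and split the remaining possibilities for $(p_{-1},q_{-1})$ into the three exhaustive cases covered by Lemmas \ref{L1.6}, \ref{L1.8+1}, and \ref{L1.9}, with non-triviality of $\d_{1-\be,\be}$ supplied by Proposition \ref{P1.6--}. Your use of the flip isomorphism to reduce to $\be\notin\bZ$ is a slightly more explicit packaging of the paper's ``the case $\al\notin\bZ$ is similar,'' but it is not a different argument.
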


\begin{proof}
If $\be\notin\bZ$, by Lemma \ref{L1.5+1}-Lemma \ref{L1.9},  we have
\begin{align*}
 \Der(\W, \mathcal{F}_{\al}\ot \mathcal{F}_{\be})_{0}=
\begin{cases}
   \Inn(\W, \mathcal{F}_{\al}\ot \mathcal{F}_{\be})_{0}\oplus \bC \d_{\al,\be},&\al+\be=1;\\
   \Inn(\W, \mathcal{F}_{\al}\ot \mathcal{F}_{\be})_{0},& \text{otherwise}.
   \end{cases}
\end{align*}
It follows from Lemma \ref{L1.1} and Lemma \ref{L1.2} that $ {\rm H}^1(\W,\mathcal{F}_{\al}\ot \mathcal{F}_{\be})= \bC \delta_{\al+\be,1}\d_{\al,\be}$. The case $\alpha \notin \mathbb{Z}$ can be handled in a similar manner.

\end{proof}

\section{Classification of 1-cocycles for  $(\al,\be)\in\bZ^2$ }\label{section 5}

In this section, we shall compute ${\rm H}^1(\W, \mathcal{F}_{\al}\ot \mathcal{F}_{\be})$  for the case $(\al,\be)\in\bZ^2$.

By Lemma \ref{L1.2}-Lemma \ref{L1.3}, it suffices to consider
$D\in\Der(\W, \mathcal{F}_{\al}\ot \mathcal{F}_{\be})_{0}$ and $
D(L_{0})=a \delta_{\al,0}\delta_{\be,0} v_{0}\otimes v_{0}
$
for some $a\in\bC$. If $D(L_{0})\neq 0$,  we suppose that $D(L_{0})=v_{0}\otimes\,v_{0}$. By  replacing $D$ by $D-\d_{0,0}^{1}$, we have $D(L_{0})=0$.  Thus it suffices to deal with the case $D(L_{0})= 0$.

By  Lemma \ref{L1.4e1} and Lemma \ref{L1.4e4}, we assume
$$D(L_1)=a_{1,\be}v_{\be}\otimes\,v_{-\be+1}+a_{1,l+1}v_{l+1}\otimes\,v_{-l}$$
for some $l\in\bZ$ and $\be\leq l$.

For the convenience of readers, we provide a  scheme of our proof in this section. It will be divided into three cases for discussion.
\begin{itemize}
      \item If $D(L_{\pm1})=0$, we shall classify all 1-cocycles.
\item If $D(L_{\mp 1})=0$ and $D(L_{\pm1})\neq 0$, we shall determine all 1-cocycles.

\item If $D(L_{\pm1})\neq 0$ and
$$p_{-1}=\min\{i\mid a_{-1,i}\neq 0\}, \quad\, q_{-1}=\max\{i\mid a_{-1,i}\neq 0\},$$
By discussing the values of $p_{-1}$ and $q_{-1}$, we shall  obtain all 1-cocycles.

\end{itemize}

It is  important to note that the proof process for these three cases is not entirely analogous to others and needs individual treatment.

\subsection{$D(L_{\pm1})=0$}

\begin{lemm}\label{L3.0}
If $D(L_{0})=D(L_{\pm1})=0$, we have
\begin{align*}
 D\in
\begin{cases}
   \Inn(\W, \mathcal{F}_{\al}\ot \mathcal{F}_{\be})_{0}\oplus\bC\d_{0,0}^{1}\oplus\bC\d_{0,0}^{2}\oplus\bC\d_{0,0}^{3}\oplus\bC\d_{0,0}^{4}\oplus\bC\d_{0,0}^{5},&(\al,\be)=(0,0);\\
   \Inn(\W, \mathcal{F}_{\al}\ot \mathcal{F}_{\be})_{0}\oplus\bC\d_{0,1}\oplus\bC\d_{0,1}^{1},& (\al,\be)=(0,1);\\
   \Inn(\W, \mathcal{F}_{\al}\ot \mathcal{F}_{\be})_{0}\oplus\bC\d_{1,0}\oplus\bC\d_{1,0}^{1},&(\al,\be)=(1,0);\\
   \Inn(\W, \mathcal{F}_{\al}\ot \mathcal{F}_{\be})_{0}\oplus\bC\d_{0,2},& (\al,\be)=(0,2);\\
   \Inn(\W, \mathcal{F}_{\al}\ot \mathcal{F}_{\be})_{0}\oplus\bC\d_{2,0},&(\al,\be)=(2,0);\\
   \Inn(\W, \mathcal{F}_{\al}\ot \mathcal{F}_{\be})_{0}\oplus\bC\d_{1,1},& (\al,\be)=(1,1);\\
   \Inn(\W, \mathcal{F}_{\al}\ot \mathcal{F}_{\be})_{0},& \text{otherwise}.
\end{cases}
\end{align*}
\end{lemm}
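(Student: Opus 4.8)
The plan is to exploit the hypotheses $D(L_0)=D(L_{\pm1})=0$ to pin down $D(L_{\pm2})$, and then to invoke the fact (recorded in the proof of Lemma \ref{L1.10}) that $\W$ is generated by $\{L_{\pm1},L_{\pm2}\}$, so that $D$ is completely determined by its values on these four generators. First I would substitute $a_{1,i}=a_{-1,i}=0$ into the relations of Lemma \ref{L1.5ee}: equation (\ref{1.6c}) collapses to the first-order recursion $(i+1-\al)a_{2,i+1}=(i-2+\be)a_{2,i}$, while equation (\ref{1.6cc}) collapses to $(i-1+\al)a_{-2,i-1}=(i+2-\be)a_{-2,i}$, both valid for all $i\in\bZ$. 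Since $D(L_2)$ and $D(L_{-2})$ have finite support, each recursion confines the support of $a_{2,\cdot}$ (resp. $a_{-2,\cdot}$) to an interval whose endpoints are exactly the indices where the leading coefficients $i+1-\al,\ i-2+\be$ (resp. $i-1+\al,\ i+2-\be$) vanish; solving on that interval expresses $D(L_{\pm2})$ in terms of a small number of scalars.

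Next I would impose the cross-relation coming from Lemma \ref{L1.5e} with $m=2$, namely equation (\ref{1.6ccc}), which ties $a_{2,\cdot}$ to $a_{-2,\cdot}$ and removes the remaining freedom, leaving in the generic situation at most a one-parameter family of admissible pairs $\big(D(L_2),D(L_{-2})\big)$. In the generic integral case (the ``otherwise'' branch) these constraints force $a_{2,\cdot}=a_{-2,\cdot}=0$, hence $D(L_{\pm2})=0$ and therefore $D=0\in\Inn(\W,\mathcal{F}_{\al}\ot\mathcal{F}_{\be})_0$.

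The conclusion then follows by a matching argument applied in each exceptional case. For the relevant $(\al,\be)$ I would exhibit a linear combination $\sum_i c_i\,\d_i$ of the explicit cocycles attached to $(\al,\be)$ in Propositions \ref{P1.6--}, \ref{P3.0} and \ref{P3.1} (together, if needed, with a degree-zero inner derivation $D_v$) whose values on $L_{\pm1}$ and $L_{\pm2}$ agree with those of $D$: the coefficients $c_i$ are read off by comparing $D(L_{\pm2})$ with $\d_i(L_{\pm2})$, and $D_v$ absorbs any residual value on $L_{\pm1}$. Setting $E:=D-\sum_i c_i\,\d_i-D_v$, this $E$ is a $1$-cocycle vanishing on $\{L_{\pm1},L_{\pm2}\}$, and the cocycle identity $E([x,y])=x\cdot E(y)-y\cdot E(x)$ propagates the vanishing, by induction on bracket length, to all of $\W$. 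Hence $E=0$ and $D\in\Inn(\W,\mathcal{F}_{\al}\ot\mathcal{F}_{\be})_0\oplus\bigoplus_i\bC\,\d_i$, which is the asserted containment.

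The main obstacle is the case-by-case solution of the two recursions: for the small integer pairs the vanishing loci of the leading coefficients collide (for instance $i+1-\al$ and $i-2+\be$ vanishing at adjacent or coincident indices), which enlarges the admissible solution space for $D(L_{\pm2})$ and is precisely what forces the exceptional cocycles to appear at $(0,0),(0,1),(1,0),(0,2),(2,0)$ and $(1,1)$. Carrying out this analysis carefully, and verifying in each exceptional case that the explicit cocycles of Section \ref{section 3} genuinely reproduce the computed $D(L_{\pm2})$ on the generators, is the technical heart of the argument; once the values on $\{L_{\pm1},L_{\pm2}\}$ are matched, everything else is forced by the generation property.
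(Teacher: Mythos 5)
Your proposal follows essentially the same route as the paper's proof: the hypotheses collapse (\ref{1.6c})--(\ref{1.6cc}) to the first-order recursions $(i+1-\al)a_{2,i+1}=(i-2+\be)a_{2,i}$ and $(i-1+\al)a_{-2,i-1}=(i+2-\be)a_{-2,i}$, finite support confines $D(L_{\pm2})$ to the interval determined by the vanishing loci, the $m=2$ instance of (\ref{1.6ccc}) ties the two together, and the exceptional pairs are matched against the explicit cocycles before concluding via generation by $\{L_{\pm1},L_{\pm2}\}$ (the paper instead runs the induction $(n-2)D(L_n)=D[L_{n-1},L_1]$, which is the same idea). The only imprecision is your claim that in the ``otherwise'' branch the constraints force $a_{2,\cdot}=a_{-2,\cdot}=0$: for $\al=\be<0$ one only gets that $D(L_{\pm2})$ is inner (the paper's Subcase 3.3), but your matching step with the auxiliary coboundary $D_v$ already absorbs this.
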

\begin{proof}
If $D(L_{\pm1})=0$,  we have
\begin{equation}\label{96}
(i+1-\al)a_{2,i+1}=(i-2+\be)a_{2,i}, \quad\, (i-1+\al)a_{-2,i-1}=(i+2-\be)a_{-2,i}
\end{equation}
from (\ref{1.6c}) and (\ref{1.6cc}). Since both $\{a_{2,i}\neq 0|i\in\bZ\}$ and $\{a_{-2,i}\neq 0|i\in\bZ\}$ are finite,
by (\ref{96}), we have
$$D(L_{2})=\sum\limits_{i=\al}^{2-\be} a_{2,i}v_{i}\otimes v_{-i+2}, \quad\,   D(L_{-2})=\sum\limits_{i=\be-2}^{-\al} a_{-2,i}v_{i}\otimes v_{-i-2}, $$
 where $\al\leq 2-\be$. Moreover,
\begin{equation}\label{97}
a_{2,\al+k}= a_{2,\al}\prod\limits_{t=1}^{k}\frac{\al+\be-3+t}{t}, \quad\quad\, 1\leq k\leq 2-\al-\be,
\end{equation}
\begin{equation}\label{98}
 a_{-2,-\al-k}= a_{-2,-\al}\prod\limits_{t=1}^{k}\frac{\al+\be-3+t}{t}, \quad\quad\, 1\leq k\leq 2-\al-\be.
\end{equation}
On the other hand, by $L_{2}\cdot D(L_{-2})=L_{-2}\cdot D(L_{2})$, we have
\begin{eqnarray}
& &\be a_{-2,\be-2}v_{\be-2}\otimes v_{2-\be}+(\be-1) a_{-2,\be-1}v_{\be-1}\otimes v_{1-\be}   \nonumber\\
& &+ \sum\limits_{i=\be}^{-\al}\Big((i-2+2\al)a_{-2,i-2}+(-i-2+2\be) a_{-2,i}\Big)v_{i}\otimes v_{-i}  \nonumber\\
& &+(\al-1) a_{-2,-\al-1}v_{-\al+1}\otimes v_{\al-1}+\al a_{-2,-\al}v_{-\al+2}\otimes v_{\al-2}  \nonumber\\
&=&-\al a_{2,\al}v_{\al-2}\otimes v_{-\al+2}-(\al-1) a_{2,\al+1}v_{\al-1}\otimes v_{-\al+1}  \label{99}\\
& &+ \sum\limits_{i=\al}^{-\be}\Big((i+2-2\al)a_{2,i+2}+(-i+2-2\be) a_{2,i}\Big)v_{i}\otimes v_{-i}     \nonumber\\
& &-(\be-1) a_{2,-\be+1}v_{-\be+1}\otimes v_{\be-1}-\be a_{2,-\be+2}v_{-\be+2}\otimes v_{\be-2}.  \nonumber
\end{eqnarray}

{\bf Case 1: }  $\al< \be$. Then
$-\al a_{2,\al}v_{\al-2}\otimes v_{-\al+2}=0$ and $\al a_{-2,-\al}v_{-\al+2}\otimes v_{\al-2}=0$ via (\ref{99}). Hence
$\al a_{2,\al}=\al a_{-2,-\al}=0.$

{\bf Subcase 1.1:}  $\al\neq 0$. Then $a_{2,\al}=a_{-2,-\al}=0$, which forces $D(L_{\pm2})=0$. Hence $D=0$.

{\bf Subcase 1.2:} $\al=0$.  Since $\be\leq 2$, we have $0<\be\leq 2$, i.e., $\be=1$ or $2$.

{\bf (1) } $(\al,\be)=(0,1)$. By (\ref{96}), we get
$$D(L_{2})=a_{2,0}v_{0}\otimes v_{2}+a_{2,1}v_{1}\otimes v_{1}, \quad\,   D(L_{-2})=a_{-2,-1}v_{-1}\otimes v_{-1}+a_{-2,0}v_{0}\otimes v_{-2},$$
and
$a_{2,0}=-a_{2,1}, a_{-2,0}=-a_{-2,-1}.$
Let $m=2, i=-1$ in (\ref{1.6ccc}). Then $a_{-2,-1}=a_{2,1}$. Hence
$$D(L_{2})=a_{2,0}(v_{0}\otimes v_{2}-v_{1}\otimes v_{1}), \quad\,   D(L_{-2})=-a_{2,0}(v_{-1}\otimes v_{-1}-v_{0}\otimes v_{-2}).$$
Set $a=\frac{a_{2,0}}{2}$ and replace $D$ by $D{-}2a \d_{0,1}-D_{inn}$, where ${D_{inn}(L_{n})=L_{n}\cdot (av_{0}\otimes\,v_{0}})$ for all $n\in\bZ$. Then
$${D(L_{-2})=0,\quad\,   D(L_{-1})=-av_{0}\otimes\,v_{-1},\quad\, D(L_{1})=3av_{0}\otimes\,v_{1}, \quad\, D(L_{2})=8av_{0}\otimes\,v_{2}}.$$
Using induction on $n$ by
$(n-2)D(L_{n})=D[L_{n-1}, L_{1}]$ and $(n-2)D(L_{-n})=D[L_{-1}, L_{-n+1}]$,
we have
$$D(L_{n})=a({n^{2}+2n}) v_{0}\otimes v_{n}=a\d_{0,1}^{1}(L_{n}){-}L_{n}\cdot (2av_{0}\otimes v_{0}), \quad\forall n\in\bZ.$$
So $D\in \Inn(\W, \mathcal{F}_{\al}\ot \mathcal{F}_{\be})_{0}\oplus\bC\d_{0,1}\oplus\bC\d_{0,1}^{1}$.

{\bf (2)} $(\al,\be)=(0,2)$.  Then $D(L_{2})= a_{2,0}v_{0}\otimes v_{2} $ and $ D(L_{-2})= a_{-2,0}v_{0}\otimes v_{-2}.$
By (\ref{1.6ccc}), we have
$a_{-2,0}=-a_{2,0}$.
Using induction on $n$ by
$$(n-2)D(L_{n})=D[L_{n-1}, L_{1}]=-L_{1}\cdot D(L_{n-1}),\ (n-2)D(L_{-n})=D[L_{-1}, L_{-n+1}]=L_{-1}\cdot D(L_{-n+1}),$$
we obtain
$$D(L_{n})=\frac{a_{-2,0}}{6}(n^{3}-n) v_{0}\otimes v_{n}=\frac{a_{-2,0}}{6}\d_{0,2}(L_{n})+ L_{n}\cdot\left(\frac{a_{-2,0}}{12}v_{0}\otimes v_{0}\right), \quad\forall n\in\bZ.$$
So  $D\in \Inn(\W, \mathcal{F}_{\al}\ot \mathcal{F}_{\be})_{0}\oplus\bC\d_{0,2}$.


{\bf Case 2: } $\be<\al$.  Since $\al+\be\leq 2$, we have $2\be<\al+\be\leq 2$. So $\be\leq 0$. By (\ref{99}), we have
$$(\be-1) a_{-2,\be-1}v_{\be-1}\otimes v_{1-\be}=0, \quad\, -(\be-1) a_{2,-\be+1}v_{-\be+1}\otimes v_{\be-1}=0.$$
Furthermore,
$$\Big((i-2+2\al)a_{-2,i-2}+(-i-2+2\be) a_{-2,i}\Big)v_{i}\otimes v_{-i}=0, \quad\quad\, i=\be, \cdots, -\al,$$
$$\Big((i+2-2\al)a_{2,i+2}+(-i+2-2\be) a_{2,i}\Big)v_{i}\otimes v_{-i}=0, \quad\quad\, i=\al, \cdots, -\be,$$
$$(\al-1) a_{-2,-\al-1}v_{-\al+1}\otimes v_{\al-1}=0, \quad\, -(\al-1) a_{2,\al+1}v_{\al-1}\otimes v_{-\al+1}=0,$$
$$\al a_{-2,-\al}v_{-\al+2}\otimes v_{\al-2}=0, \quad\,  -\al a_{2,\al}v_{\al-2}\otimes v_{-\al+2}=0.$$

{\bf Subcase 2.1:} $\be<0$. Then $a_{-2,\be-2}=a_{-2,\be-1}=0$, $a_{2,-\be+2}=a_{2,-\be+1}=0$ and we get $D(L_{\pm2})=0$. Hence $D=0$.

{\bf Subcase 2.2:} $\be=0$. Then $\al=1$ or $2$.

{\bf (1) } $(\al,\be)=(1,0)$. Then
$$D(L_{2})=a_{2,1}v_{1}\otimes v_{1}+a_{2,2}v_{2}\otimes v_{0}, \quad\,
D(L_{-2})=a_{-2,-2}v_{-2}\otimes v_{0}+a_{-2,-1}v_{-1}\otimes v_{-1}.$$
By (\ref{96}), we have $a_{2,2}=-a_{2,1}, a_{-2,-2}=-a_{-2,-1}$.
Let $m=2, i=1$ in (\ref{1.6ccc}). Then
$a_{-2,-1}=a_{2,1}.$  So
$D(L_{2})=a_{2,2}(v_{2}\otimes v_{0}-v_{1}\otimes v_{1}) $ and $
D(L_{-2})=a_{2,2}(v_{-2}\otimes v_{0}-v_{-1}\otimes v_{-1}).$
Replacing $D$ by $D+a_{2,2} \d_{1,0}+D_{inn}$, where $D_{inn}(L_{n})=L_{n}\cdot\left(\frac{a_{2,2}}{2}v_{0}\otimes\,v_{0}\right)$ for all $n\in\bZ$. Then
$$D(L_{-1})=\frac{3a_{2,2}}{2}v_{-1}\otimes\,v_{0}, \quad\, D(L_{1})=-\frac{a_{2,2}}{2}v_{1}\otimes\,v_{0}, \quad\,D(L_{-2})=4a_{2,2}v_{-2}\otimes\,v_{0}, \quad\,  D(L_{2})=0.$$
Using induction on $n$ by $(n-2)D(L_{n})=D[L_{n-1}, L_{1}]$ and $(n-2)D(L_{-n})=D[L_{-1}, L_{-n+1}],$  we have
$$D(L_{n})=\frac{a_{2,2}}{2}(n^{2}-2n) {v_{n}\otimes v_{0}}=\frac{a_{2,2}}{2}\d_{0,1}^{1}(L_{n})+L_{n}\cdot \Big(a_{2,2}v_{0}\otimes v_{0}\Big), \quad\forall n\in\bZ.$$
Therefore, $D\in \Inn(\W, \mathcal{F}_{\al}\ot \mathcal{F}_{\be})_{0}\oplus\bC\d_{1,0}\oplus\bC\d_{1,0}^{1}$.

{\bf (2) } $(\al,\be)=(2,0)$. Then
$D(L_{2})= a_{2,2}v_{2}\otimes v_{0}$ and $D(L_{-2})= a_{-2,-2}v_{-2}\otimes v_{0}.$ Let $m=2,i=0$ in (\ref{1.6ccc}). Then
$a_{-2,-2}=-a_{2,2}$.
Using induction on $n$ by
$$(n-2)D(L_{n})=D[L_{n-1}, L_{1}]=-L_{1}\cdot D(L_{n-1}),(n-2)D(L_{-n})=D[L_{-1}, L_{-n+1}]=L_{-1}\cdot D(L_{-n+1}),$$
we deduce that
$$D(L_{n})=\frac{a_{2,2}}{6}(n^{3}-n) {v_{n}\otimes v_{0}}=\frac{a_{2,2}}{6}\d_{2,0}(L_{n})+ L_{n}\cdot\left(\frac{a_{2,2}}{12}v_{0}\otimes v_{0}\right), \quad\forall n\in\bZ,$$
which implies that $D\in \Inn(\W, \mathcal{F}_{\al}\ot \mathcal{F}_{\be})_{0}\oplus\bC\d_{2,0}$.

{\bf Case 3: }  $\al=\be$. Then $\al=\be=0, 1$ or $\al=\be< 0$.

{\bf Subcase 3.1:} $\al=\be=0$. Then
$D(L_{2})=\sum\limits_{i=0}^{2} a_{2,i}v_{i}\otimes v_{-i+2}$ and $ D(L_{-2})=\sum\limits_{i=-2}^{0} a_{-2,i}v_{i}\otimes v_{-i-2}$.
By (\ref{97})-(\ref{98}), we have
$a_{2,1}=-2a_{2,0}, a_{2,2}=a_{2,0}, a_{-2,-1}=-2a_{-2,0}, a_{-2,-2}=a_{-2,0}.$
Using (\ref{99}), we get $a_{-2,-1}=-a_{2,1}$. So $a_{-2,0}=-a_{2,0}$. Set $a=a_{2,0}$, then
$$D(L_{2})=a(v_{0}\otimes v_{2}+v_{2}\otimes v_{0})-2a v_{1}\otimes v_{1},\quad D(L_{-2})=-a(v_{0}\otimes v_{-2}+v_{-2}\otimes v_{0})+2a v_{-1}\otimes v_{-1}.$$
Using induction on $n$ by $(n-2)D(L_{n})=D[L_{n-1}, L_{1}]$ and $(n-2)D(L_{-n})=D[L_{-1}, L_{-n+1}],$ we get
$$D(L_n)=\begin{cases}(n-1)a(v_{0}\otimes\,v_{n}+v_{n}\otimes\,v_{0})-2a \sum\limits_{i=1}^{n-1}v_{i}\otimes\,v_{n-i},& n\geq 2;\\
D(L_n)=(n+1)a(v_{n}\otimes\,v_{0}+v_{0}\otimes\,v_{n})+2a \sum\limits_{i=n+1}^{-1}v_{i}\otimes\,v_{n-i},& n\leq -2.
\end{cases}$$
Then  $D=a \left(-\d_{0,0}^{1}+\d_{0,0}^{2}-\d_{0,0}^{3}+\d_{0,0}^{4}+2\d_{0,0}^{5}\right).$ It follows that
 $$D\in \Inn(\W, \mathcal{F}_{\al}\ot \mathcal{F}_{\be})_{0}\oplus\bC\d_{0,0}^{1}\oplus\bC\d_{0,0}^{2}\oplus\bC\d_{0,0}^{3}\oplus\bC\d_{0,0}^{4}\oplus\bC\d_{0,0}^{5}.$$

{\bf Subcase 3.2:} $\al=\be=1$. Then
$$D(L_{2})= a_{2,1}v_{1}\otimes v_{1}, \quad\,   D(L_{-2})= a_{-2,-1}v_{-1}\otimes v_{-1}=- a_{2,1}v_{-1}\otimes v_{-1}.$$
Using induction on $n$ by
$$(n-2)D(L_{n})=D[L_{n-1}, L_{1}]=-L_{1}\cdot D(L_{n-1}),(n-2)D(L_{-n})=D[L_{-1}, L_{-n+1}]=L_{-1}\cdot D(L_{-n+1}),$$
 we obtain
$$D(L_{n})=a_{2,1}\sum\limits_{i=1}^{n-1}i(n-i) v_{i}\otimes v_{n-i}=a_{2,1}{\sum\limits_{i=0}^{n-1}}i(n-i) v_{i}\otimes v_{n-i}, \quad\quad\, n\geq 0;$$
$$D(L_{n})=-a_{2,1}\sum\limits_{i=n}^{0}i(n-i) v_{i}\otimes v_{n-i}=-a_{2,1}{\sum\limits_{i=n}^{-1}}i(n-i) v_{i}\otimes v_{n-i}, \quad\quad\, n\leq -1.$$
Then  $D=a_{2,1}\d_{1,1}\in  \Inn(\W, \mathcal{F}_{\al}\ot \mathcal{F}_{\be})_{0}\oplus\bC\d_{1,1}$.


{\bf Subcase 3.3:} $\al=\be<0$. Then $a_{-2,\al+k-2}=-a_{2,\al+k}$ for $0\leq k\leq -2\al+2$.
It follows that
$L_{\pm 1}\cdot\left(\sum\limits_{i=\al}^{-\al}x_{i}v_{i}\otimes v_{-i}\right)=0$
and
$D(L_{\pm 2})=L_{\pm2}\cdot\left(\sum\limits_{i=\al}^{-\al}x_{i}v_{i}\otimes v_{-i}\right),$
where
$x_{\al}=-\frac{a_{2,\al}}{\al},$  $x_{\al+k}=\left(\prod\limits_{t=1}^{k}\frac{2\al+t-1}{t}\right)x_{\al}$ for $1\leq k\leq -2\al.$
Hence  $D\in  \Inn(\W, \mathcal{F}_{\al}\ot \mathcal{F}_{\be})_{0}$.
\end{proof}

\subsection{$D(L_{\mp1})=0$ and $D(L_{\pm1})\neq 0$}

\begin{lemm}\label{L3.01}
If $D(L_{0})=D(L_{-1})=0$ and $D(L_{1})\neq 0$, we have
\begin{align*}
 D\in
\begin{cases}
   \Inn(\W, \mathcal{F}_{\al}\ot \mathcal{F}_{\be})_{0}\oplus \bC \d_{0,0}^{1}\oplus \bC\d_{0,0}^{2}\oplus \bC\d_{0,0}^{3}\oplus \bC\d_{0,0}^{4}\oplus \bC\d_{0,0}^{5},&(\al,\be)=(0,0);\\
   \Inn(\W, \mathcal{F}_{\al}\ot \mathcal{F}_{\be})_{0}\oplus \bC \d_{1,0}^{1}\oplus \bC \d_{1,0},& (\al,\be)=(1,0);\\
   \Inn(\W, \mathcal{F}_{\al}\ot \mathcal{F}_{\be})_{0}\oplus \bC \d_{1-\be,\be},& \al=1-\be, \, \be\leq -1;\\
   \Inn(\W, \mathcal{F}_{\al}\ot \mathcal{F}_{\be})_{0},& \text{otherwise}.
\end{cases}
\end{align*}
\end{lemm}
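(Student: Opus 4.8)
The plan is to squeeze everything out of the single relation coming from $[L_1,L_{-1}]=2L_0$ together with the already-normalized shape of $D(L_1)$, and then to bootstrap to all of $\W$ by the quadratic relations and an induction on $|n|$. First I would record the basic identity: applying $D$ to $[L_1,L_{-1}]=2L_0$ and using $D(L_0)=D(L_{-1})=0$ gives $L_{-1}\cdot D(L_1)=0$, and writing $D(L_1)=\sum_i a_{1,i}v_i\ot v_{1-i}$ this becomes $(i+1-\al)a_{1,i+1}=(i-1+\be)a_{1,i}$ for all $i\in\bZ$. Since $a_{-1,i}=0$, this is the only constraint binding $D(L_1)$ to itself, and it is the engine of the whole argument.

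Second, I would feed this recurrence into the normalized form $D(L_1)=a_{1,\be}v_\be\ot v_{-\be+1}+a_{1,l+1}v_{l+1}\ot v_{-l}$ supplied by Lemma~\ref{L1.4e1} and Lemma~\ref{L1.4e4}, and read the recurrence immediately below and above each occupied index. Because the off-support coefficients vanish, this forces sharp constraints on $(\al,\be)$. If the term at $i=\be$ survives, the relation at $i=\be-1$ gives $\al=\be$, the relation at $i=\be$ then forces $l=\be$ (support $\{\be,\be+1\}$), and a final step at the top index forces $\be=0$, i.e.\ $(\al,\be)=(0,0)$. If only the term at $i=l+1$ survives, the relations at $i=l$ and $i=l+1$ force $\al=l+1$ and $\be=-l$, hence $\al+\be=1$ with $\be=-l\le 0$; this divides into $(\al,\be)=(1,0)$ and the family $\al=1-\be,\ \be\le -1$. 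For every remaining $(\al,\be)$ there is no nonzero normalized $D(L_1)$ compatible with the recurrence, so the hypothesis $D(L_1)\neq 0$ cannot occur and $D$ is forced into $\Inn(\W,\mathcal F_\al\ot\mathcal F_\be)_0$, giving the ``otherwise'' line.

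Third, for each surviving $(\al,\be)$ I would determine $D(L_{\pm 2})$ from the quadratic relations (\ref{1.6c}) and (\ref{1.6cc}), whose left-hand sides simplify because $a_{-1,i}=0$, fix the residual scalars using the cross relation (\ref{1.6ccc}) with $m=2$, and then propagate to all $D(L_n)$ by induction using $[L_{n-1},L_1]=(n-2)L_n$ and $[L_{-1},L_{-n+1}]=(2-n)L_{-n}$. Comparing the resulting closed forms with the explicit cocycles of Propositions~\ref{P3.0}, \ref{P3.1} and \ref{P1.6--}, and absorbing a suitable inner derivation, then yields exactly the claimed memberships: the five-dimensional span $\bC\d_{0,0}^1\oplus\cdots\oplus\bC\d_{0,0}^5$ for $(0,0)$, the space $\bC\d_{1,0}^1\oplus\bC\d_{1,0}$ for $(1,0)$, and $\bC\d_{1-\be,\be}$ for $\al=1-\be,\ \be\le -1$.

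The main obstacle I expect is the exceptional case $(\al,\be)=(0,0)$. There the recurrence coefficients degenerate, the inductive identity $[L_{n-1},L_1]=(n-2)L_n$ breaks at $n=2$ and must be replaced by the quadratic relations as a base step, and one has to disentangle a genuine five-dimensional contribution modulo coboundaries; this is where the bulk of the bookkeeping lives. A secondary subtlety requiring care is that the normalization producing the two-term shape of $D(L_1)$ subtracts a coboundary that also perturbs $D(L_{-1})$, so I would verify that the hypothesis $D(L_{-1})=0$ is genuinely preserved (equivalently, that any class which could be shifted off this chirality is classified by the companion lemma and hence correctly excluded from the present list).
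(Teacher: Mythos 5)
Your proposal is correct and follows essentially the same route as the paper: the single recurrence $(i+1-\al)a_{1,i+1}=(i-1+\be)a_{1,i}$ coming from $[L_1,L_{-1}]=2L_0$ with $D(L_0)=D(L_{-1})=0$ pins $(\al,\be)$ down to $(0,0)$ or $\al+\be=1$ with $\be\le 0$, after which $D(L_{\pm 2})$ is computed from the degree-two relations and the rest follows by induction and comparison with the explicit cocycles. The only difference is organizational --- you case on which term of the normalized $D(L_1)$ survives rather than on $\al=\be$ versus $\al\ne\be$, which incidentally disposes of the subcase $\al=\be\ne 0$ directly from the recurrence at $i=\be$ (since $2\be-1\ne 0$ for integer $\be$) instead of via the auxiliary $D(L_{-2})$ computation the paper uses there.
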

\begin{proof}
By (\ref{1.6ccc}), we have
\begin{equation}\label{104}
 (i+1-\al)a_{1,i+1}=(i-1+\be)a_{1,i}.
\end{equation}
Let $i=\be-1, l, l+1$ in (\ref{104}) respectively. Then
\begin{equation}\label{105}
 (\be-\al)a_{1,\be}=0, \quad\, (l+1-\al)a_{1,l+1}=(l-1+\be)a_{1,l}, \quad\, (l+\be)a_{1,l+1}=0.
\end{equation}

{\bf Case 1:} $\al\neq \be$. Then $a_{1,\be}=0$ by (\ref{105}) and $D(L_1)=a_{1,l+1}v_{l+1}\otimes\,v_{-l}$.
Since $D(L_{1})\neq 0$, we have $a_{1,l+1}\neq 0$.
Therefore, we have $a_{1,l}=0$ and $l=-\be, \,\al=1-\be$ via (\ref{105}).
{ For $\be\leq l$,  $\be\leq 0$ } and
$$D(L_1)=a_{1,-\be+1}v_{-\be+1}\otimes\,v_{\be}=a_{1,\al}v_{\al}\otimes\,v_{-\al+1}.$$
By (\ref{1.6c}), we have
\begin{equation}\label{106}
a_{2,-\be+2}+a_{2,-\be+1}=3a_{1,-\be+1},
\end{equation}
\begin{equation*}
(i+\be)a_{2,i+1}=(i-2+\be)a_{2,i},\quad\quad\, i\neq -\be+1.
\end{equation*}
Since $\{a_{2,i}\neq 0|i\in\bZ\}$ is finite, we have $a_{2,i}=0, i\neq -\be+1, -\be+2.$ By (\ref{1.6cc}), we have
\begin{equation}\label{107}
(1-2\be)a_{-2,-\be}-(3-2\be)a_{-2,-\be+1}=-\be a_{1,-\be+1},
\end{equation}
\begin{equation}\label{108}
(-1-2\be)a_{-2,-\be-2}-(1-2\be)a_{-2,-\be-1}=(\be-1) a_{1,-\be+1},
\end{equation}
\begin{equation}\label{109}
(i-\be)a_{-2,i-1}=(i+2-\be)a_{-2,i},\quad\quad\, i\neq -\be-1, -\be+1.
\end{equation}
It is clear that $a_{-2,\be}=a_{-2,\be-3}=0 $ and $\be a_{-2,-\be-1}=(\be-1)a_{-2,-\be}.$
By (\ref{1.6ccc}), we have
\begin{equation}\label{110}
 (i+2\be)a_{2,i+2}-(i-2+2\be)a_{2,i}=(i-2\be)a_{-2,i-2}-(i+2-2\be)a_{-2,i}.
\end{equation}
Furthermore, we have
\begin{eqnarray}
&& (\be-1)a_{2,-\be+1}=-(3\be+1)a_{-2,-\be-3}+(3\be-1)a_{-2,-\be-1},\label{111}\\
&& \be a_{2,-\be+2}=-3\be a_{-2,-\be-2}+(3\be-2)a_{-2,-\be}, \label{112}\\
&& -(\be-1)a_{2,-\be+1}=-(3\be-1)a_{-2,-\be-1}+(3\be-3)a_{-2,-\be+1},\label{113}\\
&&-\be a_{2,-\be+2}=-(3\be-2) a_{-2,-\be}+(3\be-4)a_{-2,-\be+2},\label{114}\\
&&(i-2\be)a_{-2,i-2}=(i-2\be+2)a_{-2,i}, \quad\, i\neq -\be-1,-\be, -\be+1,-\be+2. \label{115}
\end{eqnarray}

{\bf Subcase 1.1: $\be=0$}. Then $\al=1$. From (\ref{107})-(\ref{109}), we get
$$a_{-2,-2}+a_{-2,-1}=a_{1,1}, \quad\, a_{-2,i}=0, \quad\quad\, i\neq -2,-1.$$
Using (\ref{111}), we get
$a_{-2,-1}=a_{2,1}$.
Set $a=a_{1,1}, b=a_{2,1}$. Then
\begin{eqnarray*}
&&  D(L_{-1})=0, \quad\, D(L_1)=a_{1,1}v_{1}\otimes\,v_{0}=av_{1}\otimes\,v_{0},\\
&& D(L_2)=bv_{1}\otimes\,v_{1}+(3a-b)v_{2}\otimes\,v_{0},\quad D(L_{-2})=(a-b)v_{-2}\otimes\,v_{0}+bv_{-1}\otimes\,v_{-1},
\end{eqnarray*}
where $a_{2,1}+a_{2,2}=3a_{1,1}.$
Using induction on $n$ by $(n-2)D(L_{n})=D[L_{n-1}, L_{1}]$  and $(n-2)D(L_{-n})=D[L_{-1}, L_{-n+1}],$  we obtain
$$D(L_n)=\begin{cases} b\sum\limits_{i=1}^{n}iv_{i}\otimes\,v_{n-i}+\frac{a-b}{2}(n^{2}+n)v_{n}\otimes\,v_{0},& n\geq 1,\\
-b{\sum\limits_{i=n+1}^{-1}}iv_{i}\otimes\,v_{-i+n}+\frac{a-b}{2}(n^{2}+n)v_{n}\otimes\,v_{0},& n\leq -2.
\end{cases}$$
Furthermore,
\begin{eqnarray*}
D(L_n)
&=&b\sum\limits_{i=1}^{n-1}iv_{i}\otimes\,v_{n-i}+bnv_{n}\otimes\,v_{0}+\frac{a-b}{2}(n^{2}+n)v_{n}\otimes\,v_{0}\\
&=&b\sum\limits_{i=1}^{n-1}iv_{i}\otimes\,v_{n-i}
+\frac{a+b}{2}n^{2}v_{n}\otimes\,v_{0}
-\frac{a+b}{2}n v_{n}\otimes\,v_{0}\\
&=&b\d_{1,0}(L_{n})+\frac{a-b}{2}\d_{1,0}^{1}(L_{n})-\frac{a+b}{2}L_{n}\cdot(v_{0}\otimes\,v_{0}), \quad\,n\geq 1;\\
D(L_{0})&=&0=b\d_{1,0}(L_{0})+\frac{a-b}{2}\d_{1,0}^{1}(L_{0})
-\frac{a+b}{2}L_{0}\cdot(v_{0}\otimes\,v_{0});\\
D(L_{-1})&=&0=bv_{-1}\otimes\,v_{0}+\frac{a-b}{2}v_{-1}\otimes\,v_{0}
-\frac{a+b}{2}v_{-1}\otimes\,v_{0}\\
&=&b\d_{1,0}(L_{-1})+\frac{a-b}{2}\d_{1,0}^{1}(L_{-1})
-\frac{a+b}{2}L_{-1}\cdot(v_{0}\otimes\,v_{-1});
\\
D(L_{n})&=&-b{\sum\limits_{i=n+1}^{-1}}iv_{i}\otimes\,v_{-i+n}
+\frac{a-b}{2}(n^{2}+n)v_{n}\otimes\,v_{0}\\
&=&b\Big(\d_{1,0}(L_{n})+nv_{n}\otimes\,v_{0}\Big)+\frac{a-b}{2}\d_{1,0}^{1}(L_{n})
+\frac{a-b}{2}nv_{n}\otimes\,v_{0}\\
&=&b\d_{1,0}(L_{n})+\frac{a-b}{2}\d_{1,0}^{1}(L_{n})
+\frac{a+b}{2}nv_{n}\otimes\,v_{0}\\
&=&b\d_{1,0}(L_{n})+\frac{a-b}{2}\d_{1,0}^{1}(L_{n})-\frac{a+b}{2}L_{n}\cdot(v_{0}\otimes\,v_{0}),
\quad\, n\leq -2.
\end{eqnarray*}
Then ${{D=b\d_{1,0}+\frac{a-b}{2}\d_{1,0}^{1}-\frac{a+b}{2}D_{inn}}}$.
 where $D_{inn}(L_{n})=L_{n}\cdot(v_{0}\otimes\,v_{0})$ for all $n\in\bZ$.
So
$$D\in \Inn(\W, \mathcal{F}_{\al}\ot \mathcal{F}_{\be})_{0}\oplus \bC \d_{1,0}\oplus \bC \d_{1,0}^{1}.$$


{\bf Subcase 1.2: $\be=-1$}. Then $\al=2$.  Using (\ref{107})-(\ref{115}) and setting $a=\frac{a_{1,2}}{3}$, we obtain
$$D(L_{-1})=0, \quad\, D(L_1)=a(3v_{2}\otimes\,v_{-1}),$$
$$D(L_{-2})=a(2v_{0}\otimes\,v_{-2}+v_{1}\otimes\,v_{-3}),\quad\,D(L_2)=a(4v_{2}\otimes\,v_{0}+5v_{3}\otimes\,v_{-1}).$$
Using induction on $n$ by
$(n-2)D(L_{n})=D[L_{n-1}, L_{1}]$  and $(n-2)D(L_{-n})=D[L_{-1}, L_{-n+1}],$
 we  obtain
$$D(L_n)=a\sum\limits_{i=2}^{n+1}(n+i)v_{i}\otimes\,v_{n-i},\quad\, n\geq 1;\quad\,
D(L_{n})=-a\sum\limits_{i=n+2}^{1}(n+i)v_{i}\otimes\,v_{n-i},\quad\, n\leq- 2.$$
Then  $D=a\d_{2,-1}-a D_{inn}$,
 where $D_{inn}(L_{n})=L_{n}\cdot\left(\sum\limits_{i=0}^{1}v_{i}\otimes\,v_{-i}\right)$ for all $n\in\bZ$.
So
$$D\in \Inn(\W, \mathcal{F}_{\al}\ot \mathcal{F}_{\be})_{0}\oplus \bC \d_{2,-1}.$$


{\bf Subcase 1.3: $\be\leq -2$}. Then $\al=1-\be\geq3$, $\be<-\be-2$ and
$$\cdots\cdots, \be-3, \be-2, \be-1, \be, \cdots, -\be-2, -\be-1, -\be, -\be+1, \cdots\cdots.$$
Since $\{a_{-2,i}\neq 0|i\in\bZ\}$ is finite, by (\ref{109}), we deduce that
$$a_{-2,\be}=0,\quad\, a_{-2,i}=0, \quad\quad\, i\leq \be-3, \, or \, i\geq -\be+1.$$
Furthermore, we have $a_{-2,i}=0$ for $\be+1\leq i\leq -\be-2$. Use (\ref{107})-(\ref{109}), then we obtain
\begin{equation}\label{116}
a_{-2,\be-2}=-a_{-2,\be-1}, \quad\,a_{-2,-\be}=\frac{-\be}{1-2\be} a_{1,-\be+1},\quad\, a_{-2,-\be-1}=\frac{1-\be}{1-2\be} a_{1,-\be+1}.
\end{equation}
Let $i=\be, \be+1$ in (\ref{115}) respectively. Then  $a_{-2,\be-2}=a_{-2,\be-1}=0.$
Since $\be\leq -2$, we have $\be<-\be-3<-\be-2$. So $a_{-2,-\be-3}=0$. By (\ref{111})-(\ref{112}), we have
\begin{equation*}
 (\be-1)a_{2,-\be+1}=(3\be-1)a_{-2,-\be-1},\quad\, \be a_{2,-\be+2}=(3\be-2)a_{-2,-\be}.
\end{equation*}
Using (\ref{116}),
$$a_{2,-\be+1}=\frac{1-3\be}{1-2\be} a_{1,-\be+1},\quad\, a_{2,-\be+2}=\frac{2-3\be}{1-2\be} a_{1,-\be+1}.$$
Set $a=\frac{a_{1,-\be+1}}{1-2\be}$. It follows that $D(L_{-1})=0$ and
\begin{eqnarray*}
&&D(L_{-2})=a\Big(-\be v_{-\be}\otimes\,v_{\be-2}+(1-\be)v_{-\be-1}\otimes\,v_{\be-1}\Big),\quad D(L_1)=a\Big((1-2\be)v_{-\be+1}\otimes\,v_{\be}\Big),\\
&&D(L_2)=a\Big((1-3\be)v_{-\be+1}\otimes\,v_{\be+1}+(2-3\be)v_{-\be+2}\otimes\,v_{\be}\Big).
\end{eqnarray*}Using induction on $n$ by
$(n-2)D(L_{n})=D[L_{n-1}, L_{1}]$ and $(n-2)D(L_{-n})=D[L_{-1}, L_{-n+1}],$
we have
$$D(L_n)=a\sum\limits_{i=-\be+1}^{-\be+n}(i-n\be)v_{i}\otimes\,v_{n-i},\quad\, n\geq 1; \quad\, D(L_{n})=-a\sum\limits_{i=-\be+1+n}^{-\be}(i-n\be)v_{i}\otimes\,v_{n-i},\quad\, n\leq -1.$$
Then $D(L_{n})=a\d_{1-\be,\be}(L_{n})+L_{n}\cdot\left(-a\sum\limits_{i=0}^{-\be} v_{i}\otimes v_{-i}\right).$ So $D\in  \Inn(\W, \mathcal{F}_{\al}\ot \mathcal{F}_{\be})_{0}\oplus \bC \d_{1-\be,\be}$.


{\bf Case 2:} $\al=\be$. By (\ref{105}), we have
\begin{equation}\label{117}
(l+1-\be)a_{1,l+1}=(l-1+\be)a_{1,l}, \quad\, (l+\be)a_{1,l+1}=0.
\end{equation}

{\bf Subcase 2.1:} $l=-\be$. Then
$(1-2\be)a_{1,-\be+1}=-a_{1,-\be}$ via (\ref{117}) and
$$D(L_1)=a_{1,\be}v_{\be}\otimes\,v_{-\be+1}+a_{1,-\be+1}v_{-\be+1}\otimes\,v_{\be}.$$

Assume $a_{1,-\be+1}=0$. Then $a_{1,-\be}=0$ and $D(L_1)=a_{1,\be}v_{\be}\otimes\,v_{-\be+1}$, which suggests that $\be<-\be$, i.e., $\be<0$.
Using (\ref{1.6cc}), we have
\begin{equation}\label{126}
-\be a_{1,\be}=(2\be-3)a_{-2,\be-3},
\end{equation}
\begin{equation}\label{127}
(i-1+\be)a_{-2,i-1}=(i+2-\be)a_{-2,i}, \quad\quad\, i\neq \be-2, \be.
\end{equation}
Since $\{a_{-2,i}\neq 0|i\in\bZ\}$ is finite, by (\ref{127}), we have
$$a_{-2,i}=0, \quad\quad\, i\leq \be-3, \quad\, or \quad\, i\geq -\be+1.$$
Then  $a_{1,\be}=0$ via (\ref{126}). Hence $D(L_1)=0$, a contradiction.
It follows that $a_{1,-\be+1}\neq0.$ Hence $a_{1,-\be}\neq 0$, which implies that $-\be=\be$. Then
$\al=\be=l=0.$  Hence $D(L_1)=a_{1,0}v_{0}\otimes\,v_{1}+a_{1,1}v_{1}\otimes\,v_{0}$ and
$a_{1,0}=-a_{1,1}.$
By (\ref{1.6ccc})-(\ref{1.6cc}), we have
\begin{equation}\label{118}
3a_{1,i}=(i+1)a_{2,i+1}-(i-2)a_{2,i},
\end{equation}
\begin{equation}\label{119}
 (i+2)a_{1,i+2}-(i-1)a_{1,i}=(i-1)a_{-2,i-1}-(i+2)a_{-2,i},
\end{equation}
\begin{equation}\label{120}
 (i+2)(a_{2,i+2}+a_{-2,i})=(i-2)(a_{-2,i-2}+a_{2,i}).
\end{equation}
Using (\ref{118}), we have
\begin{equation}\label{121}
3a_{1,0}=a_{2,1}+2a_{2,0},\quad\, 3a_{1,1}=2a_{2,2}+a_{2,1},
\end{equation}
\begin{equation*}
(i+1)a_{2,i+1}=(i-2)a_{2,i},\quad\quad\,i\neq 0,1.
\end{equation*}
It is clear that $a_{2,i}=0$ for $ i\neq 0,1,2.$
Using (\ref{119}), we have
\begin{equation}\label{122}
-a_{1,1}=a_{-2,-1}+2a_{-2,-2},\quad\, -a_{1,0}=a_{-2,-1}+2a_{-2,0},
\end{equation}
\begin{equation*}
(i-1)a_{-2,i-1}=(i+2)a_{-2,i},\quad\quad\,i\neq -1,0.
\end{equation*}
It is clear that $a_{-2,i}=0$ for $ i\neq -2,-1,0.$
Let $i=-1,0$ in (\ref{120}) respectively. Then
\begin{equation}\label{123}
a_{-2,-1}=-a_{2,1},\quad a_{2,2}+a_{2,0}=-a_{-2,-2}-a_{-2,0}.
\end{equation}
Set $a=\frac{a_{1,0}}{2}, b=\frac{a_{2,1}}{2}$. Then  $a_{1,0}=2a, a_{1,1}=-2a$ and
$$a_{2,0}=3a-b,\quad\, a_{2,1}=2b,\quad\,a_{2,2}=-3a-b,\quad\, a_{-2,-2}=a+b,\quad\,a_{-2,-1}=-2b,\quad\, a_{-2,0}=b-a$$
via (\ref{121})-(\ref{123}). Therefore,
$$D(L_{-1})=0,\quad\, D(L_{-2})=(a+b)v_{-2}\otimes\,v_{0}-2b v_{-1}\otimes\,v_{-1} -(a-b)v_{0}\otimes\,v_{-2},$$
$$D(L_1)=2a v_{0}\otimes\,v_{1}-2a v_{1}\otimes\,v_{0},\quad\,D(L_{2})=(3a-b)v_{0}\otimes\,v_{2}+2b v_{1}\otimes\,v_{1}-(3a+b)v_{2}\otimes\,v_{0}.$$
Using induction on $n$ by
$(n-2)D(L_{n})=D[L_{n-1}, L_{1}] $ and $(n-2)D(L_{-n})=D[L_{-1}, L_{-n+1}],$
we have
$$D(L_n)=(n+1)a(v_{0}\otimes\,v_{n}-v_{n}\otimes\,v_{0})-(n-1)b(v_{0}\otimes\,v_{n}+v_{n}\otimes\,v_{0})+2b       \sum\limits_{i=1}^{n-1}v_{i}\otimes\,v_{n-i},\quad\, n\geq 2;$$
$$D(L_n)=-(n+1)a(v_{n}\otimes\,v_{0}-v_{0}\otimes\,v_{n})-(n+1)b(v_{n}\otimes\,v_{0}+v_{0}\otimes\,v_{n})-2b       \sum\limits_{i=n+1}^{-1}v_{i}\otimes\,v_{n-i},\quad\, n\leq -2.$$
Then
$$D=a\left(\d_{0,0}^{1}+\d_{0,0}^{2}-\d_{0,0}^{3}-\d_{0,0}^{4}\right)-b\left(-\d_{0,0}^{1}+\d_{0,0}^{2}-\d_{0,0}^{3}+\d_{0,0}^{4}+2\d_{0,0}^{5}\right).$$
Hence $D\in \Inn(\W, \mathcal{F}_{\al}\ot \mathcal{F}_{\be})_{0}\oplus \bC \d_{0,0}^{1}\oplus \bC\d_{0,0}^{2}\oplus \bC\d_{0,0}^{3}\oplus \bC\d_{0,0}^{4}\oplus \bC\d_{0,0}^{5}.$

{\bf Subcase 2.2:} $l\neq -\be$. By (\ref{117}), we have
$a_{1,l+1}=0$ and $(l-1+\be)a_{1,l}=0$. Then $a_{1,\be}\neq 0$,
$$D(L_1)=a_{1,\be}v_{\be}\otimes\,v_{-\be+1}.$$

If $\be<0$, we deduce $a_{1,\be}=0$ similar to the proof of {{ Subcase 2.1}}. So $\be\geq 0$.

If $\be=0$, similar to the proof of {{ Subcase 2.1}}, we get
$$ D(L_{2})=a_{2,0} v_{0}\otimes\,v_{2}+a_{2,1} v_{1}\otimes\,v_{1}+a_{2,2}v_{2}\otimes\,v_{0},$$
$$ D(L_{-2})=a_{-2,-2}v_{-2}\otimes\,v_{0}+a_{-2,-1} v_{-1}\otimes\,v_{-1}+a_{-2,0} v_{0}\otimes\,v_{-2},$$
where
$$a_{-2,-1}=-2a_{-2,-2}, \quad\,a_{-2,0}=a_{-2,-2}-\frac{a_{1,0}}{2},\quad\, a_{2,0}=a_{2,2}+\frac{3a_{1,0}}{2},\quad\,a_{2,1}=-2a_{2,2}.$$
Let $m=2, i=1$ in (\ref{1.6ccc}). Then $a_{-2,-1}=-a_{2,1}$. So $a_{-2,-2}=-a_{2,2}$. Let $m=2, i=0$ in (\ref{1.6ccc}). Then
$a_{2,2}+a_{-2,-2}=-a_{2,0}-a_{-2,0}$. Hence $a_{-2,0}+a_{2,0}=0$, which forces $a_{1,\be}=0$, a contradiction.

If $\be>0$, by (\ref{1.6c}), we obtain
\begin{equation}\label{128}
a_{2,\be+1}+(2-2\be)a_{2,\be}=3a_{1,\be},
\end{equation}
\begin{equation*}
(i+1-\be)a_{2,i+1}=(i-2+\be)a_{2,i},\quad\quad\, i\neq \be.
\end{equation*}
Since $\{a_{2,i}\neq 0|i\in\bZ\}$ is finite, we deduce
$a_{2,i}=0$ for ${i \leq \be-1}$ or ${i \geq 3-\be}$. {When $\be-1<3-\be$, we have $\be <2$. So $\be=1$ and  $a_{1,\be}=0$, which is a contradiction}.


\end{proof}

\begin{lemm}\label{L3.02}
If $D(L_{0})=D(L_{1})=0$ and $D(L_{-1})\neq 0$, we have
\begin{align*}
 D\in
\begin{cases}
   \Inn(\W, \mathcal{F}_{\al}\ot \mathcal{F}_{\be})_{0}\oplus \bC \d_{0,0}^{1}\oplus \bC\d_{0,0}^{2}\oplus \bC\d_{0,0}^{3}\oplus \bC\d_{0,0}^{4}\oplus \bC\d_{0,0}^{5},&(\al,\be)=(0,0);\\
    \Inn(\W, \mathcal{F}_{\al}\ot \mathcal{F}_{\be})_{0}\oplus \bC \d_{0,1}^{1}\oplus \bC \d_{0,1},&(\al,\be)=(0,1);\\
   \Inn(\W, \mathcal{F}_{\al}\ot \mathcal{F}_{\be})_{0}\oplus \bC \d_{1,0}^{1}\oplus \bC \d_{1,0},& (\al,\be)=(1,0);\\
   \Inn(\W, \mathcal{F}_{\al}\ot \mathcal{F}_{\be})_{0}\oplus \bC \d_{1-\be,\be},&{\be\in\bZ, \, \be\neq 0,1;}\\
   \Inn(\W, \mathcal{F}_{\al}\ot \mathcal{F}_{\be})_{0},& \text{otherwise}.
 \end{cases}
\end{align*}
\end{lemm}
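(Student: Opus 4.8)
The plan is to classify $D$ directly, running the argument of Lemma~\ref{L3.01} with the roles of $L_{1}$ and $L_{-1}$ interchanged. Conceptually this interchange is the symmetry induced by the involution $L_n\mapsto-L_{-n}$ of $\W$ together with the linear map $v_i\ot v_j\mapsto v_{-i}\ot v_{-j}$ of $\mathcal{F}_{\al}\ot\mathcal{F}_{\be}$: one checks it intertwines the two actions via $L_m\mapsto-L_{-m}$, hence carries $\Der(\W,\mathcal{F}_{\al}\ot\mathcal{F}_{\be})_0$ to itself, preserves $\Inn(\W,\mathcal{F}_{\al}\ot\mathcal{F}_{\be})_0$ and the pair $(\al,\be)$, and exchanges the hypotheses $D(L_1)=0$ and $D(L_{-1})=0$. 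One cannot simply quote Lemma~\ref{L3.01} through this symmetry, since the normalization of $D(L_1)$ standing in this section is not preserved by it; but the underlying recursions are mirror images, so the computation transcribes. Since $D(L_0)=D(L_1)=0$ forces $a_{1,i}=0$ for all $i$, the cocycle identity $L_1\cdot D(L_{-1})=D([L_1,L_{-1}])+L_{-1}\cdot D(L_1)=2D(L_0)=0$ reduces (\ref{1.6ccc}) with $m=1$ to the single recursion
\begin{equation*}
(i-1+\al)\,a_{-1,i-1}=(i+1-\be)\,a_{-1,i},\qquad i\in\bZ,
\end{equation*}
the reflection of (\ref{104}). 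Arguing on its endpoints as in Lemma~\ref{L1.5}, the vanishing of $a_{-1,q_{-1}+1}$ forces $q_{-1}=-\al$ and that of $a_{-1,p_{-1}-1}$ forces $p_{-1}=\be-1$; thus $D(L_{-1})\neq0$ requires $\al+\be\le1$, and then $D(L_{-1})$ is determined up to the single scalar $a_{-1,\be-1}$.

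With $D(L_{-1})$ in hand I would fix $D(L_{\pm2})$ and propagate. Setting $a_{1,\cdot}=0$ in (\ref{1.6cc}) and (\ref{1.6c}) turns them into inhomogeneous recursions expressing $a_{-2,\cdot}$ and $a_{2,\cdot}$ through the known $a_{-1,\cdot}$, while (\ref{1.6ccc}) with $m=2$ imposes the compatibility $L_2\cdot D(L_{-2})=L_{-2}\cdot D(L_2)$; together these pin down $D(L_{\pm2})$ up to the homogeneous solutions, which are the values on $L_{\pm2}$ of the degree-zero cocycles vanishing on $L_{\pm1}$. One then recovers the whole cocycle from $D(L_{-1}),D(L_{\pm2})$ by the inductions $(n-2)D(L_n)=-L_1\cdot D(L_{n-1})$ and $(n-2)D(L_{-n})=L_{-1}\cdot D(L_{-n+1})-L_{-n+1}\cdot D(L_{-1})$, and reads off the class of $D$ modulo $\Inn(\W,\mathcal{F}_{\al}\ot\mathcal{F}_{\be})_0$. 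The cases are organized by the length $2-\al-\be$ of the support of $a_{-1,\cdot}$: the one-point case $\al+\be=1$ (so $\al=1-\be$) yields $\d_{1-\be,\be}$, the two-point case $\al+\be=0$ contains the exceptional point $(0,0)$, and $\al+\be\le-1$ gives only coboundaries.

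The essential difficulty is confined to the low-weight exceptional points, where small integers annihilate several coefficients in the recursions above and thereby enlarge the homogeneous solution space. At $(0,1)$ and at $(1,0)$ the $D(L_2)$-recursion acquires one extra homogeneous solution, whose propagation is exactly $\d^{1}_{0,1}$, respectively $\d^{1}_{1,0}$; this accounts for the jump from the generic contribution $\bC\d_{1-\be,\be}$ to a two-dimensional one. The hardest case is $(\al,\be)=(0,0)$, where $D(L_0)=0$ is reached only after subtracting $\d^{1}_{0,0}$, the recursions degenerate maximally, and one must verify that the resulting cocycle lies in $\Inn(\W,\mathcal{F}_{0}\ot\mathcal{F}_{0})_0\oplus\bigoplus_{i=1}^{5}\bC\d^{i}_{0,0}$ with no further independent class. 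For every remaining $(\al,\be)$ with $\al+\be\le0$ one checks that $D(L_{\pm2})$ is forced to be inner, so that $D$ is a coboundary; combined with the vacuous range $\al+\be>1$ (where $D(L_{-1})\neq0$ is impossible), this yields the stated list.
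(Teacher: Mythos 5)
Your proposal is correct and follows essentially the same route as the paper: the paper's proof of this lemma is literally ``Similar to the proof of Lemma \ref{L3.01}'', and your outline is exactly that computation mirrored --- the $m=1$ case of (\ref{1.6ccc}) with $a_{1,\cdot}=0$ pins $D(L_{-1})$ down to a one-parameter family supported on $[\be-1,-\al]$ (forcing $\al+\be\le 1$), the relations (\ref{1.6c})--(\ref{1.6cc}) determine $D(L_{\pm2})$ up to the solutions already classified in Lemma \ref{L3.0}, and the inductions coming from $[L_{n-1},L_1]$ and $[L_{-1},L_{-n+1}]$ propagate to all of $\W$. Your remark that the involution $L_n\mapsto -L_{-n}$, $v_i\ot v_j\mapsto v_{-i}\ot v_{-j}$ does \emph{not} allow one to quote Lemma \ref{L3.01} verbatim --- because the standing normalization of $D(L_1)$ is not preserved, which is precisely why the two lemmas have different conclusions at $(0,1)$ and on the part $\be\ge 2$ of the line $\al+\be=1$ --- is a correct and worthwhile clarification of a point the paper leaves implicit.
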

\begin{proof} Similar to the proof of Lemma \ref{L3.01}.
\end{proof}

\subsection{ $D(L_{\pm1})\neq 0$}

Next, we always assume $D(L_{-1})\neq 0$ and denote
$$p_{-1}=\min\{i|a_{-1,i}\neq 0\}, \quad\, q_{-1}=\max\{i|a_{-1,i}\neq 0\}.$$

\begin{lemm}\label{L3.03}
If $D(L_{0})=0$ and $D(L_{\pm1})\neq 0$, we have
$$
 D\in
\begin{cases}
   \Inn(\W, \mathcal{F}_{\al}\ot \mathcal{F}_{\be})_{0}\oplus\bC\d_{0,0}^{1}\oplus\bC\d_{0,0}^{2}\oplus\bC\d_{0,0}^{3}\oplus\bC\d_{0,0}^{4}\oplus\bC\d_{0,0}^{5},& (\al,\be)=(0,0);\\
   \Inn(\W, \mathcal{F}_{\al}\ot \mathcal{F}_{\be})_{0}\oplus\bC\d_{0,1}\oplus\bC\d_{0,1}^{1},&(\al,\be)=(0,1);\\
   \Inn(\W, \mathcal{F}_{\al}\ot \mathcal{F}_{\be})_{0}\oplus\bC\d_{1,0}\oplus\bC\d_{1,0}^{1},& (\al,\be)=(1,0);\\
   \Inn(\W, \mathcal{F}_{\al}\ot \mathcal{F}_{\be})_{0}\oplus\bC\d_{0,2},& (\al,\be)=(0,2);\\
   \Inn(\W, \mathcal{F}_{\al}\ot \mathcal{F}_{\be})_{0}\oplus\bC\d_{2,0},& (\al,\be)=(2,0);\\
   \Inn(\W, \mathcal{F}_{\al}\ot \mathcal{F}_{\be})_{0}\oplus\bC\d_{1,1},&(\al,\be)=(1,1);\\
   \Inn(\W, \mathcal{F}_{\al}\ot \mathcal{F}_{\be})_{0},& \text{otherwise}.
   \end{cases}
$$
\end{lemm}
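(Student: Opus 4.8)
The plan is to run the same machine as in the non-integer case (Lemmas \ref{L1.5}--\ref{L1.9}), now adapted to $\al,\be\in\bZ$, where three features complicate matters: the two-term normal form $D(L_1)=a_{1,\be}v_{\be}\ot v_{-\be+1}+a_{1,l+1}v_{l+1}\ot v_{-l}$ (with $\be\le l$) produced by Lemmas \ref{L1.4e1}--\ref{L1.4e4}, the extra vanishing loci of the structure constants in the recurrences, and the additional non-trivial cocycles of Propositions \ref{P3.0}--\ref{P3.1}. First I would specialize (\ref{1.6ccc}) to $m=1$, obtaining
$$(i+1-\al)a_{1,i+1}-(i-1+\be)a_{1,i}=(i-1+\al)a_{-1,i-1}-(i+1-\be)a_{-1,i},\qquad i\in\bZ.$$
Since $D(L_1)$ is supported only at $i=\be$ and $i=l+1$, the left-hand side vanishes for $i\notin\{\be-1,\be,l,l+1\}$, so on that range $\{a_{-1,i}\}$ obeys the two-term recurrence $(i-1+\al)a_{-1,i-1}=(i+1-\be)a_{-1,i}$, whose coefficients vanish precisely at the walls $i=1-\al$ and $i=\be-1$.

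Next I would locate $p_{-1}$ and $q_{-1}$. Evaluating the recurrence at $i=p_{-1}$ and $i=q_{-1}+1$, and using $a_{-1,p_{-1}-1}=a_{-1,q_{-1}+1}=0$ together with $a_{-1,p_{-1}},a_{-1,q_{-1}}\neq0$, forces each of these indices either to lie in $\{\be-1,\be,l,l+1\}$ or to meet a wall, so that $p_{-1}\in\{\be-1,\be,l,l+1\}$ and $q_{-1}\in\{\be-2,\be-1,l-1,l,-\al\}$. With the normalization $\be\le l$ this leaves only finitely many admissible configurations. For each, the boundary instances of the displayed identity at $i=l,l+1$ (and at $i=\be-1,\be$), combined with $a_{1,l+1}\neq0$ and $a_{-1,p_{-1}},a_{-1,q_{-1}}\neq0$, yield a compatibility relation on $(\al,\be)$; as in the proof of Lemma \ref{L1.6}, this relation factors into a product identity. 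Its integer solutions that remain consistent with the canonical normalization and with $D(L_{\pm1})\neq0$ are exactly the six exceptional pairs $(0,0),(0,1),(1,0),(0,2),(2,0),(1,1)$. The candidates on the line $\al+\be=1$ with $\be\neq0,1$ do not survive, since the canonical normalization drives the support of $D(L_1)$ (or of $D(L_{-1})$) onto a wall where its coefficient vanishes, so that $D(L_1)=0$ (or $D(L_{-1})=0$) and the class is already accounted for by Lemmas \ref{L3.01}--\ref{L3.02}. In every remaining configuration the relation forces the extreme coefficients to vanish, contradicting $D(L_{\pm1})\neq0$, so that configuration is vacuous and $D$ is inner there.

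For each surviving exceptional pair I would reconstruct $D$ explicitly. From $D(L_{\pm1})$ I would compute $D(L_{\pm2})$ through (\ref{1.6c}) and (\ref{1.6cc}), the finiteness of the supports truncating these to a handful of terms, and then propagate to all $L_n$ by induction on $|n|$ via $(n-2)D(L_n)=D[L_{n-1},L_1]$ and $(n-2)D(L_{-n})=D[L_{-1},L_{-n+1}]$, exactly as in Lemma \ref{L3.0}. Matching the resulting closed forms against the cocycles of Propositions \ref{P3.0}--\ref{P3.1}, modulo an inner derivation $D_v$ with $v=\sum x_iv_i\ot v_{-i}$, then yields the stated direct-sum decomposition. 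The main obstacle is precisely this combinatorial bookkeeping: the configurations of $(p_{-1},q_{-1})$ relative to $\be$, $l$, $1-\al$ and the signs of the integers $\al,\be$ are numerous, and---paralleling the forced value $\be=\tfrac12$ in Lemma \ref{L1.6}---several compatibility identities degenerate to rational equalities whose only admissible integer roots are the listed pairs. A further delicate point is that, unlike the situations of Lemmas \ref{L1.8+1}--\ref{L1.9}, the exceptional pairs here (notably $(0,0)$, where $\d_{0,0}^1,\dots,\d_{0,0}^4$ all satisfy $D(L_{\pm1})\neq0$) cannot be trivialized on $L_{\pm1}$ by any coboundary, so they must be rebuilt directly rather than reduced to Lemmas \ref{L3.01}--\ref{L3.02}; their non-triviality is already furnished by Propositions \ref{P3.0}--\ref{P3.1}.
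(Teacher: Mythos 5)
Your proposal follows essentially the same route as the paper: specialize (\ref{1.6ccc}) to $m=1$, pin down $p_{-1}$ and $q_{-1}$ from the resulting two-term recurrence and its walls, normalize $D(L_{\pm1})$ by inner derivations so as to reduce most configurations to Lemmas \ref{L3.0}--\ref{L3.02}, and reconstruct $D$ directly (via $D(L_{\pm2})$ and induction on $|n|$) in the few configurations, such as $\al=\be=0$, that cannot be so reduced. The one caveat is that the decisive work --- the case-by-case verification of which $(p_{-1},q_{-1})$ configurations actually occur and what each forces --- is only sketched here, and the exceptional pairs emerge in the paper mostly by inheritance from Lemmas \ref{L3.0}--\ref{L3.02} after normalization rather than from a single factored compatibility identity as your outline suggests.
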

\begin{proof}
Let $m=1$ in (\ref{1.6ccc}). Then
\begin{equation}\label{131}
 (i+1-\al)a_{1,i+1}-(i-1+\be)a_{1,i}=(i-1+\al)a_{-1,i-1}-(i+1-\be)a_{-1,i}.
\end{equation}
Let $i=\be-1, \be, \be+1, l, l+1$. Then
\begin{equation}\label{132}
(\be-\al)a_{1,\be}=(\al+\be-2)a_{-1,\be-2},
\end{equation}
\begin{equation}\label{133}
(\be+1-\al)a_{1,\be+1}-(2\be-1)a_{1,\be}=(\al+\be-1)a_{-1,\be-1}-a_{-1,\be},
\end{equation}
\begin{equation}\label{134}
(\be+2-\al)a_{1,\be+2}-2\be a_{1,\be+1}=(\al+\be)a_{-1,\be}-2a_{-1,\be+1},
\end{equation}
\begin{equation}\label{135}
(l+1-\al)a_{1,l+1}-(l-1+\be) a_{1,l}=(l-1+\al)a_{-1,l-1}-(l+1-\be)a_{-1,l},
\end{equation}
\begin{equation}\label{136}
-(l+\be) a_{1,l+1}=(l+\al)a_{-1,l}-(l+2-\be)a_{-1,l+1},
\end{equation}
and
\begin{equation}\label{137}
(i-1+\al)a_{-1,i-1}=(i+1-\be)a_{-1,i}, \quad\quad\, i\leq \be-2, \quad\, or\quad\, i\geq l+2.
\end{equation}
Assume $p_{-1}\leq \be-2$. Let $i=p_{-1}$ in (\ref{137}). Then $p_{-1}=\be-1\geq \be-2$, which is a contradiction. Therefore, $p_{-1}\geq \be-1$, which suggests
$$a_{-1, i}=0, \quad\quad\, i\leq \be-2.$$
Hence (\ref{132}) becomes
\begin{equation}\label{138}
(\be-\al)a_{1,\be}=0.
\end{equation}

{\bf Case 1:} $\al\neq \be$. Then $a_{1,\be}=0$ and $D(L_1)=a_{1,l+1}v_{l+1}\otimes\,v_{-l}$, where $\be\leq l$.
Moreover,  by  (\ref{1.13f1})-(\ref{1.13f3}),
\begin{eqnarray*}
& &(l+1-\al)a_{1,l+1}=(l-1+\al)a_{-1,l-1}-(l+1-\be)a_{-1,l}, \\   
& &-(l+\be)a_{1,l+1}=(l+\al)a_{-1,l}-(l+2-\be)a_{-1,l+1},\\
& & (i-1+\al)a_{-1,i-1}=(i+1-\be)a_{-1,i}, \quad\quad\,i\neq l,l+1.
\end{eqnarray*}
Similar to the proof of Lemma \ref{L1.5}, we have the following result:
$$p_{-1}=\be-1, l, \, or\, l+1;\quad\,  q_{-1}=-\al   \quad\, if\,\, q_{-1}\neq l-1,l.$$

{\bf Subcase 1.1:} $p_{-1}=\be-1$ and $q_{-1}=l-1$. Then $a_{-1,i}=0$ for $i\geq l$. According to (\ref{1.13f1})-(\ref{1.13f2}), we have
\begin{equation}\label{139}
(l+1-\al)a_{1,l+1}=(l-1+\al)a_{-1,l-1},\quad\, (l+\be)a_{1,l+1}=0.
\end{equation}
Since $D(L_{1})\neq 0$, that is, $a_{1,l+1}\neq 0$, we have $\be=-l.$
Since $\be\leq l$, we have $\be\leq 0$ and
$$D(L_1)=a_{1,-\be+1}v_{-\be+1}\otimes\,v_{\be}, \quad\, D(L_{-1})=\sum\limits_{i=\be-1}^{-\be-1}a_{-1,i}v_{i}\otimes\,v_{-i-1}.$$

{\bf (1.1.1)} If $\be=0$ and $\al\neq 0, 1$, we have $a_{-1,-1}=-a_{1,1}$ and
$$D(L_1)=a_{1,1}v_{1}\otimes\,v_{0}=L_{1}\cdot\left(-\frac{a_{1,1}}{\al}v_{0}\otimes\,v_{0}\right), \quad\,
D(L_{-1})=-a_{1,1}v_{-1}\otimes\,v_{0}=L_{-1}\cdot\left(-\frac{a_{1,1}}{\al}v_{0}\otimes\,v_{0}\right).$$
Replace $D$ by $D-D_{inn}$, where $D_{inn}(L_{n})=L_{n}\cdot\left(-\frac{a_{1,1}}{\al}v_{0}\otimes\,v_{0}\right)$ for all $n\in\bZ$, then
$D(L_{0})=D(L_{\pm 1})=0$.  According to the result of  {Lemma } \ref{L3.0}, we know $D\in \Inn(\W, \mathcal{F}_{\al}\ot \mathcal{F}_{\be})_{0}\oplus\delta_{\al,2}\bC \d_{2,0}.$

{\bf (1.1.2)} If $\be=0$  and  $\al=1$ , we have
$$D(L_1)=L_{1}\cdot (-a_{1,1}v_{0}\otimes\,v_{0}), \, D(L_{-1})=L_{-1}\cdot (-a_{1,1}v_{0}\otimes\,v_{0})+(a_{1,1}+a_{-1,-1})v_{-1}\otimes\,v_{0}.$$
Set $a=a_{1,1}+a_{-1,-1}$. Then it suffices to consider $D(L_{0})=D(L_{1})=0 $ and $ D(L_{-1})=av_{-1}\otimes\,v_{0}.$
By  {Lemma } \ref{L3.0} and {Lemma } \ref{L3.02}, we have
$D\in \Inn(\W, \mathcal{F}_{\al}\ot \mathcal{F}_{\be})_{0}\oplus\bC \d_{1,0}\oplus\bC \d_{1,0}^{1}.$

{\bf (1.1.3)} If $\be<0$, we have
$$D(L_1)=a_{1,-\be+1}v_{-\be+1}\otimes\,v_{\be}, \quad\, D(L_{-1})=\sum\limits_{i=\be-1}^{-\be-1}a_{-1,i}v_{i}\otimes\,v_{-i-1},$$
where
\begin{equation}\label{142}
a_{-1,\be+i}=\left(\prod\limits_{t=1}^{i+1}\frac{\al+\be-2+t}{t}\right)a_{-1,\be-1},\quad\, 0\leq i\leq -2\be-1.
\end{equation}
Since $a_{-1, -\be-1}\neq 0$, we have $\al+\be+i\neq 0$ for $-1\leq i\leq -2\be-2$. Hence $\al-i\neq 0$ for $\be+2\leq i\leq -\be+1$.
Moreover, by (\ref{139}), we have
\begin{equation}\label{143}
a_{1,-\be+1}=-\frac{\al-\be-1}{\al+\be-1}a_{-1,-\be-1}.
\end{equation}
Assume $\al=\be+1$. Then $a_{1,-\be+1}=0$ by (\ref{143}) and $D(L_{1})=0$, which is a contradiction. Therefore $\al\neq \be+1.$
Then $D(L_{1})\neq 0$ and we deduce that $$D(L_{\pm1})=L_{\pm1}\cdot \left(\sum\limits_{i=\be}^{-\be}c_{i}v_{i}\otimes v_{-i}\right),$$
where
$$c_{\be}=\frac{a_{-1,\be-1}}{\al-\be},\quad\, c_{\be+i}=\left(\prod\limits_{t=1}^{i}\frac{\al+\be+t-1}{t}\right)\frac{a_{-1,\be-1}}{\al-\be},\quad\, 1\leq i\leq -2\be.$$
According to {Lemma} \ref{L3.0}, we have $D\in \Inn(\W, \mathcal{F}_{\al}\ot \mathcal{F}_{\be})_{0}$.

{\bf Subcase 1.2:} $p_{-1}=\be-1$ and $q_{-1}=l$. Then
$$D(L_{1})=a_{1,l+1}v_{l+1}\otimes\,v_{-l}\neq 0, \quad\quad\, D(L_{-1})=\sum\limits_{i=\be-1}^{l}a_{-1,i}v_{i}\otimes\,v_{-i-1}\neq 0.$$
According to (\ref{1.13f1})-(\ref{1.13f3}), we have
\begin{eqnarray}
& &(l+1-\al)a_{1,l+1}=(l-1+\al)a_{-1,l-1}-(l+1-\be)a_{-1,l},   \label{181}\\
& &(l+\be)a_{1,l+1}=-(l+\al)a_{-1,l},\label{182}\\
& & (i-1+\al)a_{-1,i-1}=(i+1-\be)a_{-1,i}, \quad\quad\,i\neq l,l+1. \label{183}
\end{eqnarray}
Assume $\be=-l$, by (\ref{182}), we have $(l+\al)a_{-1,l}=0$. Since $a_{-1,l}=a_{-1,q_{-1}}\neq 0$, we have $\al=-l=\be$, which is a contradiction.
{Therefore} $\be\neq -l.$ By (\ref{182}), we have
$$a_{1,l+1}=-\frac{l+\al}{l+\be}a_{-1,l}, \quad\, D(L_{1})=-\frac{l+\al}{l+\be}a_{-1,l}v_{l+1}\otimes\,v_{-l}.$$
Since $a_{1,l+1}\neq 0$, we have $\al\neq -l$. Hence we obtain
\begin{equation}\label{184}
\frac{(\al-\be)(\al+\be-1)}{l+\be}a_{-1,l}=(l-1+\al)a_{-1,l-1}.
\end{equation}
Let $i=\be+t(-1\leq t\leq l-\be-1)$ in (\ref{183}). Then
\begin{equation}\label{185}
(t+1)a_{-1,\be+t}=(\al+\be+t-1)a_{-1,\be+t-1}, \quad\quad\,  -1\leq t\leq l-\be-1.
\end{equation}

{\bf (1.2.1)} $\al+\be\neq 1$. Then $(l-1+\al)a_{-1,l-1}\neq 0$ via (\ref{184}).

If $\be=l$, we have $\be\neq 0$ and
$$D(L_{1})=a_{1,\be+1}v_{\be+1}\otimes\,v_{-\be},\quad D(L_{-1})=a_{-1,\be-1}v_{\be-1}\otimes\,v_{-\be}+a_{-1,\be}v_{\be}\otimes\,v_{-\be-l},$$
where
$$a_{1,\be+1}=-\frac{\al+\be}{2\be}a_{-1,\be}, \quad  a_{-1,\be-1}=\frac{\al-\be}{2\be}a_{-1,\be}. $$
Obviously,
$$
D(L_{\pm 1})=L_{\pm 1}\cdot\left(\frac{a_{-1,\be}}{2\be}v_{\be}\otimes\,v_{-\be}\right).$$
Using {Lemma } \ref{L3.0},{ we know when $\be=2$ we have $D\in \Inn(\W, \mathcal{F}_{\al}\ot \mathcal{F}_{\be})_{0}\oplus\bC\d_{0, 2}.$  Otherwise,  $D\in \Inn(\W, \mathcal{F}_{\al}\ot \mathcal{F}_{\be})_{0}.$}

If $\be<l$, we have $$a_{-1,l}=\frac{(l+\be)(l-1+\al)}{(\al-\be)(\al+\be-1)}a_{-1,l-1}.$$
Using (\ref{185}), we get
\begin{equation*}
a_{-1,\be+i}=\frac{\al+\be+i-1}{i+1}\cdots\frac{\al+\be}{2}\cdot\frac{\al+\be-1}{1}\cdot{a_{-1,\be-1}}, \quad\quad\, 0\leq i\leq l-1-\be.
\end{equation*}
We check that $D(L_{\pm 1})=L_{\pm 1}\cdot\left(\sum\limits_{i=\be}^{l}c_{i}v_{i}\otimes\,v_{-i}\right)$, where
$$c_{\be}=\frac{a_{-1,\be-1}}{\al-\be}, \quad\, c_{\be+i}=\frac{\al+\be+i-1}{i}\cdots\cdot\frac{\al+\be+1}{2}\cdot\frac{\al+\be}{1}\cdot\frac{a_{-1,\be-1}}{\al-\be},\quad\, 0\leq i\leq l-\be.$$
Using the result of {Lemma } \ref{L3.0}, we have $D\in \Inn(\W, \mathcal{F}_{\al}\ot \mathcal{F}_{\be})_{0}$.

{\bf (1.2.2)}  $\al+\be=1$. Since $\be\leq l$, we have $l+1-\be\neq 0$. Then
$$D(L_{1})=a_{1,l+1}v_{l+1}\otimes\,v_{-l}=L_{1}\cdot\left(-\frac{a_{1,l+1}}{l+1-\be} \sum\limits_{i=\be}^{l} v_{i}\otimes\,v_{-i}\right).$$
Replacing $D$ by $D-D_{inn}$, where
$$D_{inn}(L_{n})=L_{n}\cdot\left(-\frac{a_{1,l+1}}{l+1-\be} \sum\limits_{i=\be}^{l} v_{i}\otimes\,v_{-i}\right)$$ for all $n\in\bZ$, we have $D(L_{1})=0$. From {Lemma } \ref{L3.0} and {Lemma } \ref{L3.02}, we obtain
$$D\in \Inn(\W, \mathcal{F}_{\al}\ot \mathcal{F}_{\be})_{0}\oplus\delta_{\al,0}\delta_{\be,1}\bC\d^{1}_{0,1}\oplus\delta_{\al,1}\delta_{\be,0}\bC\d^{1}_{1, 0}.
$$

{\bf Subcase 1.3:} $p_{-1}=\be-1$ and $q_{-1}=-\al$. Then  $\al+\be\leq 1$,
$$D(L_{1})=a_{1,l+1}v_{l+1}\otimes\,v_{-l}, \quad\quad\, D(L_{-1})=\sum\limits_{i=\be-1}^{-\al}a_{-1,i}v_{i}\otimes\,v_{-i-1}.$$
Notice that $-\al \neq l-1,l$. Then $-\al<l-1$ or $-\al\geq l+1$.

{\bf (1.3.1)} If  $-\al<l-1$, we have $(l+\be)a_{1,l+1}=0$ and $(l+1-\al)a_{1,l+1}=0$ by (\ref{1.13f1}) and (\ref{1.13f2}). Since $a_{1,l+1}\neq 0$, we have
$l=-\be, \al=1-\be.$
Since $\be\leq l$, we have $\be\leq 0.$  So
$D(L_{1})=a_{1,-\be+1}v_{-\be+1}\otimes\,v_{\be} $ and $D(L_{-1})=a_{-1,\be-1}v_{\be-1}\otimes\,v_{-\be}.$
Then
\begin{eqnarray*}
 &&D(L_{-1})=L_{-1}\cdot\left(\frac{a_{-1,\be-1}}{1-2\be}\sum\limits_{i=\be}^{-\be}v_{i}\otimes\,v_{-i}\right),\\
  && D(L_{1})=L_{1}\cdot\left(\frac{a_{-1,\be-1}}{1-2\be}\sum\limits_{i=\be}^{-\be}v_{i}\otimes\,v_{-i}\right)+(a_{-1,\be-1}+a_{1,-\be+1})v_{-\be+1}\otimes\,v_{\be}.
\end{eqnarray*}
Replacing $D$ by $D-D_{inn}$, where
$$D_{inn}(L_{n})=L_{n}\cdot \left(\frac{a_{-1,\be-1}}{1-2\be}\sum\limits_{i=\be}^{-\be}v_{i}\otimes\,v_{-i}\right)$$
for all $n\in\bZ$, we have $D(L_{-1})=0$. By {Lemma} \ref{L3.0} and {Lemma} \ref{L3.01}, we  get
$$D\in \Inn(\W, \mathcal{F}_{\al}\ot \mathcal{F}_{\be})_{0}\oplus\bC \d_{1-\be, \be}\oplus\delta_{\al,1}\delta_{\be,0}\bC\d^{1}_{1, 0}.
$$

{\bf (1.3.2)} If  $-\al\geq l+1$, we have $\al+\be\leq -1$ by $\be\leq l$.
Assume $\al+\be=-k$, where $k\geq 1$.  Then $-\al=\be+k\geq l+1$. So $l+1-k\leq \be\leq l.$
For $\be=l+t-k$, where $1\leq t\leq k$, we have $\al=-l-t$ and
$D(L_{1})=L_{1}\cdot\left(\sum\limits_{i=1}^{t}c_{i}v_{l+i}\otimes\,v_{-l-i}\right),$
where $$c_{1}=\frac{a_{1,l+1}}{k+1-t},\quad\, c_{i}=\left(\prod\limits_{j=2}^{i}\frac{t+1-j}{t-k-j}\right)\frac{a_{1,l+1}}{k+1-t}, \quad\quad\, 2\leq i\leq t.$$
Replace $D$ by $D-D_{inn}$, where $D_{inn}(L_{n})=L_{n}\cdot \left(\sum\limits_{i=1}^{t}c_{i}v_{l+i}\otimes\,v_{-l-i}\right)$ for all $n\in\bZ$. Then
$D(L_{1})=0$. Using the results of {Lemma} \ref{L3.0} and {Lemma} \ref{L3.02}, we get $D\in \Inn(\W, \mathcal{F}_{\al}\ot \mathcal{F}_{\be})_{0}$.

{\bf Subcase 1.4:} $p_{-1}=l$ and $q_{-1}=l$. Then $D(L_{1})=a_{1,l+1}v_{l+1}\otimes\,v_{-l}$ and $D(L_{-1})=a_{-1,l}v_{l}\otimes\,v_{-l-1}.$
By (\ref{1.13f1})-(\ref{1.13f2}), we have
\begin{equation}\label{198}
(l+1-\al)a_{1,l+1}=-(l+1-\be)a_{-1,l}, \quad\quad\, (l+\be)a_{1,l+1}=-(l+\al)a_{-1,l},
\end{equation}
\begin{equation*}
(\al+\be-1)(a_{1,l+1}+a_{-1,l})=0.
\end{equation*}
Assume $\al+\be\neq 1$. Then $a_{-1,l}=-a_{1,l+1}\neq 0$. By (\ref{198}), we have $\al=\be$,which is a contradiction. Hence $\al+\be=1.$  So $\be\neq -l, l+1$ by (\ref{198}) and
$a_{1,l+1}=-\frac{l+1-\be}{l+\be}a_{-1,l}.$
Then
$$D(L_{1})=L_{1}\cdot\left(\sum\limits_{i=\be}^{l}\frac{a_{-1,l}}{l+\be}v_{i}\otimes\,v_{-i}\right), \quad\,
D(L_{-1})=L_{-1}\cdot\left(\sum\limits_{i=\be}^{l}\frac{a_{-1,l}}{l+\be}v_{i}\otimes\,v_{-i}\right)-\frac{a_{-1,l}}{l+\be}v_{\be-1}\otimes\,v_{-\be}.$$
Replacing $D$ by $D-D_{inn}$, where
$$D_{inn}(L_{n})=L_{n}\cdot \left(\sum\limits_{i=\be}^{l}\frac{a_{-1,l}}{l+\be}v_{i}\otimes\,v_{-i}\right)$$ for all $n\in\bZ$, we have
$D(L_{1})=0$ and $D(L_{-1})\neq 0$ . Using {Lemma} \ref{L3.02}, we  get
$$
D\in \Inn(\W, \mathcal{F}_{\al}\ot \mathcal{F}_{\be})_{0}\oplus\bC \d_{1-\be, \be}\oplus\delta_{\al,1}\delta_{\be,0}\bC\d^{1}_{1, 0}.
$$

{\bf Subcase 1.5:} $p_{-1}=l$ and $q_{-1}=-\al>l$.
Then $\al<-l$ and
$$D(L_{1})=a_{1,l+1}v_{l+1}\otimes\,v_{-l}, \quad\quad\, D(L_{-1})=\sum\limits_{i=l}^{-\al}a_{-1,i}v_{i}\otimes\,v_{-i-1}.$$
By (\ref{1.13f1})-(\ref{1.13f2}), we have
\begin{equation}\label{199}
(l+1-\al)a_{1,l+1}=-(l+1-\be)a_{-1,l}, \quad\quad\, (l+\be)a_{1,l+1}=-(l+\al)a_{-1,l}+(l+2-\be)a_{-1,l+1},
\end{equation}
\begin{equation}\label{200}
(\al+\be-1)(a_{1,l+1}+a_{-1,l})=(l+2-\be)a_{-1,l+1}.
\end{equation}
Assume $l+1-\al=0$. Since $a_{1,l+1}\neq 0, a_{-1,l}\neq 0$, we have $l+1-\be=0$ via (\ref{199}). Hence $\al=\be$, a contradiction.
Therefore, $l+1-\al\neq 0$. Moreover, $l+1-\be\neq 0$ and
$$a_{1,l+1}=-\frac{l+1-\be}{l+1-\al}a_{-1,l}\neq -a_{-1,l}, \quad\, a_{1,l+1}+a_{-1,l}=\frac{\be-\al}{l+1-\al}a_{-1,l}\neq 0.$$

{\bf (1.5.1)} If $\al+\be\neq 1$, by (\ref{200}), we have $l+2-\be\neq 0$ and
$$a_{-1,l+1}=\frac{\al+\be-1}{l+2-\be}\cdot \frac{\be-\al}{l+1-\al}a_{-1,l}\neq 0.$$

{When} $-\al=l+1$, we have
$a_{1,l+1}=-\frac{\al+\be}{2\al}a_{-1,l}$, $a_{-1,l+1}=-\frac{\al-\be}{2\al}a_{-1,l}$
and
$$D(L_{1})=-\frac{a_{-1,l}}{2\al}\Big((\al+\be)v_{-\al}\otimes\,v_{\al+1}\Big), \,\, D(L_{-1})=-\frac{a_{-1,l}}{2\al}\Big(-2\al v_{-\al-1}\otimes\,v_{\al}+(\al-\be)v_{-\al}\otimes\,v_{\al-l}\Big).$$
Then we check that $D(L_{\pm 1})=L_{\pm 1}\cdot\left(\frac{a_{-1,l}}{2\al} v_{-\al}\otimes\,v_{\al}\right)$.
Replace $D$ by $D-D_{inn}$, where $D_{inn}(L_{n})=L_{n}\cdot \left(\frac{a_{-1,l}}{2\al} v_{-\al}\otimes\,v_{\al}\right)$ for all $n\in\bZ$. Then
$D(L_{\pm 1})=0$. Using the result of {Lemma} \ref{L3.0} and noticing $\be\leq l$, we get
$D\in \Inn(\W, \mathcal{F}_{\al}\ot \mathcal{F}_{\be})_{0}.$

{When} $-\al=l+k$, where $k\geq 2$, we have
$$a_{1,l+1}=-\frac{\al+\be+k-1}{2\al+k-1}a_{-1,l}, \quad\quad\,a_{-1,l+1}=-\frac{\al+\be-1}{\al+\be+k-2}\cdot\frac{\al-\be}{2\al+k-1}a_{-1,l}.$$
Moreover, by (\ref{1.13f3}), we have
$$a_{-1,l+i}=-\left(\prod\limits_{t=k+1-i}^{k-1}\frac{t}{\al+\be+t-2}\right)\frac{\al+\be-1}{\al+\be+k-2}\cdot\frac{\al-\be}{2\al+k-1}a_{-1,l}, \quad\quad\, 2\leq i\leq k.$$
Then
$$D(L_{\pm 1})= L_{\pm 1}\cdot\left(\sum\limits_{i=0}^{k-1}c_{i}v_{-\al-i}\otimes\,v_{\al+i}\right),$$
where
$$c_{k-1}=\frac{1}{2\al+k-1}a_{-1,l}, \quad\, c_{i-1}=\frac{i}{\al+\be+i-1}c_{i}, \quad\, i=1, \cdots, k-1.$$
Replacing $D$ by $D-D_{inn}$, where $D_{inn}(L_{n})=L_{n}\cdot\left (\sum\limits_{i=0}^{k-1}c_{i}v_{-\al-i}\otimes\,v_{\al+i}\right)$ for all $n\in\bZ$,  we have
$D(L_{\pm 1})=0$.By {Lemma} \ref{L3.0} and  $\be\leq l$, we get
$D\in \Inn(\W, \mathcal{F}_{\al}\ot \mathcal{F}_{\be})_{0}.$

{\bf (1.5.2)} If $\al+\be=1$, we have $(l+2-\be)a_{-1,l+1}=0$ by (\ref{200}).

{When} $-\al=l+1$, we have $a_{-1,l+1}\neq 0$. So $l=\be-2$, $\be\neq 1$ and  $a_{1,l+1}=\frac{1}{2\be-2}a_{-1,l}.$
Then
$$D(L_{1})=L_{1}\cdot\left(-\frac{a_{-1,l}}{2\be-2} v_{\be-1}\otimes\,v_{-\be+1}\right),$$
$$D(L_{-1})=L_{-1}\cdot\left(-\frac{a_{-1,l}}{2\be-2} v_{\be-1}\otimes\,v_{-\be+1}\right)+\left(a_{-1, l+1}+\frac{2\be-1}{2\be-2}a_{-1,l}\right)v_{\be-1}\otimes\,v_{-\be}.$$
Replace $D$ by $D-D_{inn}$, where $D_{inn}(L_{n})=L_{n}\cdot\left(-\frac{a_{-1,l}}{2\be-2} v_{\be-1}\otimes\,v_{-\be+1}\right)$ for all $n\in\bZ$. Then
$D(L_{1})=0$. By {Lemma} \ref{L3.0} and {Lemma} \ref{L3.02} and note $\be\leq l$, we get
$D\in \Inn(\W, \mathcal{F}_{\al}\ot \mathcal{F}_{\be})_{0}.$

{When} $-\al=l+k$, where $k\geq 2$, we have $\be=1-\al=l+k+1$ and $l+2-\be=1-k\leq -1$. So $a_{-1,l+1}=0$ by (\ref{200}).
Then we check that $$D(L_{1})= L_{1}\cdot\left(-\sum\limits_{i=l+1}^{l+k}\frac{a_{1,l+1}}{k}v_{i}\otimes\,v_{-i}\right).$$
Replace $D$ by $D-D_{inn}$, where $D_{inn}(L_{n})=L_{n}\cdot\left(-\sum\limits_{i=l+1}^{l+k}\frac{a_{1,l+1}}{k}v_{i}\otimes\,v_{-i}\right)$ for all $n\in\bZ$. Then
$D(L_{1})=0$. From {Lemma} \ref{L3.0}, {Lemma} \ref{L3.02} and $\be\leq l$, we get
$D\in \Inn(\W, \mathcal{F}_{\al}\ot \mathcal{F}_{\be})_{0}.$

{\bf Subcase 1.6:} $p_{-1}=l+1$ and $q_{-1}=-\al\geq l+1$.
By (\ref{1.13f1})-(\ref{1.13f2}), we have
\begin{equation}\label{201}
(l+1-\al)a_{1,l+1}=0, \quad\, (l+\be)a_{1,l+1}=(l+2-\be)a_{-1,l+1}.
\end{equation}
Since $a_{1,l+1}\neq 0$, we have $\al=l+1$. Note that $-\al\geq l+1$. Then $l\leq -1$ and $\al\leq 0$.
{ So $\be\leq l\leq -1$ and $\al+\be\leq -1$.}
Hence $D(L_{1})=a_{1,\al}v_{\al}\otimes\,v_{-\al+1} $ and $ D(L_{-1})=\sum\limits_{i=\al}^{-\al}a_{-1,i}v_{i}\otimes\,v_{-i-1}.$
By (\ref{201}), we have
\begin{equation}\label{202}
 (\al+\be-1)a_{1,\al}=(\al+1-\be)a_{-1,\al}.
\end{equation}
Since $\al+\be\leq -1$  and $a_{1,\al}\neq 0$,  we have $\be\neq \al+1$ according to (\ref{202}).

{\bf (1.6.1):} $\al=0$. Since $\be\leq -1$, we have $a_{-1,0}=-a_{1,0}$ via (\ref{202}). So
$$D(L_{1})=a_{1,0}v_{0}\otimes\,v_{1}, \quad\quad\,  D(L_{-1})=-a_{1,0}v_{0}\otimes\,v_{-1}.$$
Then
$D(L_{\pm 1})= L_{\pm 1}\cdot\left(-\frac{a_{1,0}}{\be}v_{0}\otimes\,v_{0}\right).$
Replacing $D$ by $D-D_{inn}$, where $D_{inn}(L_{n})=L_{n}\cdot\left(-\frac{a_{1,0}}{\be}v_{0}\otimes\,v_{0}\right)$ for all $n\in\bZ$, we have
$D(L_{\pm 1})=0$. By {Lemma} \ref{L3.0}, we get
$D\in \Inn(\W, \mathcal{F}_{\al}\ot \mathcal{F}_{\be})_{0}.$

{\bf (1.6.2):} $\al<0$. By (\ref{1.13f3}), we have
$$(i+\al-1)a_{-1, i-1}=(i+1-\be)a_{-1,i}, \quad\quad\, i=\al+1, \cdots, -\al.$$
Then
$$a_{-1,-\al-k}=\left(\prod\limits_{t=1}^{k}\frac{\al+\be+t-2}{t}\right)a_{-1,-\al},\quad\quad\, 1\leq k\leq -2\al. $$
Since $a_{-1, -\al}\neq 0$, we have $\al+\be+i\neq 0$ for $-1\leq i\leq -2\al-2$.
In addition, by (\ref{202}), we have $a_{1,\al}=\frac{\al+1-\be}{\al+\be-1}a_{-1,\al}$.
Then we check that
$D(L_{\pm 1})=L_{\pm 1}\cdot \left(\sum\limits_{i=\al}^{-\al}c_{i}v_{i}\otimes\,v_{-i}\right),$
where
$$c_{-\al}=-\frac{a_{-1, -\al}}{\al-\be},\quad\quad\, c_{-\al-k}=\left(\prod\limits_{t=1}^{k}\frac{\al+\be+t-1}{t}\right)c_{-\al},\quad\quad\, 1\leq k\leq -2\al.$$
Replacing $D$ by $D-D_{inn}$, where $D_{inn}(L_{n})=L_{n}\cdot \left(\sum\limits_{i=\al}^{-\al}c_{i}v_{i}\otimes\,v_{-i}\right)$ for all $n\in\bZ$,  we have
$D(L_{\pm 1})=0$. By {Lemma} \ref{L3.0}, we get
$D\in \Inn(\W, \mathcal{F}_{\al}\ot \mathcal{F}_{\be})_{0}$.

\bigskip

{\bf Case 2:} $\al=\be$.
By (\ref{131}), we have
\begin{equation}\label{203}
 (i+1-\be)(a_{1,i+1}+a_{-1,i})=(i-1+\be)(a_{1,i}+a_{-1,i-1}).
\end{equation}
Since $\{a_{-1,i}\neq 0\mid i\in\bZ\}$ is finite, we deduce that $a_{-1, i}=0$ for $i\leq \be-2$ via (\ref{137}).

{\bf Subcase 2.1:}  $\al=\be>0$. Then $l>0$ and
$$D(L_1)=a_{1,\be}v_{\be}\otimes\,v_{-\be+1}+a_{1,l+1}v_{l+1}\otimes\,v_{-l}
=a_{1,\be}v_{\be}\otimes\,v_{-\be+1}+L_{1}\cdot\left(\sum\limits_{i=\be}^{l}c_{i}v_{i}\otimes\,v_{-i}\right),$$
where
$$c_{l}=-\frac{a_{1,l+1}}{l+\be}, \quad\, (i-\be)c_{i}=(i-1+\be)c_{i-1},\quad\, i=\be+1, \cdots, l.$$
Replacing $D$ by $D-D_{inn}$, where $D_{inn}(L_{n})=L_{n}\cdot \left(\sum\limits_{i=\be}^{l}c_{i}v_{i}\otimes\,v_{-i}\right)$ for all $n\in\bZ$,
we have $D(L_{1})=a_{1,\be}v_{\be}\otimes\,v_{-\be+1}.$
By (\ref{203}), we have
\begin{equation*}
a_{-1,\be}=(2\be-1)(a_{1,\be}+a_{-1,\be-1}), \quad\, (i+1-\be)a_{-1,i}=(i-1+\be)a_{-1,i-1}, \quad\, i\neq \be-1,\be.
\end{equation*}
Since $\{a_{-1,i}\neq 0\mid i\in\bZ\}$ is finite, we deduce that $a_{-1, i}=0$ for $i\leq \be-2$ or $i\geq \be$. Furthermore,
$(2\be-1)(a_{1,\be}+a_{-1,\be-1})=0$. So $a_{-1,\be-1}=-a_{1,\be}$ and $D(L_{-1})=-a_{1,\be}v_{\be-1}\otimes\,v_{-\be}.$
By (\ref{1.6c}), we have
\begin{equation}\label{204}
a_{2,\be-1}=\frac{\be}{2\be-3}a_{1,\be},
\end{equation}
\begin{equation}\label{205}
(i+1-\be)a_{2,i+1}=(i-2+\be)a_{2,i},\quad\, i\neq \be-1,\be,\be+1.
\end{equation}
Since $\{a_{2,i}\neq 0\mid i\in\bZ\}$ is finite, we deduce that $a_{2, i}=0$ for $i\leq \be-1$ via (\ref{205}).
Using (\ref{204}), we have $a_{1,\be}=0$. Therefore, $D(L_{\pm 1})=0$. By {Lemma} \ref{L3.0}, we have
$$D\in \Inn(\W, \mathcal{F}_{\al}\ot \mathcal{F}_{\be})_{0}\oplus\delta_{\al,1}\delta_{\be,1}\bC\d_{1,1}.$$

{\bf Subcase 2.2:}  $\al=\be=0$. Then $l\geq0$ and
$D(L_1)=a_{1,0}v_{0}\otimes\,v_{1}+a_{1,l+1}v_{l+1}\otimes\,v_{-l}.$

If $l=0$, we have
$D(L_1)=a_{1,0}v_{0}\otimes\,v_{1}+a_{1,1}v_{1}\otimes\,v_{0}.$
If $l>0$, we have
$$D(L_1)=a_{1,0}v_{0}\otimes\,v_{1}+a_{1,l+1}v_{l+1}\otimes\,v_{-l}
=a_{1,0}v_{0}\otimes\,v_{1}+a_{1,l+1}v_{1}\otimes\,v_{0}+L_{1}\cdot\left(\sum\limits_{i=1}^{l}-\frac{a_{1,l+1}}{i}v_{i}\otimes\,v_{-i}\right).$$
Then we assume that $D(L_{1})=a_{1,0}v_{0}\otimes\,v_{1}+a_{1,1}v_{1}\otimes\,v_{0}$. Using (\ref{131}), we obtain
\begin{equation}\label{206}
a_{1,0}+a_{1,1}=-a_{-1,-1}-a_{-1,0},
\end{equation}
$$a_{-1,-2}=a_{-1, 1}=0,\quad\, (i+1)a_{-1,i}=(i-1)a_{-1,i-1}, \quad\, i\neq 0,\pm1.$$
It is easy to deduce that $a_{-1,i}=0$ for $i\neq 0,{-1}$. Hence
$D(L_{-1})=a_{-1,-1}v_{-1}\otimes\,v_{0}+a_{-1,0}v_{0}\otimes\,v_{-1}.$
By (\ref{1.6c}), we have
\begin{equation}\label{207}
 2a_{2,0}+a_{2,1}=3a_{1,0}-a_{-1,0},\quad\quad\,  a_{2,1}+2a_{2,2}=3a_{1,1}-a_{-1,-1},
\end{equation}
$$(i+1)a_{2,i+1}=(i-2)a_{2,i},\quad\quad\, i\neq 0,1,2.$$
Then  $a_{2,i}=0$ for $i\neq 0,1,2$. So
$D(L_{2})=a_{2,0}v_{0}\otimes\,v_{2}+a_{2,1}v_{1}\otimes\,v_{1}+a_{2,2}v_{2}\otimes\,v_{0}.$
By (\ref{1.6cc}), we have
\begin{equation}\label{208}
2a_{-2,0}+a_{-2,-1}=3a_{-1,0}-a_{1,0},\quad\quad\,  a_{-2,-1}+2a_{-2,-2}=3a_{-1,-1}-a_{1,1},
\end{equation}
$$ (i-1)a_{-2,i-1}=(i+2)a_{-2,i}, \quad\quad\, i\neq -2,-1,0.$$
Then  $a_{-2,i}=0$ for $i\neq -2,-1,0$. So
$$D(L_{-2})=a_{-2,-2}v_{-2}\otimes\,v_{0}+a_{-2,-1}v_{-1}\otimes\,v_{-1}+a_{-2,0}v_{0}\otimes\,v_{-2}.$$
Let $m=2$ and $i=0,1$ in (\ref{1.6ccc}). Then
\begin{equation}\label{209}
a_{2,2}+a_{2,0}=-(a_{-2,-2}+a_{-2,0}), \quad\quad\, a_{-2,-1}=-a_{2,1}.
\end{equation}
Using (\ref{206})-(\ref{209}), we have
$$D=(2a_{1,0}-a_{2,0})\d_{0,0}^{1}+(a_{2,0}-a_{1,0})\d_{0,0}^{2}
+(2a_{1,1}-a_{2,2})\d_{0,0}^{3}+(a_{2,2}-a_{1,1})\d_{0,0}^{4}-a_{2,1}\d_{0,0}^{5}$$
acts on $L_{\pm1}, L_{\pm 2}$. Hence
$$D=(2a_{1,0}-a_{2,0})\d_{0,0}^{1}+(a_{2,0}-a_{1,0})\d_{0,0}^{2}
+(2a_{1,1}-a_{2,2})\d_{0,0}^{3}+(a_{2,2}-a_{1,1})\d_{0,0}^{4}-a_{2,1}\d_{0,0}^{5}.$$
That is,
$$D\in \Inn(\W, \mathcal{F}_{\al}\ot \mathcal{F}_{\be})_{0}\oplus\bC\d_{0,0}^{1}\oplus\bC\d_{0,0}^{2}\oplus\bC\d_{0,0}^{3}\oplus\bC\d_{0,0}^{4}\oplus\bC\d_{0,0}^{5}.$$

{\bf Subcase 2.3:}  $\al=\be<0$.

{\bf  (2.3.1):} $\be\leq l<-\be$.
If $l=\be$, we have
$$D(L_{1})=a_{1,\be}v_{\be}\otimes\,v_{-\be+1}+a_{1,l+1}v_{l+1}\otimes\,v_{-l}
=a_{1,\be}v_{\be}\otimes\,v_{-\be+1}+L_{1}\cdot\left(-\frac{a_{1,l+1}}{2\be}v_{\be}\otimes\,v_{-\be}\right).$$
If $\be< l<-\be$, we have
$$D(L_{1})=a_{1,\be}v_{\be}\otimes\,v_{-\be+1}+a_{1,l+1}v_{l+1}\otimes\,v_{-l}
=a_{1,\be}v_{\be}\otimes\,v_{-\be+1}+L_{1}\cdot\left(\sum\limits_{i=\be}^{l}c_{i}v_{i}\otimes\,v_{-i}\right),$$
where
$$c_{l}=-\frac{a_{1,l+1}}{l+\be}, \quad\, (i-\be)c_{i}=(i-1+\be)c_{i-1}, \quad\quad\, i=\be+1, \cdots, l.$$
Therefore, for $\be\leq l<-\be$, we assume that $D(L_{1})=a_{1,\be}v_{\be}\otimes\,v_{-\be+1}$.

By (\ref{131}), we have
\begin{equation}\label{210}
a_{-1,\be-2}=0, \quad\quad\, a_{-1,\be}=(2\be-1)(a_{1,\be}+a_{-1,\be-1}),
\end{equation}
$$(i-1+\be)a_{-1,i-1}=(i+1-\be)a_{-1,i}, \quad\quad\, i\neq \be-1,\be.$$
Then $a_{-1,i}=0$ for $i\leq \be-2$ or $i\geq -\be+1$ and
\begin{equation}\label{211}
a_{-1,\be+i}=\left(\prod\limits_{t=0}^{i-1}\frac{2\be+t}{t+2}\right)a_{-1,\be},\quad\quad\, 1\leq i\leq -2\be.
\end{equation}
By (\ref{1.6c}), we have
\begin{equation}\label{212}
 (i-2+2\be)a_{-1,i-2}-(i+1-2\be)a_{-1,i}+3a_{1,i}=(i+1-\be)a_{2,i+1}-(i-2+\be)a_{2,i}.
\end{equation}
Then we obtain
\begin{equation}\label{213}
 \be a_{-1,\be-1}=(3-2\be)a_{2,\be-1},\quad\, \be a_{-1,-\be}=(3-2\be)a_{2,-\be+3},
\end{equation}
\begin{equation*}
 (i+1-\be)a_{2,i+1}=(i-2+\be)a_{2,i},\quad\quad\, i\leq \be-1\quad\, \mbox{or}\quad\, i\geq -\be+3.
\end{equation*}
Since $\{a_{2,i}\neq 0\mid i\in\bZ\}$ is finite, we deduce that $a_{2, i}=0$ for $i\leq \be-1$ or $i\geq -\be+3$.
Using (\ref{213}), we have $$a_{-1,\be-1}=a_{-1,-\be}=0.$$
By (\ref{211}), $a_{-1,-\be}=\left(\prod\limits_{t=0}^{-2\be-1}\frac{2\be+t}{t+2}\right)a_{-1,\be}$. Since $\be<0$, we have $\prod\limits_{t=0}^{-2\be-1}\frac{2\be+t}{t+2}\neq 0$. Hence $a_{-1,\be}=0$ and $a_{-1,i}=0$ for $\be+1\leq i\leq -\be$ via (\ref{211}).
By (\ref{210}), we have $a_{1,\be}=0$.
Therefore, $D(L_{\pm1})=0.$
 According to the result of {Lemma} \ref{L3.0}, we have $$D\in \Inn(\W, \mathcal{F}_{\al}\ot \mathcal{F}_{\be})_{0}.$$

{\bf  (2.3.2):} $l\geq -\be$.

If $l=-\be$, we have
$D(L_{1})=a_{1,\be}v_{\be}\otimes\,v_{-\be+1}+a_{1,-\be+1}v_{-\be+1}\otimes\,v_{\be}.$
If $l>-\be$, we have
$$D(L_{1})=a_{1,\be}v_{\be}\otimes\,v_{-\be+1}+{(2\be-1)}x_{-\be+1}v_{-\be+1}\otimes\,v_{\be}+L_{1}\cdot\left(\sum\limits_{i=-\be+1}^{l}x_{i}v_{i}\otimes\,v_{-i}\right),$$
where
$$x_{l}=-\frac{a_{1,l+1}}{l+\be}, \quad\, (i-\be)x_{i}=(i-1+\be)x_{i-1}, \quad\quad\, i=-\be+2, \cdots, l.$$
Therefore,  for $l\geq -\be$, we assume that
$D(L_{1})=a_{1,\be}v_{\be}\otimes\,v_{-\be+1}+a_{1,-\be+1}v_{-\be+1}\otimes\,v_{\be}$.

By (\ref{131}), we have
\begin{equation}\label{214}
 (1-2\be)a_{1,\be}=(2\be-1)a_{-1,\be-1}-a_{-1,\be}, \quad\, (2\be-1)a_{1,-\be+1}=a_{-1,-\be-1}-(2\be-1){a_{-1,-\be}},
\end{equation}
$$a_{-1,\be-2}=a_{-1,-\be+1}=0, \quad\,(i-1+\be)a_{-1,i-1}=(i+1-\be)a_{-1,i}, \quad\quad\, i\leq \be-2\quad\, \mbox{or}\quad\, i\geq -\be+2.$$
Then  $a_{-1,i}=0$ for $i\leq \be-2$ or $i\geq -\be+1$.

By (\ref{1.6c}), we have
\begin{equation}\label{215}
 \be a_{-1,\be-1}=(3-2\be)a_{2,\be-1},\quad\, \be a_{-1,-\be}=(3-2\be)a_{2,-\be+3},
\end{equation}
\begin{equation*}
 (i+1-\be)a_{2,i+1}=(i-2+\be)a_{2,i},\quad\quad\, i\leq \be-2\quad\, \mbox{or}\quad\, i\geq -\be+3.
\end{equation*}
Since $\{a_{2,i}\neq 0\mid i\in\bZ\}$ is finite, we get $a_{2, i}=0$ for $i\leq \be-1$ or $i\geq -\be+3$.
Using (\ref{215}), we have $a_{-1,\be-1}=a_{-1,-\be}=0.$
Let $i=\be-1$ in (\ref{131}). Then  $a_{1,\be}=0$.
Furthermore, using (\ref{131}), we have
\begin{equation}\label{216}
 (2\be-1)a_{1,-\be+1}=a_{-1,-\be-1},
\end{equation}
\begin{equation}\label{217}
 (i-1+\be)a_{-1,i-1}=(i+1-\be)a_{-1,i}, \quad\quad\, i\neq -\be, -\be+1.
\end{equation}
Let $i=\be,\be+1,\cdots, -\be-1$ in (\ref{217}) and use $a_{-1,\be-1}=0$, then we deduce that $a_{-1,i}=0$ for $\be\leq i\leq -\be-1$.
Hence $a_{1,-\be+1}=0$. Therefore, $D(L_{\pm 1})=0$.
According to {Lemma} \ref{L3.0}, we have $$D\in \Inn(\W, \mathcal{F}_{\al}\ot \mathcal{F}_{\be})_{0}.$$
\end{proof}

\subsection{}

By Lemma \ref{L1.3}, we assume $D(L_{0})=v_{0}\otimes\,v_{0}$ in this case.
\begin{lemm}\label{L3.4}
For $(\al,\be)=(0,0)$, if $D(L_{0})=v_{0}\otimes\,v_{0}$, we have  $$D\in\Inn(\W,\mathcal{F}_{0}\ot \mathcal{F}_{0})_{0}\oplus\bC\d_{0,0}^{1}\oplus\bC\d_{0,0}^{2}\oplus\bC\d_{0,0}^{3}\oplus\bC\d_{0,0}^{4}\oplus\bC\d_{0,0}^{5}.$$
\end{lemm}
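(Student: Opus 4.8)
The plan is to reduce immediately to the already-settled case $D(L_0)=0$ by subtracting off a single explicit cocycle. The crucial observation is that, by the definition given in Proposition~\ref{P3.0}, the cocycle $\d_{0,0}^1$ satisfies $\d_{0,0}^1(L_0)=v_0\ot v_0$. Hence, setting $D'=D-\d_{0,0}^1$, we obtain a degree-zero derivation $D'\in\Der(\W,\mathcal{F}_0\ot\mathcal{F}_0)_0$ with $D'(L_0)=0$. This is precisely the normalization announced at the start of Section~\ref{section 5}, so the work now shifts entirely onto $D'$.

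First I would invoke the classification of degree-zero cocycles that annihilate $L_0$. This classification is distributed across Lemmas~\ref{L3.0}, \ref{L3.01}, \ref{L3.02} and \ref{L3.03}, according to the four mutually exhaustive possibilities for the pair $\big(D'(L_1),D'(L_{-1})\big)$: both zero; only $D'(L_1)$ nonzero; only $D'(L_{-1})$ nonzero; or both nonzero. Specializing each of these lemmas to $(\al,\be)=(0,0)$ yields in every case
$$D'\in\Inn(\W,\mathcal{F}_0\ot\mathcal{F}_0)_0\oplus\bC\d_{0,0}^1\oplus\bC\d_{0,0}^2\oplus\bC\d_{0,0}^3\oplus\bC\d_{0,0}^4\oplus\bC\d_{0,0}^5,$$
so the same membership holds for $D'$ without any residual hypothesis.

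Finally, since $\d_{0,0}^1$ is itself one of the five summands on the right-hand side, adding it back gives
$$D=D'+\d_{0,0}^1\in\Inn(\W,\mathcal{F}_0\ot\mathcal{F}_0)_0\oplus\bC\d_{0,0}^1\oplus\bC\d_{0,0}^2\oplus\bC\d_{0,0}^3\oplus\bC\d_{0,0}^4\oplus\bC\d_{0,0}^5,$$
which is exactly the assertion. There is no genuine obstacle in this statement: all the substantive content lives in the earlier lemmas, and the present result is the routine bookkeeping step that lifts the $D(L_0)=0$ classification to the single remaining value $D(L_0)=v_0\ot v_0$ permitted by Lemma~\ref{L1.3}. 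The only point requiring a modicum of care is to confirm that the four cited cases genuinely exhaust every behavior of $D'$ on $L_{\pm1}$, so that the combined classification applies to $D'$ unconditionally.
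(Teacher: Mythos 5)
Your proposal is correct and follows exactly the paper's own argument: subtract $\d_{0,0}^{1}$ (which satisfies $\d_{0,0}^{1}(L_0)=v_0\otimes v_0$) to reduce to the normalized case $D(L_0)=0$, then invoke Lemmas \ref{L3.0}--\ref{L3.03}, whose case split on the vanishing of $D(L_{\pm 1})$ is indeed exhaustive. No further comment is needed.
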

\begin{proof}
For $(\al,\be)=(0,0)$, if $D(L_{0})=v_{0}\otimes\,v_{0}$,  replacing $D$ by $D-\d_{0,0}^{1}$, we have $D(L_{0})=0$. Using the results of Lemma \ref{L3.0}-Lemma \ref{L3.03}, we have $$D\in\Inn(\W,\mathcal{F}_{0}\ot \mathcal{F}_{0})_{0}\oplus\bC\d_{0,0}^{1}\oplus\bC\d_{0,0}^{2}\oplus\bC\d_{0,0}^{3}\oplus\bC\d_{0,0}^{4}\oplus\bC\d_{0,0}^{5}.$$
\end{proof}

By Lemma \ref{L3.0}-Lemma \ref{L3.4}
our main result in this section can be stated as follows.

\begin{prop}\label{MP2}
For $(\al,\be)\in\bZ^2$, we have
\begin{eqnarray*}
  {\rm H}^1(\W, \mathcal{F}_{\al}\ot \mathcal{F}_{\be})=
   \begin{cases}
   \oplus_{i=1}^5\bC \d_{0,0}^i,& (\al,\be)=(0,0);\\
   \bC\d_{0,1}\oplus\bC \d_{0,1}^1,& (\al,\be)=(0,1);\\
   \bC\d_{1,0}\oplus\bC \d_{1,0}^1,&(\al,\be)=(1,0);\\
   \bC\d_{0,2},& (\al,\be)=(0,2);\\
   \bC\d_{2,0},& (\al,\be)=(2,0);\\
   \bC\d_{1,1},& (\al,\be)=(1,1);\\
   \bC\d_{\al,\be},& \al+\be=1, \be\neq 0,1;\\
   0,& \text{otherwise},
   \end{cases}
\end{eqnarray*}
where  $\d_{0,0}^i (i=1,2\cdots,5), \d_{1,0}^{1}, \d_{0,1}^{1}, \d_{\al,\be}$ are defined in Proposition \ref{P1.6--}, Proposition \ref{P3.0}, and  Proposition \ref{P3.1}.
\end{prop}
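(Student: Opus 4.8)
The plan is to assemble Proposition \ref{MP2} from the case analysis already carried out in Lemmas \ref{L3.0}--\ref{L3.4}, after a standard reduction to degree zero. First I would invoke Lemma \ref{L1.1}, which gives the $\bZ$-grading $\Der(\W,\mathcal{F}_{\al}\ot\mathcal{F}_{\be})=\bigoplus_{k}\Der(\W,\mathcal{F}_{\al}\ot\mathcal{F}_{\be})_k$, together with Lemma \ref{L1.2}, which shows every nonzero-degree component is inner. Combining these yields
$$
{\rm H}^1(\W,\mathcal{F}_{\al}\ot\mathcal{F}_{\be})
=\Der(\W,\mathcal{F}_{\al}\ot\mathcal{F}_{\be})/\Inn(\W,\mathcal{F}_{\al}\ot\mathcal{F}_{\be})
\cong \Der(\W,\mathcal{F}_{\al}\ot\mathcal{F}_{\be})_0/\Inn(\W,\mathcal{F}_{\al}\ot\mathcal{F}_{\be})_0,
$$
so the entire computation reduces to describing the degree-zero derivations modulo degree-zero coboundaries. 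By Lemma \ref{L1.3}, any $D\in\Der(\W,\mathcal{F}_{\al}\ot\mathcal{F}_{\be})_0$ satisfies $D(L_0)=a\,\delta_{\al,0}\delta_{\be,0}v_0\ot v_0$; hence for $(\al,\be)\neq(0,0)$ we automatically have $D(L_0)=0$, while for $(\al,\be)=(0,0)$ the only additional possibility $D(L_0)=v_0\ot v_0$ is exactly the situation treated in Lemma \ref{L3.4}.

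Next I would organize the case $D(L_0)=0$ according to the vanishing of $D(L_{\pm1})$, which splits exhaustively into four mutually exclusive situations: $D(L_{\pm1})=0$ (Lemma \ref{L3.0}); $D(L_{-1})=0$, $D(L_1)\neq0$ (Lemma \ref{L3.01}); $D(L_1)=0$, $D(L_{-1})\neq0$ (Lemma \ref{L3.02}); and $D(L_{\pm1})\neq0$ (Lemma \ref{L3.03}). Each of these lemmas already expresses the relevant $D$ as an element of $\Inn(\W,\mathcal{F}_{\al}\ot\mathcal{F}_{\be})_0$ plus an explicit linear combination of the constructed cocycles $\d_{0,0}^i$, $\d_{0,1}^1$, $\d_{1,0}^1$, $\d_{0,2}$, $\d_{2,0}$, $\d_{1,1}$, and $\d_{\al,\be}$ (the latter for $\al+\be=1$). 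For a fixed $(\al,\be)$ I would simply take the union of the outputs over the four lemmas: this produces a candidate generating set for $\Der(\W,\mathcal{F}_{\al}\ot\mathcal{F}_{\be})_0/\Inn(\W,\mathcal{F}_{\al}\ot\mathcal{F}_{\be})_0$. The upper bound on the dimension is then immediate, while the matching lower bound follows because the listed operators were already verified to be non-trivial 1-cocycles and linearly independent in Propositions \ref{P1.6--}, \ref{P3.0}, and \ref{P3.1}.

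The delicate bookkeeping step—and where I expect the only real care to be needed—is consistency across the overlapping normalizations. Each lemma presupposes a particular pattern of vanishing of $D(L_{\pm1})$ after subtracting a suitable coboundary, so I would check that these normalizations are compatible: that the coboundary adjustments used to enter one lemma's hypotheses do not alter the cohomology class, and that the non-trivial cocycles surfacing in different cases (for instance the five $\d_{0,0}^i$ appearing in Lemmas \ref{L3.0}, \ref{L3.01}, \ref{L3.02}, \ref{L3.03}, and \ref{L3.4}) are literally the same classes rather than distinct ones being double-counted. Concretely, for $(\al,\be)=(0,0)$ I must confirm that the five-dimensional answer is not inflated by the separate appearance of these cocycles across cases, which is guaranteed by the independence statement in Proposition \ref{P3.0}; and for $\al+\be=1$ with $\be\neq0,1$ I must confirm that the single class $\bC\d_{\al,\be}$ emerging in Lemmas \ref{L3.02} and \ref{L3.03} coincides. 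Once this consistency is verified, reading off the dimension for each $(\al,\be)$ produces precisely the tabulated values, completing the proof.
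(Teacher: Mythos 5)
Your proposal is correct and follows essentially the same route as the paper: the paper's own proof of Proposition \ref{MP2} is precisely the assembly you describe, namely reducing to $\Der(\W,\mathcal{F}_{\al}\ot\mathcal{F}_{\be})_0$ via Lemmas \ref{L1.1}--\ref{L1.3}, splitting on the vanishing of $D(L_{\pm1})$ according to Lemmas \ref{L3.0}--\ref{L3.03}, handling $D(L_0)=v_0\ot v_0$ via Lemma \ref{L3.4}, and using the linear independence from Propositions \ref{P1.6--}, \ref{P3.0}, and \ref{P3.1} for the lower bound. Your additional attention to the compatibility of the normalizations and to avoiding double-counting is a sound (if implicit in the paper) bookkeeping check.
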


By utilizing Proposition \ref{MP1} and Proposition \ref{MP2}, we present the main theorem of this paper.

\begin{theo}\label{T}
 For $\al,\be\in\bC$, we have
\begin{equation*}
\dim \mathrm{H}^1(\mathcal{W}, \mathcal{F}_{\alpha} \otimes \mathcal{F}_{\beta}) = \begin{cases}
5, & (\alpha, \beta) = (0,0); \\
2, & (\alpha, \beta) = (0,1) \text{ or } (1,0); \\
1, & (\alpha, \beta) = (0,2), (2,0), (1,1), \text{ or } \alpha + \beta = 1, \beta \neq 0,1; \\
0, & \text{otherwise}.
\end{cases}
\end{equation*}
\end{theo}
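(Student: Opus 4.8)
The plan is to obtain the dimension formula purely by assembling the two classification results already proved, namely Proposition~\ref{MP1} for the regime $(\al,\be)\notin\bZ^2$ and Proposition~\ref{MP2} for the regime $(\al,\be)\in\bZ^2$, and then reading off the dimension of each cohomology space from its displayed set of generators. First I would observe that $\bC^2$ is the disjoint union of $\{(\al,\be)\notin\bZ^2\}$ and $\{(\al,\be)\in\bZ^2\}$, so that every pair falls under exactly one of the two propositions and no pair is counted twice.

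For the non-integer regime, Proposition~\ref{MP1} gives ${\rm H}^1(\W,\mathcal{F}_{\al}\ot\mathcal{F}_{\be})=\bC\,\delta_{\al+\be,1}\,\d_{\al,\be}$, where $\d_{\al,\be}=\d_{1-\be,\be}$ is the non-trivial $1$-cocycle constructed in Proposition~\ref{P1.6--}. Hence this space is one-dimensional exactly when $\al+\be=1$ and is zero otherwise. Along the line $\al+\be=1$, a pair lying outside $\bZ^2$ cannot have exactly one integral entry (since $\al+\be=1$ with one entry in $\bZ$ forces the other into $\bZ$ as well), so both entries are non-integral and in particular $\be\neq0,1$. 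Thus these pairs land precisely on the sub-line $\al+\be=1,\ \be\neq0,1$ of the theorem with dimension $1$, while every remaining non-integer pair contributes dimension $0$.

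For the integer regime I would read the dimension directly off the explicit generating sets enumerated in Proposition~\ref{MP2}, invoking the linear independence of those generators established earlier: the five cocycles $\d_{0,0}^1,\dots,\d_{0,0}^5$ for $(0,0)$ (Proposition~\ref{P3.0}, where their independence is verified by evaluation on $L_1,L_2$), the pairs $\{\d_{0,1}^1,\d_{0,1}\}$ and $\{\d_{1,0}^1,\d_{1,0}\}$ for $(0,1)$ and $(1,0)$ (Proposition~\ref{P3.1}), and the single generators $\d_{0,2},\d_{2,0},\d_{1,1}$ and $\d_{\al,\be}$ in the remaining listed cases. This produces the dimensions $5$, $2$, $2$, $1$, $1$, $1$, $1$, and $0$, matching the statement; in particular the integral pairs with $\al+\be=1,\ \be\neq0,1$ contribute dimension $1$ in full agreement with the non-integral line handled above.

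The only point demanding care—more a matter of bookkeeping than a genuine obstacle—is verifying that the case distinctions of Propositions~\ref{MP1} and~\ref{MP2} together tile $\bC^2$ without gap or overlap, and that the excluded boundary values $\be=0,1$ on the line $\al+\be=1$ are correctly absorbed into the separate entries $(1,0)$ and $(0,1)$ of dimension $2$ rather than double-counted. Once this consistency is confirmed, the dimension formula of Theorem~\ref{T} follows immediately from the two propositions.
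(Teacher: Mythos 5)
Your proposal is correct and is essentially the paper's own argument: the paper derives Theorem \ref{T} directly by combining Proposition \ref{MP1} and Proposition \ref{MP2} and reading off the dimensions from the exhibited generators, whose independence modulo coboundaries is built into the direct-sum statements of those propositions. Your extra bookkeeping (that the two regimes tile $\bC^2$ and that a pair on the line $\al+\be=1$ with one integral entry must lie in $\bZ^2$) is a correct and slightly more explicit version of what the paper leaves implicit.
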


\section{Applications}\label{section 6}

In this section, we discuss some  applications of Theorem \ref{T}.

 In \cite{D1}, Drinfeld introduced the notion of Lie bialgebras as a framework for investigating the Yang-Baxter equations. A Lie bialgebra, denoted as $(L, \delta)$, consists of a Lie algebra $L$ equipped with a Lie cobracket $\delta: L \rightarrow L \wedge L$ that satisfies the co-Jacobi identity, as described in \cite{M1,M2}. The mapping $\delta$ operates as a 1-cocycle on the Lie algebra $L$, with coefficients in the $L$-module $L \wedge L$. This action of $L$ on $L \wedge L$ is carried out through the adjoint representation. When $\delta$ is a 1-coboundary with coefficients in $\wedge^2 L$, the Lie bialgebra $(L, \delta)$ is termed a coboundary Lie bialgebra.

\begin{theo}[\cite{NT,Ta}]\label{P0} Let $\W$ be the Witt algebra. Then
${\rm H}^1({\W},\W\wedge \W)=0$. In particular, all  Lie bialgebra structures on $\W$ are coboundary.
\end{theo}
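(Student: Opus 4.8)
The plan is to deduce this statement as an immediate corollary of the Main Theorem (Theorem~\ref{T}) by identifying the relevant module. First I would observe that the adjoint module $\W$ is isomorphic to the tensor density module $\mathcal{F}_{-1}$: comparing the bracket $[L_m,L_n]=(m-n)L_{m+n}$ with the action $L_m\cdot v_n=-(-m+n)v_{m+n}=(m-n)v_{m+n}$ on $\mathcal{F}_{-1}$, the assignment $L_n\mapsto v_n$ defines a $\W$-module isomorphism $\W\cong\mathcal{F}_{-1}$. Consequently, as $\W$-modules,
\[
\W\otimes\W\;\cong\;\mathcal{F}_{-1}\otimes\mathcal{F}_{-1}.
\]

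Since we work over $\bC$, the transposition $x\otimes y\mapsto y\otimes x$ on $\W\otimes\W$ is $\W$-equivariant, so its $\pm1$-eigenspaces give a direct sum decomposition of $\W$-modules $\W\otimes\W=S^2\W\,\oplus\,(\W\wedge\W)$. Because the first algebraic cohomology is additive over finite direct sums of coefficient modules, this yields
\[
\mathrm{H}^1(\W,\mathcal{F}_{-1}\otimes\mathcal{F}_{-1})\;=\;\mathrm{H}^1(\W,S^2\W)\,\oplus\,\mathrm{H}^1(\W,\W\wedge\W).
\]
Now I would apply Theorem~\ref{T} at $(\al,\be)=(-1,-1)$: this pair is none of the exceptional cases and satisfies $\al+\be=-2\neq1$, so $\dim\mathrm{H}^1(\W,\mathcal{F}_{-1}\otimes\mathcal{F}_{-1})=0$. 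As the left-hand side vanishes and $\mathrm{H}^1(\W,\W\wedge\W)$ is a direct summand of it, I conclude $\mathrm{H}^1(\W,\W\wedge\W)=0$.

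For the final assertion, recall that a Lie bialgebra structure on $\W$ is a cobracket $\delta:\W\to\W\wedge\W$ which is in particular a $1$-cocycle for the adjoint action, that is, $\delta\in\Der(\W,\W\wedge\W)$ in the sense of the definition given above. Since $\mathrm{H}^1(\W,\W\wedge\W)=\Der(\W,\W\wedge\W)/\Inn(\W,\W\wedge\W)=0$, every such $\delta$ is a $1$-coboundary, namely $\delta=\delta_r$ with $\delta_r(x)=x\cdot r$ for some $r\in\W\wedge\W$. Hence every Lie bialgebra structure on $\W$ is coboundary.

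As for the main obstacle: there is essentially no hard computation here, since all the real content has been absorbed into Theorem~\ref{T}. The only points requiring care are fixing the correct parameter in the module identification $\W\cong\mathcal{F}_{-1}$ (so that one invokes the theorem at $\al=\be=-1$), and justifying that $\W\wedge\W$ splits off $\W\otimes\W$ as a $\W$-submodule so that its cohomology appears as a direct summand; both are routine in characteristic $0$.
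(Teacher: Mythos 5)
Your proposal is correct and follows exactly the paper's (one-line) argument: identify $\W\cong\mathcal{F}_{-1}$, note that $\W\wedge\W$ is a $\W$-module direct summand of $\mathcal{F}_{-1}\otimes\mathcal{F}_{-1}$, and invoke Theorem \ref{T} at $(\al,\be)=(-1,-1)$, which is a non-exceptional case with $\al+\be=-2\neq 1$. The only difference is that you spell out the details the paper leaves implicit, and all of them check out.
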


\begin{proof}
  It follows from  $\W\cong \mathcal{F}_{-1}$ as $\W$-modules and Theorem \ref{T}.
  \end{proof}

For $\al\in\bC$,  the {\bf Ovsienko-Roger  algebra} $\W(\al)=\W\ltimes \mathcal{F}_{\al}$ \cite{OR2}(see also \cite{GJP}) is the semi-direct product of $\W$ and $\mathcal{F}_\al$ with the following brackets:
$$
[L_m,L_n]=(m-n)L_{m+m},\quad [L_m, v_n]=-(\al m+n)v_{m+n},\quad [v_m,v_n]=0,\quad\forall m,n\in\bZ.
$$

\begin{prop}\label{PropA}
$$
 \dim {\rm H}^1(\W(\al),\W(\al)\ot \W(\al))=\begin{cases} 7,&\al=0;\\
1,&\al=1;\\
1,&\al=\frac{1}{2};\\
0,&{otherwise}.
\end{cases}
$$
\end{prop}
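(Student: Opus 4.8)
The plan is to leverage the semidirect-product structure $\W(\al)=\W\ltimes\mathcal{F}_\al$ together with the main theorem. First I would record the module-theoretic input: as a $\W$-module the adjoint module satisfies $\W\cong\mathcal{F}_{-1}$, so restricting the adjoint action of $\W(\al)$ to the subalgebra $\W$ gives $\W(\al)\cong\mathcal{F}_{-1}\oplus\mathcal{F}_\al$, and hence, as $\W$-modules,
$$
\W(\al)\otimes\W(\al)\cong \mathcal{F}_{-1}\otimes\mathcal{F}_{-1}\ \oplus\ \mathcal{F}_{-1}\otimes\mathcal{F}_\al\ \oplus\ \mathcal{F}_\al\otimes\mathcal{F}_{-1}\ \oplus\ \mathcal{F}_\al\otimes\mathcal{F}_\al.
$$
The essential subtlety is that the full $\W(\al)$-module structure does \emph{not} respect this splitting, because the abelian ideal $\mathcal{F}_\al$ acts nontrivially, e.g. $v_m\cdot(L_i\otimes L_j)=(\al i+m)v_{m+i}\otimes L_j+(\al j+m)L_i\otimes v_{m+j}$. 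Consequently the naive sum of the four $\W$-cohomologies furnished by Theorem \ref{T} is only a first approximation and must be corrected by terms coming from the ideal action.

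The main device is the cohomology of the extension $\mathcal{F}_\al\triangleleft\W(\al)$ with quotient $\W$. Writing $M=\W(\al)\otimes\W(\al)$, the ideal action equips $M$ with the three-step filtration of $\W(\al)$-submodules
$$
\mathcal{F}_\al\otimes\mathcal{F}_\al\ \subset\ \big(\mathcal{F}_\al\otimes\W(\al)+\W(\al)\otimes\mathcal{F}_\al\big)\ \subset\ M,
$$
whose successive quotients are exactly the inflations to $\W(\al)$ of the $\W$-modules $\mathcal{F}_\al\otimes\mathcal{F}_\al$, then $\mathcal{F}_\al\otimes\mathcal{F}_{-1}\oplus\mathcal{F}_{-1}\otimes\mathcal{F}_\al$, then $\mathcal{F}_{-1}\otimes\mathcal{F}_{-1}$ (on each quotient $\mathcal{F}_\al$ acts trivially). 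For such an inflated module $\widetilde N$ the inflation–restriction sequence
$$
0\to{\rm H}^1(\W,N)\to{\rm H}^1(\W(\al),\widetilde N)\to\Hom_\W(\mathcal{F}_\al,N)\xrightarrow{\ \partial\ }{\rm H}^2(\W,N)
$$
reduces each layer to tensor-density data supplied by Theorem \ref{T}, together with the homomorphism space $\Hom_\W(\mathcal{F}_\al,N)$, which simply counts copies of the tensor density module $\mathcal{F}_\al$ inside $N$. I would then assemble ${\rm H}^1(\W(\al),M)$ from the long exact cohomology sequences of the two short exact sequences of the filtration, using Theorem \ref{T} to control the $H^0$, $H^1$ and relevant $H^2$ contributions of the inflated layers.

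Before running this, I would carry out the standard grading reduction: $\W(\al)$ is $\bZ$-graded and finitely generated, so by the analogues of Lemmas \ref{L1.1}–\ref{L1.2} every cohomology class has a degree-zero representative, and it suffices to analyse degree-zero derivations modulo degree-zero coboundaries. This step also isolates the exceptional behaviour at $\al=0$, where $v_0$ becomes central in $\W(\al)$ and $\mathcal{F}_0$ acquires the trivial submodule $\bC v_0$; these phenomena enlarge the spaces $\Hom_\W(\mathcal{F}_0,N)$ and are precisely what produces the jump to dimension $7$.

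The hard part will be computing the connecting and edge homomorphisms — both the maps $\partial\colon\Hom_\W(\mathcal{F}_\al,N)\to{\rm H}^2(\W,N)$ in each inflation–restriction sequence and the connecting maps of the filtration — since it is their ranks that account for the two anomalies relative to the naive count: the \emph{extra} two classes at $\al=0$, and the complete \emph{cancellation} at $\al=2$, where the two middle layers $\mathcal{F}_{-1}\otimes\mathcal{F}_2$ and $\mathcal{F}_2\otimes\mathcal{F}_{-1}$ each contribute to $\W$-cohomology yet fail to survive to $\W(\al)$. I expect to pin these maps down by evaluating them on the explicit $1$-cocycles constructed in Propositions \ref{P1.6--}–\ref{P3.1}. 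Finally, to secure the stated lower bounds I would exhibit explicit $\W(\al)$-cocycles $\W(\al)\to\W(\al)\otimes\W(\al)$ realizing the surviving classes for $\al=0,\tfrac12,1$ and verify their linear independence modulo coboundaries, thereby confirming the values $7,1,1$ and $0$ otherwise.
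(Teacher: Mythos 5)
Your overall strategy---exploit the semidirect product $\W(\al)=\W\ltimes\mathcal{F}_\al$ and feed Theorem \ref{T} into an exact sequence relating $\W(\al)$-cohomology to $\W$-cohomology---is the same as the paper's, but the specific reduction you choose is genuinely different and, as written, leaves the decisive computations undone. The paper applies the Hochschild--Serre five-term sequence \emph{once}, to the full module $M=\W(\al)\ot\W(\al)$, namely
$0\to {\rm H}^1(\W,M^{\mathcal{F}_\al})\to {\rm H}^1(\W(\al),M)\to {\rm H}^1(\mathcal{F}_\al,M)^{\W(\al)}$,
and the whole point is that $M^{\mathcal{F}_\al}=\mathcal{F}_\al\ot\mathcal{F}_\al$: the three summands $\mathcal{F}_{-1}\ot\mathcal{F}_{-1}$, $\mathcal{F}_{-1}\ot\mathcal{F}_\al$, $\mathcal{F}_\al\ot\mathcal{F}_{-1}$ of your $\W$-module decomposition never enter the first term at all. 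The left-hand term is then a single application of Theorem \ref{T}; the right-hand term equals $\Hom_{\W}(\mathcal{F}_\al,M)$ because $\mathcal{F}_\al$ is abelian, and this is an elementary computation ($2$-dimensional for $\al=0$, spanned by $v_m\mapsto v_0\ot v_m$ and $v_m\mapsto v_m\ot v_0$, and zero otherwise); the only remaining check is that these two classes at $\al=0$ lift, which the paper does by exhibiting them in the image of restriction. No ${\rm H}^2$ groups and no connecting homomorphisms of a filtration are ever needed.

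Your filtration-plus-inflation-restriction scheme is valid in principle (the three-step filtration you write down is indeed by $\W(\al)$-submodules with inflated quotients), but it forces you to confront exactly the quantities the paper's route avoids, and these are where your proposal stops being a proof. Concretely: at $\al=2$ the middle layer contributes ${\rm H}^1(\W,\mathcal{F}_{2}\ot\mathcal{F}_{-1})\oplus{\rm H}^1(\W,\mathcal{F}_{-1}\ot\mathcal{F}_{2})$, which is $2$-dimensional by Theorem \ref{T} (both index pairs sum to $1$), and you must prove that both classes die in passing to ${\rm H}^1(\W(\al),M)$; your sequences terminate in groups ${\rm H}^2(\W,N)$ and in connecting maps of the filtration that are computed nowhere in the paper and nowhere in your proposal---``I expect to pin these maps down'' is precisely where the content of the proposition lives. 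The same applies to the surplus at $\al=0$ and to your final step of exhibiting explicit cocycles for $\al=0,\tfrac12,1$. So the gap is not that the approach would fail, but that the one observation that makes the problem tractable---taking $\mathcal{F}_\al$-invariants of the \emph{whole} tensor square before applying any $\W$-cohomology, so that only $\mathcal{F}_\al\ot\mathcal{F}_\al$ survives---is missing, and without it the plan requires second-cohomology input that neither you nor the paper supplies.
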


\begin{proof}
Since $\mathcal{F}_\al$ is an abelian ideal of $\W(\al)$, using   the Hochschild-Serre spectral sequence \cite{We}, we have the following exact sequence:
$$
0\to \mathrm{H}^1(\W,(\W(\al)\ot \W(\al))^{\mathcal{F}_{\al}})\to \mathrm{H}^1(\W(\al),\W(\al)\ot \W(\al))\to \mathrm{H}^1(\mathcal{F}_\al,\W(\al)\ot \W(\al))^{\W(\al)}.
$$
Note that the action of $\W$ on $\mathrm{H}^1(\mathcal{F}_\al,\W(\al)\ot \W(\al))$ is induced by the action of $\W$ on
$\Der(\mathcal{F}_\al,\W(\al)\ot \W(\al)))$ defined by
$$(x\cdot d)(z)=x\cdot d(z)-d[x,z],\quad \forall x\in \W(\al),z\in \W(\al)\ot \W(\al).$$
Since
\begin{eqnarray*}(\W(\al)\ot\W(\al))^{\mathcal{F}_\al}&=&\{v\in \W(\al)\ot\W(\al)\mid x\cdot v=0,\forall x\in \mathcal{F}_\al\}=\mathcal{F}_{\al}\ot \mathcal{F}_{\al},\\
\mathrm{H}^1(\mathcal{F}_\al,\W(\al)\ot\W(\al))^{\W}&=&{\rm Hom}_{\W}(\mathcal{F}_\al,\W(\al)\ot\W(\al)),
\end{eqnarray*}
it suffices to compute
$
\mathrm{H}^1(\W,\mathcal{F}_{\al}\ot \mathcal{F}_{\al})$ and ${\rm Hom}_{\W}(\mathcal{F}_\al,\W(\al)\ot\W(\al))$.
From Theorem \ref{T}, we obtain
$$
\dim \mathrm{H}^1(\W,\mathcal{F}_{\al}\ot \mathcal{F}_{\al})=\begin{cases} 5,&\al=0;\\
1,&\al=1;\\
1,&\al=\frac{1}{2};\\
0,&{otherwise}.
\end{cases}
$$ By direct computation, we have
\begin{equation*}\label{L6-3}
{\rm Hom}_{\W}(\mathcal{F}_\al,\W(\al)\ot\W(\al))=
\begin{cases}
\mathbb{C} \tilde\d_{0}^6\oplus \mathbb{C}\tilde\d_{0}^7,&\alpha=0;\\
0,&\text{otherwise},
\end{cases}
\end{equation*}
where
\begin{equation*}\label{def-6-3}
\tilde\d_{0}^6(v_{m})=v_{0}\ot v_{m}, \quad\, \tilde\d_{0}^7(v_{m})=v_{m}\ot v_{0}, \quad \forall m\in\mathbb{Z}.
\end{equation*}
It is easy to see  $\tilde\d_{0}^6$ and $\tilde\d_{0}^7$ lies in the image of the restriction map. This completes the proof.
\end{proof}
\begin{rema}{\rm The algebra $W(0)$ is isomorphic to the twisted Heisenberg-Virasoro algebra when $\alpha=0$. In \cite{LPZ}, ${\rm H}^1(\W(-1),\W(-1)\ot \W(-1))$ was calculated; however, it is regrettable that the 1-cocycle $\d_{0,0}^5$ was omitted. When $\alpha=-1$, the algebra $\W(-1)$ is isomorphic to the BMS algebra. The calculation of ${\rm H}^1(\W(0),\W(0)\ot \W(0))$ was successfully accomplished in \cite{LSX}.}

\end{rema}
\begin{coro}
$$
{\rm H}^1(\W(\al),\W(\al)\wedge \W(\al))=\begin{cases}
 \bigoplus_{i=1}^3\bC D_0^i,&\al=0;\\
 0,&otherwise,
\end{cases}
$$
where
\begin{eqnarray*}
   & & D_{0}^{1}(L_{m})=v_{m}\otimes\,v_{0}-v_{0}\otimes\,v_{m},\   D_{0}^{2}(L_{m})=m(v_{m}\otimes\,v_{0}-v_{0}\otimes\,v_{m}),\ D_{0}^{3}(v_{m})=v_{m}\ot v_{0}-v_{0}\ot v_{m},\\
   &&D_{0}^{1}(v_m)=D_{0}^{2}(v_m)=D_{0}^{3}(L_m)=0
   \end{eqnarray*}
for $m\in\bZ$. In particular, all Lie bialgebra structures on $W(\al)$ are coboundary if and only if $\al\neq 0$.
\end{coro}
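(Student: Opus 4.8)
The plan is to split the adjoint module by the flip involution and then read off the answer from Proposition \ref{PropA}. Let $\tau\colon x\ot y\mapsto y\ot x$ be the flip on $\W(\al)\ot\W(\al)$; it is an involutive $\W(\al)$-module automorphism commuting with the diagonal action, so
$$\W(\al)\ot\W(\al)=S^2\W(\al)\oplus\big(\W(\al)\wedge\W(\al)\big)$$
is the decomposition into the $(+1)$- and $(-1)$-eigenspaces of $\tau$. Passing to cohomology,
$${\rm H}^1(\W(\al),\W(\al)\ot\W(\al))={\rm H}^1(\W(\al),S^2\W(\al))\oplus{\rm H}^1(\W(\al),\W(\al)\wedge\W(\al)),$$
and the wedge summand is precisely the $(-1)$-eigenspace of the induced involution $\tau_*$. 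Since Proposition \ref{PropA} already provides the total space together with explicit representatives, the problem reduces to computing the eigenvalue of $\tau_*$ on each representative.

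For $\al=0$ the seven generators are $\d_{0,0}^1,\dots,\d_{0,0}^5$ together with $\tilde\d_0^6,\tilde\d_0^7$. Inspecting the formulas gives $\tau\d_{0,0}^1=\d_{0,0}^3$, $\tau\d_{0,0}^2=\d_{0,0}^4$ and $\tau\tilde\d_0^6=\tilde\d_0^7$, while $\d_{0,0}^5$ is $\tau$-invariant. Hence the $(-1)$-eigenspace is spanned by $\d_{0,0}^3-\d_{0,0}^1$, $\d_{0,0}^4-\d_{0,0}^2$ and $\tilde\d_0^7-\tilde\d_0^6$, which are exactly the cocycles $D_0^1,D_0^2,D_0^3$ of the statement; the four remaining independent symmetric combinations $\d_{0,0}^1+\d_{0,0}^3$, $\d_{0,0}^2+\d_{0,0}^4$, $\d_{0,0}^5$, $\tilde\d_0^6+\tilde\d_0^7$ land in ${\rm H}^1(\W(0),S^2\W(0))$. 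This produces the three-dimensional wedge cohomology for $\al=0$.

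For $\al\ne 0$ Proposition \ref{PropA} leaves only $\al=1$ and $\al=\frac12$, each one-dimensional and generated by the image of $\d_{1,1}$, respectively $\d_{\frac12,\frac12}$; for every other $\al$ the cohomology already vanishes. As $\tau_*^2=\mathrm{id}$, a one-dimensional class is automatically an eigenvector, hence either symmetric or antisymmetric, and it survives in the wedge part exactly when its eigenvalue is $-1$. Deciding this sign is the heart of the matter: it amounts to testing whether $\d+\tau\d$ or $\d-\tau\d$ lies in $\Inn(\W,\mathcal{F}_{\al}\ot\mathcal{F}_{\al})$. For $\d_{1,1}$ the representative is itself symmetric ($\tau\d_{1,1}=\d_{1,1}$), so it contributes nothing to ${\rm H}^1(\W(1),\W(1)\wedge\W(1))$. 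The delicate case is $\d_{\frac12,\frac12}$, whose flip differs from $\pm\d_{\frac12,\frac12}$ only by a coboundary; here one must compute $\d_{\frac12,\frac12}\pm\tau\d_{\frac12,\frac12}$ modulo inner derivations to pin the eigenvalue down.

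The Lie bialgebra assertion then follows formally: a cobracket is a $1$-cocycle valued in $\W(\al)\wedge\W(\al)$, so vanishing of this cohomology forces every cobracket to be a coboundary, whereas for $\al=0$ one checks that $D_0^1,D_0^2,D_0^3$ satisfy the co-Jacobi identity and thus yield genuinely non-coboundary structures. The main obstacle throughout is the eigenvalue computation for the $\al=\be$ generators $\d_{1,1}$ and $\d_{\frac12,\frac12}$, i.e.\ the coboundary test $\d\pm\tau\d\in\Inn(\W,\mathcal{F}_{\al}\ot\mathcal{F}_{\al})$.
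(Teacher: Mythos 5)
Your overall strategy --- decompose $\W(\al)\ot\W(\al)$ into the $(\pm1)$-eigenspaces of the flip $\tau$, note that $\mathrm{H}^1$ splits accordingly, and read off the $(-1)$-eigenspace of $\tau_*$ on the generators supplied by Proposition \ref{PropA} --- is exactly the (very terse) route the paper takes, and your treatment of $\al=0$ (the pairs $\d_{0,0}^1\leftrightarrow\d_{0,0}^3$, $\d_{0,0}^2\leftrightarrow\d_{0,0}^4$, $\tilde\d_0^6\leftrightarrow\tilde\d_0^7$ swapped, $\d_{0,0}^5$ fixed) and of $\al=1$ (where $\tau\d_{1,1}=\d_{1,1}$ on the nose) is correct.

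The problem is that you never carry out the one computation you yourself call ``the heart of the matter'': the sign of $\tau_*$ on $[\d_{\frac12,\frac12}]$. As written, the proof is incomplete precisely at the only point where the conclusion is not immediate. Worse, if one actually performs the coboundary test you prescribe, one finds for all $n$ that
\begin{equation*}
\d_{\frac12,\frac12}(L_n)+\tau\,\d_{\frac12,\frac12}(L_n)
=-\tfrac{n}{2}\bigl(v_0\ot v_n+v_n\ot v_0\bigr)
=L_n\cdot(v_0\ot v_0),
\end{equation*}
(e.g. for $n=1$: $\d(L_1)=-\tfrac12 v_0\ot v_1$, so $\d(L_1)+\tau\d(L_1)=-\tfrac12(v_0\ot v_1+v_1\ot v_0)=L_1\cdot(v_0\ot v_0)$), and both sides vanish on $\mathcal{F}_{1/2}$. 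Hence $\d_{\frac12,\frac12}+\tau\d_{\frac12,\frac12}$ is the coboundary $d_{v_0\ot v_0}$, so $\tau_*[\d_{\frac12,\frac12}]=-[\d_{\frac12,\frac12}]$: the class lies in the $(-1)$-eigenspace, and the skew-valued cocycle $\d_{\frac12,\frac12}-\tfrac12 d_{v_0\ot v_0}$ represents a \emph{nonzero} element of $\mathrm{H}^1(\W(\tfrac12),\W(\tfrac12)\wedge\W(\tfrac12))$ (it cannot be a wedge-coboundary, since inflation is injective and $[\d_{\frac12,\frac12}]\neq0$ in the full tensor-square cohomology by Proposition \ref{PropA}). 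This is the opposite of the vanishing asserted in the corollary for $\al=\tfrac12$. So the gap is not cosmetic: filling it honestly either contradicts the statement you are proving or reveals an error upstream (in Proposition \ref{PropA} or in the corollary itself), and you must do the computation explicitly and confront that discrepancy rather than defer it.
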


\begin{proof}
By Proposition \ref{PropA}, we only need to consider $\alpha=0,1,\frac{1}{2}$. By utilizing the explicit non-trivial 1-cocycles in Proposition \ref{P1.6--}, Proposition \ref{P3.0}, and Proposition \ref{P3.1}, one can easily determine all non-trivial 1-cocycles with coefficients in $\mathcal{W}(\alpha)\wedge\mathcal{W}(\alpha)$ through the skew symmetry property.
\end{proof}

The {\bf twisted Schr\"{o}dinger-Virasoro algebra}
$\mathcal{S}$ introduced in \cite{RU},  is a Lie
algebra with $\C$-basis $\{L_n,M_n,Y_n, \mid n \in\Z\}$ and the following Lie brackets,
\begin{eqnarray*}
&&[L_m,\,L_{n}]=(m-n)L_{m+n},\quad [L_m,\,Y_n]=\left(\frac{m}{2}-n\right)Y_{m+n},\quad [L_m,\,M_n]=-nM_{n+m},\\
&&[Y_m,\,Y_{n}]=(m-n)M_{m+n},\quad [Y_m,\,M_n]=[M_m,\,M_{n}]=0
\end{eqnarray*}
for $m,n\in\bZ$. Let $\mathcal{I}={\rm span}_{\bC}\{M_n,Y_n\mid n\in\bZ\}$. It is clear that $\mathcal I$ is an ideal of $\mathcal S$ and $\mathcal I\cong \mathcal{F}_{0}\oplus \mathcal{F}_{-\frac{1}{2}}$ as $\W$-modules.

\begin{prop}
$\dim {\rm H}^1(\mathcal S,\mathcal S\ot \mathcal S )=7$.
\end{prop}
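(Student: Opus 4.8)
The plan is to run the Hochschild--Serre spectral sequence \cite{We} for the ideal $\mathcal{I}$ of $\mathcal{S}$, exactly as in the proof of Proposition \ref{PropA}, using the identification $\mathcal{S}/\mathcal{I}\cong\W$. Writing $V=\mathcal{S}\otimes\mathcal{S}$, this yields the low-degree exact sequence
\begin{equation*}
0\to {\rm H}^1(\W,V^{\mathcal{I}})\to {\rm H}^1(\mathcal{S},V)\to {\rm H}^1(\mathcal{I},V)^{\W}\xrightarrow{\ d_2\ }{\rm H}^2(\W,V^{\mathcal{I}}),
\end{equation*}
so that $\dim{\rm H}^1(\mathcal{S},V)=\dim{\rm H}^1(\W,V^{\mathcal{I}})+\dim\Ker d_2$. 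As a $\W$-module $\mathcal{S}\cong\mathcal{F}_{-1}\oplus\mathcal{F}_{-\frac12}\oplus\mathcal{F}_0$, the three summands being spanned by the $L_n$, the $Y_n$, and the $M_n$ respectively.

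First I would compute the invariants $V^{\mathcal{I}}$. Writing $\mathcal{M}=\mathrm{span}_{\bC}\{M_n\}$ and $\mathcal{Y}=\mathrm{span}_{\bC}\{Y_n\}$, the adjoint action gives $M_m\cdot L_n=mM_{m+n}$ while $M_m$ annihilates $\mathcal{Y}$ and $\mathcal{M}$; hence a degree-wise linear-independence argument shows that every tensor in $V$ carrying an $L$-factor is moved by some $M_m$, so $V^{\mathcal{M}}=\mathcal{I}\otimes\mathcal{I}$. Since $\mathcal{Y}$ normalizes $\mathcal{M}$, the space $V^{\mathcal{M}}$ is $\mathcal{Y}$-stable, and as $Y_m\cdot Y_n=(m-n)M_{m+n}$ with $Y_m$ annihilating $\mathcal{M}$, the same kind of argument removes every $Y$-factor. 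This leaves
\begin{equation*}
V^{\mathcal{I}}=\mathcal{M}\otimes\mathcal{M}\cong\mathcal{F}_0\otimes\mathcal{F}_0 ,
\end{equation*}
whence ${\rm H}^1(\W,V^{\mathcal{I}})={\rm H}^1(\W,\mathcal{F}_0\otimes\mathcal{F}_0)$ has dimension $5$ by Theorem \ref{T}; these five inflation classes account for the first summand.

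The heart of the proof is the term ${\rm H}^1(\mathcal{I},V)^{\W}$. Because $\mathcal{I}$ is two-step nilpotent with $[\mathcal{I},\mathcal{I}]=\mathcal{M}$, a derivation $d\colon\mathcal{I}\to V$ is governed by the relations
\begin{equation*}
(m-n)\,d(M_{m+n})=Y_m\cdot d(Y_n)-Y_n\cdot d(Y_m),\qquad Y_m\cdot d(M_n)=M_n\cdot d(Y_m),
\end{equation*}
so $d$ is no longer a mere $\W$-module map, and the clean identification ${\rm H}^1(\mathcal{F}_\al,\,\cdot\,)^{\W}=\Hom_{\W}(\mathcal{F}_\al,\,\cdot\,)$ used in Proposition \ref{PropA} is unavailable. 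I would solve these equations degree by degree, impose the $\mathcal{S}/\mathcal{I}=\W$-action on the resulting cocycles, and factor out inner derivations, expecting exactly two surviving invariant classes --- the analogues of $\tilde\d_0^6,\tilde\d_0^7$ --- built from the two $\W$-homomorphisms $\mathcal{F}_0\to\mathcal{F}_0\otimes\mathcal{F}_0$, namely $M_m\mapsto M_0\otimes M_m$ and $M_m\mapsto M_m\otimes M_0$, whose $Y$-values are then pinned down by the displayed relations.

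Finally, to dispose of the transgression $d_2$ I would simply exhibit two explicit $1$-cocycles of $\mathcal{S}$ with values in $\mathcal{M}\otimes\mathcal{M}$ whose restrictions to $\mathcal{I}$ generate ${\rm H}^1(\mathcal{I},V)^{\W}$; their mere existence places both generators in the image of the restriction map, so $\Ker d_2$ is $2$-dimensional and $\dim{\rm H}^1(\mathcal{S},V)=5+2=7$. The main obstacle is precisely the computation of ${\rm H}^1(\mathcal{I},V)^{\W}$: the nonabelian structure of $\mathcal{I}$ forces one to solve the coupled cocycle equations relating $d(Y_m)$ and $d(M_n)$ and to verify that, after removing coboundaries and imposing $\W$-invariance, no class beyond the expected two survives.
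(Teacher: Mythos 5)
Your strategy coincides with the paper's: the Hochschild--Serre sequence for the ideal $\mathcal{I}$ of $\mathcal{S}$, the identification $V^{\mathcal{I}}=\mathcal{M}\ot\mathcal{M}\cong\mathcal{F}_0\ot\mathcal{F}_0$ contributing $5$ via Theorem \ref{T}, a two-dimensional $\mathrm{H}^1(\mathcal{I},V)^{\W}$, and surjectivity of the restriction map giving $5+2=7$. You are in fact more careful than the paper on one point: since $\mathcal{I}$ is two-step nilpotent rather than abelian, the identity $\mathrm{H}^1(\mathcal{I},V)^{\W}=\Hom_{\W}(\mathcal{I},V)$ used in Proposition \ref{PropA} is indeed unavailable, and the coupled equations linking $d(Y_m)$ and $d(M_n)$ must genuinely be solved; the paper passes over this with ``by direct computation''. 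But you also do not carry this computation out --- you only ``expect'' two surviving classes --- so the crux of the proof remains open in your sketch just as it does in the paper's.

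One step would fail as written: you cannot exhibit $1$-cocycles of $\mathcal{S}$ \emph{with values in} $\mathcal{M}\ot\mathcal{M}$ whose restrictions to $\mathcal{I}$ generate $\mathrm{H}^1(\mathcal{I},V)^{\W}$. Since $\mathcal{I}$ acts trivially on $\mathcal{M}\ot\mathcal{M}$, any cocycle $d$ valued there satisfies $d([x,y])=x\cdot d(y)-y\cdot d(x)=0$ for $x,y\in\mathcal{I}$, hence vanishes on $[\mathcal{I},\mathcal{I}]=\mathcal{M}$; on the other hand the generators do not vanish on $\mathcal{M}$ even after subtracting a coboundary, because $M_0$ is central in $\mathcal{S}$, so $d_v(M_0)=M_0\cdot v=0$ for every $v\in V$ while $\tilde\d_6(M_0)=M_0\ot M_0\neq 0$. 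The correct extensions must leave $\mathcal{M}\ot\mathcal{M}$ on the $Y_m$: for instance $D(L_n)=0$, $D(M_m)=M_0\ot M_m$, $D(Y_m)=\tfrac12\,M_0\ot Y_m$, where the factor $\tfrac12$ is forced by $(m-n)D(M_{m+n})=Y_m\cdot D(Y_n)-Y_n\cdot D(Y_m)$ (this also shows the paper's normalization of $\tilde\d_6(Y_m)$ is off by that factor). With this repair, and with the computation of $\mathrm{H}^1(\mathcal{I},V)^{\W}$ actually completed, your argument yields the stated dimension.
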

\begin{proof}
For $\mathcal I$ is a  ideal of $\mathcal S$,  we have the following exact sequence:
$$
0\to \mathrm{H}^1(\W,(\mathcal S\ot \mathcal S)^{\mathcal I})\to \mathrm{H}^1(\mathcal S,\mathcal S\ot \mathcal S)\to \mathrm{H}^1(\mathcal I,\mathcal S\ot \mathcal S)^{\W}.
$$
Since
$
(\mathcal S\ot\mathcal S)^{\mathcal I}=\{v\in \mathcal S\ot\mathcal S\mid x\cdot v=0,\forall x\in \mathcal I\}=\mathcal{F}_{0}\ot \mathcal{F}_{0}.
$
From Theorem \ref{T} it follows that the dimension of the first space in the exact sequence is $5$.
 By direct computation, we have
$
\mathrm{H}^1(\mathcal I,\mathcal S\ot \mathcal S)^{\W}=\mathbb{C} \tilde\d_6\oplus \mathbb{C}\tilde\d_7,
$
where
\begin{eqnarray*}\label{def-6-3}
&& \tilde\d_{6}(M_{m})=M_{0}\ot M_{m},\quad \tilde\d_{6}(Y_m)=M_0\ot Y_m,\quad \tilde\d_{7}(M_{m})=M_{m}\ot M_{0},\quad \tilde\d_{7}(Y_m)=Y_m\ot M_0.
  \end{eqnarray*}
It is easy to see that $\tilde\d_{0}^6$ and $\tilde\d_{0}^7$ lie in the  image of the restriction map. This completes the proof.
\end{proof}

\begin{coro}
${\rm H}^1(\mathcal S,\mathcal S\wedge \mathcal S )=\bigoplus_{i=1}^3\bC D_i$, where
\begin{eqnarray*}
   && D_{1}(L_{m})=M_{0}\otimes M_{m}-M_{m}\otimes M_{0},\quad D_{2}(L_{m})=mM_{0}\otimes M_{m}-mM_{m}\otimes M_{0},\\
   && D_{3}(M_{m})=M_{0}\ot M_{m}-M_{m}\ot M_{0},\quad  D_{3}(Y_m)=M_0\ot Y_m-Y_m\ot M_0,\\
   &&D_1(M_m)=D_1(Y_m)=D_2(M_m)=D_2(Y_m)=D_3(L_m)=0
   \end{eqnarray*}
   for $m\in\bZ$.
\end{coro}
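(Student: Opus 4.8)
The plan is to deduce the skew-symmetric cohomology directly from the full tensor cohomology computed in the preceding proposition, by splitting off the symmetric part. Over $\mathbb{C}$ one has the $\mathcal{S}$-module decomposition $\mathcal{S}\ot\mathcal{S}=(\mathcal{S}\wedge\mathcal{S})\oplus S^{2}\mathcal{S}$, where $S^{2}\mathcal{S}$ is the symmetric square, and since cohomology is additive in the coefficient module,
\[
\mathrm{H}^1(\mathcal{S},\mathcal{S}\ot\mathcal{S})=\mathrm{H}^1(\mathcal{S},\mathcal{S}\wedge\mathcal{S})\oplus\mathrm{H}^1(\mathcal{S},S^{2}\mathcal{S}).
\]
By the preceding proposition this $7$-dimensional space carries an explicit basis: the five classes obtained by transporting $\d_{0,0}^1,\dots,\d_{0,0}^5$ along the identification $\mathcal{F}_0\cong\mathrm{span}_{\mathbb C}\{M_n\}$ (extended by zero on the ideal $\mathcal{I}$), together with $\tilde\d_6$ and $\tilde\d_7$. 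First I would record these seven representatives explicitly.

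Next I would use the $\mathcal{S}$-module homomorphism $\pi\colon\mathcal{S}\ot\mathcal{S}\to\mathcal{S}\wedge\mathcal{S}$, $x\ot y\mapsto x\ot y-y\ot x$, which is (twice) the projection onto the skew-symmetric summand; postcomposition with $\pi$ sends cocycles to cocycles and descends to (twice) the projection $\mathrm{H}^1(\mathcal{S},\mathcal{S}\ot\mathcal{S})\twoheadrightarrow\mathrm{H}^1(\mathcal{S},\mathcal{S}\wedge\mathcal{S})$. A short computation then shows $\pi\circ\d_{0,0}^5=0$, because $\d_{0,0}^5(L_n)$ is flip-invariant (the sum $\sum_i M_i\ot M_{n-i}$ is unchanged under $i\mapsto n-i$); moreover
\[
\pi\circ\d_{0,0}^1=-\pi\circ\d_{0,0}^3=D_1,\qquad \pi\circ\d_{0,0}^2=-\pi\circ\d_{0,0}^4=D_2,\qquad \pi\circ\tilde\d_6=-\pi\circ\tilde\d_7=D_3,
\]
so that $D_1,D_2,D_3$ span the image of $\pi$, which is all of $\mathrm{H}^1(\mathcal{S},\mathcal{S}\wedge\mathcal{S})$.

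Finally I would pin down the dimension by an eigenvalue count for the flip involution $\sigma$ acting on the $7$-dimensional space: $\sigma$ interchanges the classes of $\d_{0,0}^1\leftrightarrow\d_{0,0}^3$, of $\d_{0,0}^2\leftrightarrow\d_{0,0}^4$, of $\tilde\d_6\leftrightarrow\tilde\d_7$, and fixes $\d_{0,0}^5$. Hence the $(-1)$-eigenspace, which is exactly $\mathrm{H}^1(\mathcal{S},\mathcal{S}\wedge\mathcal{S})$, is spanned by the three linearly independent classes $D_1,D_2,D_3$, while the $(+1)$-eigenspace is $4$-dimensional, their sum recovering all seven dimensions. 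Linear independence of $D_1,D_2,D_3$ (clear, since $D_1,D_2$ are supported on the $L_m$ and $D_3$ on $\mathcal{I}$) and their non-triviality are then automatic. The only point demanding care—the main, if modest, obstacle—is confirming that the skew-symmetrized maps remain non-coboundaries and that no skew class is lost; this is guaranteed precisely because the inclusion $\mathcal{S}\wedge\mathcal{S}\hookrightarrow\mathcal{S}\ot\mathcal{S}$ splits, so the induced map on $\mathrm{H}^1$ is injective and the symmetric classes cannot absorb any skew class.
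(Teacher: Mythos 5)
Your argument is correct and is precisely the one the paper leaves implicit: the corollary is stated without proof, and (as in the analogous corollary for $\W(\al)$) the intended argument is to extract the skew-symmetric part of the explicit $7$-dimensional basis of ${\rm H}^1(\mathcal S,\mathcal S\ot\mathcal S)$ via ``the skew symmetry property,'' which is exactly your flip-involution eigenspace count. Your version is a careful write-up of that route -- the splitting $\mathcal S\ot\mathcal S=(\mathcal S\wedge\mathcal S)\oplus S^2\mathcal S$, the projection $\pi=1-\tau$, and the observation that $\tau$ fixes $[\d_{0,0}^5]$ and swaps the remaining basis classes in pairs -- and it yields the stated representatives $D_1,D_2,D_3$.
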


	\section*{Acknowledgements}
We would like to thank Professor Claude Roger for his valuable comments and Xia Xie for her suggestions. We also greatly appreciate the input from the anonymous reviewers. This research was supported by the National Natural Science Foundation of China (12071405).

\end{document}